\newtheorem{theorem}{Theorem}[section] 
\newtheorem{corollary}[theorem]{Corollary}
\newtheorem{lemma}[theorem]{Lemma}
\newtheorem{proposition}[theorem]{Proposition}
\newtheorem*{proposition*}{Proposition}
\newtheorem*{question*}{Question}
\newtheorem*{theorem*}{Theorem}
\newtheorem*{claim*}{Claim}
\newtheorem*{corollary*}{Corollary}
\theoremstyle{definition}
\theoremstyle{remark}
\newtheorem*{remark*}{Remark}
\newcommand{\R}{\mathbb{R}}\newcommand{\N}{\mathbb{N}}
\newcommand{\Z}{\mathbb{Z}}\newcommand{\Q}{\mathbb{Q}}
\newcommand{\D}{\mathbb{D}}
\renewcommand{\S}{\mathbb{S}}
  \def\cG{\mathcal{G}}  
  \def\cH{\mathcal{H}}
\def\cE{\mathcal{E}}
\DeclareMathOperator{\Diff}{Diff}
\DeclareMathOperator{\Ham}{Ham}
\newcommand{\ka}{\ensuremath{\kappa}}
\begin{document}

\author[P. Le Calvez]{Patrice Le Calvez}
\address{Institut de Math\'ematiques de Jussieu-Paris Rive Gauche, IMJ-PRG, Sorbonne Universit\'e, Universit\'e Paris-Diderot, CNRS, F-75005, Paris, France \enskip \& \enskip Institut Universitaire de France}
\curraddr{}
\email{patrice.le-calvez@imj-prg.fr}

\author[M. Sambarino]{Mart\'{\i}n Sambarino$^*$}
\address{CMAT, Facultad de Ciencias, Universidad de la Rep\'ublica, Uruguay.}
\curraddr{Igua 4225 esq. Mataojo. Montevideo, Uruguay.}
\email{samba@cmat.edu.uy}
\thanks{$^*$ Partially supported by CSIC Group 618, Universidad de la Rep\'ublica}
\title[Non contractible periodic orbits]{Non contractible periodic orbits for generic Hamiltonian diffeomorphisms of surfaces}

\begin{abstract}
Let $S$ be a closed surface of genus $g\geq 1$, furnished with an area form $\omega$. We show that there exists an open and dense set ${\mathcal O_r}$ of the space of Hamiltonian diffeomorphisms of class $C^r$, $1\leq r\leq\infty$, endowed with the $C^r$-topology, such that every $f\in \mathcal O_r$ possesses infinitely many non contractible periodic orbits. We obtain a positive answer to a question asked by Viktor Ginzburg and Ba\c{s}ak G\"{u}rel. The proof is a consequence of recent previous works of the authors \cite{LecSa}. \end{abstract}

\maketitle

\bigskip
\noindent {\bf Keywords:} Hamiltonian diffeomorphism, homoclinic class, rotation vector, Lefschetz formula.

\bigskip
\noindent {\bf MSC 2020:}  37C05, 37C20, 37C25, 37C29, 37E30, 37E45, 37J12

\tableofcontents
\section{Introduction}

\subsection{The main theorem}\label{ss.maintheorem}

In this article, $S$ will denote a smooth compact boundaryless orientable surface of genus $g\geq 1$, furnished with a smooth area form $\omega$.  We will assume for conveniency that $\int_S\omega=1$. For $1\leq r\leq \infty$, denote $\Diff^r_{\omega}(S)$ the set of $C^r$-diffeomorphisms of $S$ preserving $\omega$ endowed with the $C^r$-topology, and $\Diff^r_{\omega, *}(S)$ the connected component of $\Diff^r_{\omega}(S)$ that contains the identity map. It is well known that $\Diff^r_{\omega, *}(S)$ is path connected: for every $f\in \Diff^r_{\omega, *}(S)$, there exists a continuous path $I=(f_t)_{t\in[0,1]}$ in $\Diff^r_{\omega}(S)$ that joins the identity to $f$. Such a path is called an {\it identity isotopy} of $f$. 

First, we will recall the definition of the {\it rotation vector} of a finite Borel measure $\mu$ of $S$ invariant by $f$, when $f\in\Diff^r_{\omega, *}(S)$ and  $I=(f_t)_{t\in[0,1]}$ is an identity isotopy of $f$ (see \cite{Mats}, \cite {Po} or \cite{Sc}). Let us define the {\it trajectory} of a point $z\in S$ to be the path $I(z): t\mapsto f_t(z)$. If $\alpha$ is a closed $1$-form  and if $\gamma:[0,1]\to S$ is a $C^1$-path homotopic to $I(z)$ relative to the endpoints, then  $\int_{\gamma} \alpha$ does not depend on $\gamma$ and will be denoted $\int_{I(z)} \alpha$. This quantity depends only on the homotopy class of $I$ (relative to the endpoints). One defines a real valued morphism $\alpha\to \int_S \left(\int_{I(z)} \alpha\right) \, d\mu(z)$ on the space of closed $1$-forms that vanishes on the space of exact $1$-forms (because $\mu$ is invariant by $f$). Passing to the quotient space, one gets in that way a natural linear form on the cohomology group $H^1(S, \R)$. One obtains by duality a homology class $\mathrm{rot}_{I}(\mu)\in H_1(S,\R)$, which is the rotation vector of $\mu$. In the case where $g\geq 2$, it is known that $\Diff^r_{\omega, *}(S)$ is simply connected. Consequently $\mathrm{rot}_{I}(\mu)$ does not depend on $I$ and can be written $\mathrm{rot}_{f}(\mu)$.  In the case where $g=1$, which means that $S$ is the $2$-torus, $\mathrm{rot}_{I}(\mu)$ depends on $I$ but  $$\mathrm{rot}_{f}(\mu)= \mathrm{rot}_{I}(\mu)+H_1(S,\Z)\in H_1(S,\R)/H_1(S,\Z)$$ does not. The set ${\mathcal M}(f)$ of Borel probability measures of $S$ invariant by $f$ is non empty, convex and compact for the weak$^*$-topology. So, the set $\{\mathrm{rot}_I(\mu)\,\vert \enskip \mu\in {\mathcal M}(f)\}$ is a non empty compact convex subset of $H_1(S,\R)$ called the {\it rotation set}.

 An interesting case is the case where $\mu=\mu_O$ is a probability measure supported on a periodic orbit $O$. We will speak about rotation vector of $O$ and will use the notations $\mathrm{rot}_{I}(O)$ and $\mathrm{rot}_{f}(O)$ instead of $\mathrm{rot}_{I}(\mu_O)$ and $\mathrm{rot}_{f}(\mu_O)$. Another interesting case is the case where $\mu=\mu_{\omega}$ is the Borel measure naturally defined by $\omega$. Say that $I$ is {\it Hamiltonian} if $\mathrm{rot}_{I}(\mu_{\omega})=0$. It is well known that $I$ is Hamiltonian if and only if $I$ is homotopic to an identity isotopy $I'=(f'_t)_{t\in[0,1]}$ satisfying the following:
 There exists a family $(H_t)_{t\in[0,1]}$ of functions of class $C^{r+1}$ and a time dependent vector field $(X_t)_{t\in[0,1]}$ of class $C^r$ such that
\begin{itemize}
\item for every $z\in S$, every $v\in T_zS$ and every $t\in [0,1]$,  it holds that  $dH_t(z).v=\omega(v, X_t(z))$,
\item for every $z\in S$ it holds that ${d\over dt} f'_t(z)= X_t(f'_t(z))$.
\end{itemize}
Say that $f\in\Diff^r_{\omega, *}(S)$ is Hamiltonian if $\mathrm{rot}_{f}(\mu_{\omega})=0$. In the case where $g\geq 2$, it means that every identity isotopy of $f$ is Hamiltonian; in the case where $g=1$ it means that there exists an identity isotopy of $f$ that is Hamiltonian. In both cases $g\geq 2$ and $g=1$ one proves easily that the map $f\mapsto \mathrm{rot}_{f}(\mu_{\omega})$, defined on $\Diff^r_{\omega, *}(S)$, is continuous. Consequently, the space of Hamiltonian diffeomorphisms endowed with the $C^r$-topology, denoted  $\Ham^r_{\omega}(S)$, is a Baire space. 

Let us state the main result of the article

\begin{theorem}\label{t.main}
 For $1\leq r\leq \infty$, there exists an open and dense set ${\mathcal O}_r\subset\Ham^r_{\omega}(S)$ such that if $f\in{\mathcal O}_r$, there exist infinitely many periodic orbits $O$ such that $\mathrm{rot}_{f}(O)\not=0$.
\end{theorem}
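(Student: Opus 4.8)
The plan is to reduce the statement to the production of a \emph{robust horseshoe carrying two periodic orbits with distinct rotation vectors}, and then to invoke the authors' earlier work \cite{LecSa} to obtain such a configuration on an open dense set. First I would record the elementary but crucial fact that, since $\mu_\omega$ is an $f$-invariant probability measure with $\mathrm{rot}_f(\mu_\omega)=0$, the class $0$ always lies in the rotation set; thus the real issue is to certify that \emph{some} periodic orbit has rotation vector away from $0$, and then to propagate this into infinitely many orbits.

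The openness and the counting form the soft half of the argument. Suppose $f$ admits a transverse homoclinic intersection associated to a hyperbolic periodic point $p$, giving a horseshoe $\Lambda$, and suppose $\Lambda$ contains two periodic orbits $O_0,O_1$ with $\mathrm{rot}_f(O_0)\neq\mathrm{rot}_f(O_1)$. The rotation set of $\Lambda$, being the image of the convex compact set of invariant measures under the affine map $\mu\mapsto\mathrm{rot}_f(\mu)$, is a compact convex subset of $H_1(S,\R)$ containing the nondegenerate segment $[\mathrm{rot}_f(O_0),\mathrm{rot}_f(O_1)]$. Rotation vectors of periodic orbits of $\Lambda$ are rational and dense in this rotation set, so infinitely many distinct ones occur along that segment; at most one of them is $0$, so infinitely many periodic orbits $O$ satisfy $\mathrm{rot}_f(O)\neq0$. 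Because we work inside the Baire space $\Ham^r_\omega(S)$, every nearby $g$ is Hamiltonian, and structural stability of the horseshoe together with the continuity of the rotation vectors of the continuations $O_0^g,O_1^g$ shows that $\mathrm{rot}_g(O_0^g)\neq\mathrm{rot}_g(O_1^g)$ persists; hence the set of $f$ realizing this configuration is open and satisfies the conclusion.

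For density I would proceed in two stages. Using \cite{LecSa} I would first place, by an arbitrarily small $C^r$-perturbation, the diffeomorphism into a class for which there is a hyperbolic periodic point $p$ whose homoclinic class is nontrivial, that is, carries a transverse homoclinic orbit and hence a horseshoe. It then remains to arrange that this horseshoe contains two periodic orbits of distinct rotation vectors. Here I would exploit that $g\geq1$, so $H_1(S,\R)\neq0$: along a transverse homoclinic orbit one may perform a further small perturbation, supported in a disc met only once by the relevant arcs of the stable and unstable manifolds, which drags that arc around a chosen nonseparating simple closed curve. This alters the free homotopy class of the associated homoclinic loop, so that the resulting horseshoe contains orbits whose rotation vectors differ by a prescribed nonzero class of $H_1(S,\Z)$. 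A Lefschetz-number computation, carried out relative to the prescribed free homotopy classes (equivalently, in an appropriate finite cover), certifies that the perturbed map genuinely possesses periodic points realizing the intended homological data, so the configuration of the previous paragraph is attained arbitrarily close to $f$.

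The main obstacle is precisely this density step: producing, by a $C^r$-small perturbation that preserves the Hamiltonian character, two periodic orbits with honestly \emph{distinct} rotation vectors inside the homoclinic class, rather than merely a nontrivial tangle. This is where the full force of \cite{LecSa} — the existence and persistence of homoclinic classes for generic Hamiltonian surface diffeomorphisms — and the Lefschetz-formula bookkeeping of periodic points in prescribed free homotopy classes are indispensable; once the two orbits are in hand, the passage to infinitely many non contractible periodic orbits and the openness of the property follow from structural stability and from the convexity of the rotation set, as explained above.
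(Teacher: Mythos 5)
Your first half (the ``soft'' step) is essentially the same as the paper's: a rotational horseshoe attached to a non-contractible transverse intersection produces infinitely many periodic orbits with non-zero rotation vector, and the configuration is $C^1$-open, hence $C^r$-open inside $\Ham^r_{\omega}(S)$; this is exactly the role of Proposition \ref{p.rotationhorseshoe} in the paper (which in fact gives the more precise conclusion $\mathrm{rot}_I(O_{p/q})=(p/q)[T]$). The genuine gap is in your density step. You propose to start from a map with a transverse homoclinic tangle (which \cite{LecSa} does provide generically) and then, by a further $C^r$-small perturbation supported in a disc, to ``drag'' an arc of the unstable manifold around a nonseparating simple closed curve so as to change the free homotopy class of the homoclinic loop. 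This cannot work as described: changing the homotopy class of a homoclinic connection is a macroscopic topological modification --- the arc must travel a definite distance around the surface --- and is therefore incompatible with the perturbation being $C^r$-small (or even $C^0$-small). Nothing in your sketch reconciles the required smallness with the required homotopical effect, and no closing-lemma-type technique does this in general: a small perturbation can create an intersection only where the two branches already pass close to each other \emph{in the correct homotopical position}, which is precisely what has to be established. You are also attributing to \cite{LecSa} something it does not contain: that paper produces transverse homoclinic points, but says nothing about the homology classes of the resulting homoclinic loops.

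Establishing that homological non-triviality is exactly the new content of the present paper, and it is proved \emph{without any perturbation}: Proposition \ref{p.keyresult} shows that for every $f\in\cH^r_{\omega}(S)$ (indeed for every $f\in\cG^r_{\omega}(S)$ with more than $2g-2$ periodic points) the non-contractible configuration \emph{already exists}. The mechanism is global and homological rather than perturbative: if every class $\kappa$ containing a contractible hyperbolic fixed point with no reflection sat inside a periodic regular domain $V$ with $\iota_*(H_1(V,\R))=\{0\}$, then the Lefschetz formula for regular domains (Lemma \ref{l: lefschetzformula}), applied to compact approximations of disjoint minimal regular neighborhoods of these classes, would force the complementary pieces to be annuli and $S$ to be a sphere --- a contradiction (Lemma \ref{l:nonzerohomology}); then a two-sheeted covering argument (Lemma \ref{l:homoclinic}) converts the non-vanishing of $\iota_*(H_1(\kappa,\R))$ into a deck transformation $T$ with $[T]\not=0$ and a transverse intersection of $\Lambda_u$ with $T(\Lambda_s)$. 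Density of your open set then costs nothing, because it contains the residual set $\cH^r_{\omega}(S)$. So the correct architecture is ``generic $\Rightarrow$ the configuration is automatically present, and the configuration is open,'' not ``perturb to create the configuration''; as written, your density step is the one piece of the argument that fails.
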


\begin{remark*} The result above was already proved by Tal and the first author in the case of the torus (see \cite{LecTal}). The present proof is valid for surfaces of higher genus, but works also for the torus. In this last case, it is completely different from the original proof, that used  the notion of transverse foliation.
\end{remark*}

\subsection{A more precise statement}\label{ss.maintheorem}

Let $f$ be a diffeomorphism in $\Diff_\omega^r(S)$ and let $z$ be a periodic point of period $q$\footnote{In the article ``period'' means ``smallest period''.}. We say that $z$ is {\it elliptic} if $Df^q(z)$ has two non real complex eigenvalues of modulus one, and that $z$ is {\it hyperbolic} if $Df^q(z)$ has two real eigenvalues of modulus different from one. In the last case, one can define the {\it stable manifold} $W^s(z)$ and the {\it unstable manifold} $W^u(z)$, setting
$$\begin{aligned}W^s(z)&=\{z'\in S\, \vert\, \lim_{n\to+\infty} f^{nq}(z')=z\}, \\ W^u(z)&=\{z'\in S\, \vert\, \lim_{n\to+\infty} f^{-nq}(z')=z\}.\end{aligned}$$
Both sets $W^s(z)$ and $W^u(z)$ are the images of $C^r$ immersions of $\R$. A {\it stable branch} $\Lambda_s$ is a connected component of $W^s(z)\setminus\{z\}$ and an {\it unstable branch} $\Lambda_u$ a connected component of $W^u(z)\setminus\{z\}.$ We refer as a branch any of the stable or unstable branches. If $z$ is a hyperbolic periodic point of period $q$, there are two possibilities. In the first case, the eigenvalues of $Df^q(z)$ are positive and the four branches are invariant by $f^q$: we will say that $z$ is a {\it hyperbolic periodic point with no reflection}. In the second case, the eigenvalues of $Df^q(z)$ are negative and the stable branches are exchanged by $f^q$ as the unstable branches: we will say that $z$ is a {\it hyperbolic periodic point with reflection}. A hyperbolic periodic point has a {\it transverse homoclinic intersection} if there is a stable branch that meets an unstable branch in a transversal way. In that case there are infinitely many hyperbolic periodic points, due to the presence of a horseshoe (see \cite{Sm}).

 We denote $\cG^r_{\omega}(S)$ the set of diffeomorphisms $f\in\mathrm{Diff}^r_\omega(S)$ satisfying the following conditions.

\begin{itemize}

\item[(G1):]\label{n.genericity} Every periodic point is either elliptic or hyperbolic. Moreover, if $z$ is an elliptic periodic point of period $q$, then the eigenvalues of $Df^q(z)$ are not roots of unity.

 \item[(G2):]\label{h.genericity}  Stable and unstable branches of hyperbolic points that intersect must also intersect transversally.
  \item[(G3):]\label{e.genericity} If $U$ is a neighborhood of an elliptic periodic point $z$, then there
is a topological closed disk $D$ containing $z$, contained in $U$, and bordered by finitely
many pieces of stable and unstable manifolds of some hyperbolic periodic
point $z'$.
\end{itemize}
Moreover we define $$\cH^r_{\omega}(S)=\cG^r_{\omega}(S)\cap\Ham^r_{\omega}(S).$$
Note that if $f$ belongs to $\cG^r_{\omega}(S)$ it is also the case for every $f^k$, $k\not =0$. We have a similar result for $\cH^r_{\omega}(S)$.

Robinson \cite{R} proved that, for any $r \ge 1$, properties (G1) and (G2) are $C^r$-generic  (it is easy to see that the no unity root condition is generic among elliptic periodic points), and (G3) is $C^r$-generic due to Zehnder \cite{Z}. Thus $\cG^r_{\omega}(S)$ is residual in $\Diff^r_\omega(S).$ The perturbations that permit to approximate an element $f\in \Diff^r_\omega(S)$ by an element of  $\cG^r_{\omega}(S)$ are local perturbations (supported on disks). Consequently, one can approximate an element $f\in \Ham^r_\omega(S)$ by an element of  $\cH^r_{\omega}(S)$. In other words, one can affirm that 
 $\cH^r_{\omega}(S)$ is residual in $\Ham^r_\omega(S)$.
 
By an immediate consequence of Lefschetz' formula, that will be explained in Section \ref{ss.lefschtez}, one knows that a diffeomorphism $f\in  \cG^r_{\omega}(S)$ isotopic to the identity has at least $2g-2$ hyperbolic fixed points with no reflection, with a strict inequality if some fixed points are elliptic or hyperbolic with reflection. More generally, every diffeomorphism $f\in  \cG^r_{\omega}(S)$ has at least $2g-2$ hyperbolic periodic points, with a strict inequality if some periodic points are elliptic (see \cite{LecSa} for details). Let us state now one of the main results of \cite{LecSa}.

\begin{theorem}\label{t.numberhyp} If $f\in \cG^r_{\omega}(S)$ has more than $2g-2$ periodic points, then every hyperbolic periodic point has a transverse homoclinic intersection.

\end{theorem}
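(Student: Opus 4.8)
The plan is to prove the contrapositive: assuming that some hyperbolic periodic point $z_0$ of $f\in\cG^r_\omega(S)$ has \emph{no} transverse homoclinic intersection, I would show that $f$ has exactly $2g-2$ periodic points. Since the invariant manifolds and the notion of transverse homoclinic intersection are defined through $f^q$ (where $q$ is the period), and since $f^q\in\cG^r_\omega(S)$ has the same periodic points and the same homoclinic intersections as $f$, I may assume $z_0$ is fixed. Condition (G2) forbids every non-transverse intersection of branches, in particular every saddle connection (a coincidence of a stable and an unstable branch is non-transverse); combined with the hypothesis on $z_0$, this gives $W^s(z_0)\cap W^u(z_0)=\{z_0\}$, so each branch of $z_0$ is an injectively immersed line whose closure is a compact $f$-invariant continuum carrying no homoclinic return to $z_0$.

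Next I would exploit the hypothesis of more than $2g-2$ periodic points together with the Lefschetz count recalled above. That count says $f$ has at least $2g-2$ hyperbolic periodic points with no reflection, with strict inequality as soon as an elliptic point, a hyperbolic point with reflection, or an additional saddle is present; so the hypothesis guarantees at least one such extra periodic point. If it is elliptic, condition (G3) encloses it in a topological disk whose boundary is made of finitely many pieces of stable and unstable branches of a single hyperbolic point $z'$; closing up such a disk forces these branches to cross, and by (G2) the crossings are transverse. A transverse crossing of a stable and an unstable branch of the \emph{same} orbit $z'$ is a transverse homoclinic intersection, and an elementary inclination (lambda) lemma argument upgrades the remaining (heteroclinic) crossings to the same conclusion. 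Thus the presence of the extra point produces, somewhere in the system, a transverse homoclinic intersection and hence a horseshoe (\cite{Sm}) with its nontrivial homoclinic class.

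The heart of the proof is then to \emph{propagate} this richness back to $z_0$. I would argue that, because $f$ preserves $\omega$, there are no attracting or repelling sets and Poincar\'e recurrence forces the limit set of each branch of $z_0$ to carry nontrivial recurrence; in the generic class $\cG^r_\omega(S)$ this recurrence must meet the invariant manifolds generated by the horseshoe above. Using the inclination lemma on the hyperbolic set of the horseshoe, any accumulation of an unstable branch of $z_0$ on its stable manifolds yields a transverse crossing, and the symmetric statement for the stable branches of $z_0$; transitivity of the homoclinic class then returns such crossings to $z_0$ itself, so that $W^u(z_0)$ and $W^s(z_0)$ must intersect transversally. This contradicts the standing assumption on $z_0$. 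Consequently no extra periodic point can exist, every periodic point is a hyperbolic no-reflection point of index $-1$, and the Lefschetz formula $\sum_z i(f,z)=\chi(S)=2-2g$ pins their number to exactly $2g-2$, contradicting the hypothesis.

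The main obstacle is precisely the propagation step: controlling the closures of the branches of a saddle that carries no homoclinic intersection, and showing that once \emph{any} nontrivial homoclinic class is present every hyperbolic periodic point is forced to acquire a transverse homoclinic intersection of its own. This requires combining area-preservation (no wandering, invariant measures supported on the branch limit sets) with a careful, repeated use of the inclination lemma across the genericity conditions (G1)--(G3), and I expect it to absorb essentially all of the work; the concluding index computation is comparatively routine.
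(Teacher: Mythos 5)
Your proposal is not a proof; the two steps you yourself flag as "the heart" are precisely where it breaks down, and the correct mechanism is of a different nature. Note first that the present paper does not prove Theorem \ref{t.numberhyp} at all: it imports it from \cite{LecSa}, and inside the paper's own logic it is the combination of Proposition \ref{p.localLefschetz} (every class is infinite, and every stable branch of a point meets every unstable branch of any equivalent point, in particular of the point itself) with condition (G2), which upgrades those intersections to transverse ones. Against that background, the parts of your sketch that are correct (reduction to a fixed point, (G2) excluding non-transverse intersections, and the elliptic case: (G3) produces a disk bordered by pieces of stable and unstable manifolds of a single hyperbolic point $z'$, whose corners are homoclinic points, transverse by (G2) --- this is Zehnder's argument \cite{Z}) are the easy parts. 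The first genuine gap is your case analysis: having more than $2g-2$ periodic points does \emph{not} furnish an elliptic point. Your index count silently assumes $f$ is isotopic to the identity (so that $\sum_z i(f,z)=2-2g$) and, worse, Lefschetz counts fixed points of each power, not periodic points, so "extra point" can perfectly well mean extra hyperbolic points of various periods. In that all-hyperbolic situation you produce no transverse homoclinic intersection anywhere in the system, and this is exactly the hard case: the borderline examples with no homoclinic intersections (time-one maps of translation flows in a minimal direction, recalled at the end of Section \ref{ss:homoclinic classes}) have \emph{only} hyperbolic periodic points, so the whole content of the theorem is to exclude all-hyperbolic configurations with more than $2g-2$ points.

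The second, fatal gap is the "propagation" step, whose guiding intuition is false. Area preservation and recurrence do not force the branches of $z_0$ to meet the invariant manifolds coming from the horseshoe: distinct classes can be separated by pairwise disjoint invariant periodic regular domains (Propositions \ref{p.regulardomain-class} and \ref{p.separationconsequences}), so the inclination lemma has nothing to act on. Moreover, the translation-flow example shows that a branch can be recurrent, indeed dense in $S$, while never crossing any stable manifold --- its closure is a minimal lamination-like set --- so "nontrivial recurrence of the limit set" simply cannot yield transverse intersections. What actually closes the argument in \cite{LecSa} is a counting argument, not a dynamical collision: by Mather's theorem (Theorem \ref{t.genericconsequences2}) all four branches of $z_0$ have the same closure $K$; if the class of $z_0$ were not homoclinic, the complementary components of $K$ are periodic regular domains, their prime-end rotation numbers are irrational (Theorem \ref{t.pe-genericconsequences}), hence the ends have Lefschetz index $1$ (Corollary \ref{c.e-genericconsequences}), and a Lefschetz formula localized on these domains (in the spirit of Lemma \ref{l: lefschetzformula}) caps the total number of periodic points at $2g-2$. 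Your proposal never engages with this machinery --- regular domains, prime ends, closures of branches --- and that machinery is essentially all of the work.
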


 As a consequence of Arnold's conjecture, one knows that  every diffeomorphism $f\in\cH^r_{\omega}(S)$ has at least $2g+2$ fixed points (see Floer \cite{Fl} or Sikorav \cite{Si}). Consequently we have:

\begin{corollary}\label{c.numberhyphamiltonian}
If $f\in \cH^r_{\omega}(S)$, then every hyperbolic periodic point of $f$ has a transverse homoclinic intersection.\footnote{A similar result was announced by Xia \cite{X2}}
\end{corollary}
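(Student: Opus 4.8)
The plan is to obtain the corollary as an immediate consequence of Theorem \ref{t.numberhyp}, the only new ingredient being the lower bound on the number of fixed points supplied by Arnold's conjecture. First I would unwind the definition: since $\cH^r_{\omega}(S)=\cG^r_{\omega}(S)\cap\Ham^r_{\omega}(S)$, any $f\in\cH^r_{\omega}(S)$ simultaneously lies in $\cG^r_{\omega}(S)$ and is Hamiltonian. The external input is the resolution of Arnold's conjecture for surfaces (Floer \cite{Fl}, Sikorav \cite{Si}), which guarantees that a Hamiltonian diffeomorphism of $S$ possesses at least $2g+2$ fixed points. As fixed points are in particular periodic points (of period $1$), $f$ therefore has at least $2g+2$ periodic points.

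The decisive step is then the elementary comparison $2g+2>2g-2$, valid for every $g$, so that $f$ has strictly more than $2g-2$ periodic points. This is precisely the hypothesis of Theorem \ref{t.numberhyp}: we have $f\in\cG^r_{\omega}(S)$ together with more than $2g-2$ periodic points. Invoking that theorem directly yields that every hyperbolic periodic point of $f$ admits a transverse homoclinic intersection, which is exactly the assertion of the corollary.

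I do not expect any genuine obstacle in this deduction. All of the dynamical content is carried either by Theorem \ref{t.numberhyp}, already established in \cite{LecSa}, or by the count of fixed points coming from Arnold's conjecture, which is an external result. What remains is purely formal: recording the trivial inequality $2g+2>2g-2$ and the remark that fixed points are counted among the periodic points, so that the quantitative threshold of Theorem \ref{t.numberhyp} is crossed and the conclusion follows at once.
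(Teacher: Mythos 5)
Your proposal is correct and coincides with the paper's own deduction: the text preceding the corollary invokes Arnold's conjecture (Floer \cite{Fl}, Sikorav \cite{Si}) to get at least $2g+2$ fixed points for any $f\in \cH^r_{\omega}(S)$, and since $2g+2>2g-2$ and $f\in\cG^r_{\omega}(S)$, Theorem \ref{t.numberhyp} applies verbatim. There is nothing to add; this is exactly the intended argument.
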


To obtain Theorem \ref{t.main} we will look more carefully at the homoclinic intersections appearing in Corollary \ref{c.numberhyphamiltonian}. Let us mention first some classical facts.

We denote by $\tilde S$ the universal covering space of $S$ and by $\tilde\pi:\tilde S\to S$ the covering projection. We denote by $G$ the group of covering automorphisms. For every $T\in G$, we write $[T]\in H_1(S,\Z)$ for the image of $T$ by the Hurewicz morphism: if $\tilde\gamma$ is a continuous path joining a point $\tilde z\in \tilde S$ to $T(\tilde z)$, then $[T]$ is the homology class of the cycle $\tilde\pi\circ\tilde\gamma$.

Suppose that $f$ is a homeomorphism of $S$ isotopic to the identity and consider an identity isotopy $I=(f_t)_{t\in[0,1]}$ of $f$. The isotopy $I$ can be lifted in $\tilde S$ to an identity isotopy of a certain lift $\tilde f_I$ of $f$. Denote $\mathrm{fix}_{I}(f)$ the set of fixed points of $f$ such that the loop $I(z):t\mapsto f_t(z)$ is homotopic to zero. It coincides with the set of fixed points of $f$ that are lifted to fixed points of $\tilde f_I$. In the case where $g\geq 2$, the map $f_I$ does not depend on $I$, it is called the {\it canonical lift} $\tilde f$ of $f$ and denoted by $\tilde f$. This map commutes with every $T\in G$. The fixed points of $f$ that are lifted to fixed points of $\tilde f$ are called {\it contractible fixed points} and the set of such points is denoted by $\mathrm{fix}_{\mathrm{cont}}(f)$.   In the case where $g=1$, for every fixed point $z$ of $f$, there exists an identity isotopy $I$ of $f$ such that $z\in\mathrm{fix}_{I}(f)$. In that situation, any lift of $f$ to $\tilde S$ commutes with every $T\in G$.

As it will explained in Section \ref{ss.lefschtez}, if $f\in \cG^r_{\omega}(S)$ and if $I$ is an identity isotopy of $I$, then $\mathrm{fix}_{I}(f)$ contains at least $2g-2$ hyperbolic fixed points with no reflection, and more in case it contains an elliptic or a hyperbolic fixed point with reflection. Remind also that if  $f\in \cH^r_{\omega}(S)$ and if $I$ is Hamiltonian, then $\mathrm{fix}_{I}(f)$ contains at least $2g+2$ points and among them at least $2g$ hyperbolic fixed points with no reflection. In this situation $\tilde f_I$ will be  the canonical lift of $f$ and the points of $\mathrm{fix}_{I}(f)$ the contractible fixed points.

The next result is very classical, related to the existence of a {\it rotational horseshoe} (see \cite{AroChHaMcg}, \cite{HocHol} or \cite{Lev}):
\begin{proposition}\label{p.rotationhorseshoe}
Consider $f\in \Diff^r_{\omega, *}(S)$ and an identity isotopy $I$ of $f$. Suppose that there exists a hyperbolic fixed point $z\in \mathrm{fix}_I(f)$ with no reflection, a lift $\tilde z\in \tilde S$ of $z$ and $T\in G\setminus\{\mathrm{Id}\}$, such that there exists an unstable branch of $\tilde z$ that intersects transversally a stable branch of $T(\tilde z)$. Then there exists $a>0$ and for every $p/q\in(0,a)$ written in an irreducible way, a  point $\tilde z_{p/q}$ such that $\tilde f_I^q(\tilde z_{p/q})= T^p(\tilde z_{p/q})$. 
 \footnote{The fact that $f$ preserves $\omega$ is not necessary in the statement}
\end{proposition}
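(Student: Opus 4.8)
The plan is to produce a \emph{rotational horseshoe} by the classical Smale--Birkhoff mechanism, carried out $T$-equivariantly in $\tilde S$. Since $z$ is a hyperbolic fixed point of $f$ belonging to $\mathrm{fix}_I(f)$, its lift $\tilde z$ is fixed by $\tilde f_I$; the \emph{no reflection} hypothesis guarantees that $D\tilde f_I(\tilde z)$ has two positive eigenvalues $0<\lambda<1<\mu$, so the four local branches are invariant and orientation is preserved along the local stable and unstable manifolds. Because $\tilde f_I$ commutes with $T$ and $\tilde f_I(\tilde z)=\tilde z$, the translate $T(\tilde z)$ is also a fixed point of $\tilde f_I$, with $W^s(T(\tilde z))=T(W^s(\tilde z))$ and $W^u(T(\tilde z))=T(W^u(\tilde z))$. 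I would first choose linearizing coordinates near $\tilde z$ in which the local stable and unstable manifolds are the coordinate axes, and fix a small adapted rectangle $R$ around $\tilde z$ (horizontal sides along $W^s$, vertical sides along $W^u$). By equivariance the same picture holds around each $T^k(\tilde z)$ with rectangle $T^k(R)$.

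Next I would set up two \emph{covering relations} (correctly aligned windows in the sense of Conley--Moser). The \emph{stay} relation comes from the local hyperbolicity: a full vertical (unstable-direction) strip of $R$ is mapped by $\tilde f_I$ across $R$, top to bottom, contributing one iterate and no translation. The \emph{jump} relation comes from the transverse heteroclinic point $\tilde w\in W^u(\tilde z)\cap W^s(T(\tilde z))$: since $\tilde f_I^{m}(\tilde w)\to T(\tilde z)$, the inclination (lambda) lemma applied first near $\tilde z$ and then near $T(\tilde z)$ shows that, for a suitable integer $N$, a full vertical strip of $R$ is carried by $\tilde f_I^{N}$ onto a full vertical crossing of $T(R)$, contributing $N$ iterates and one factor $T$. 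Applying $T^{k}$ transports both relations to every box, giving, for all $k\in\Z$, the admissible transitions $T^k(R)\Rightarrow T^k(R)$ (one step) and $T^k(R)\Rightarrow T^{k+1}(R)$ ($N$ steps). I then set $a=1/N$.

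Finally I would realize each rotation number combinatorially and close it up. Given $p/q\in(0,1/N)$ irreducible, one has $s:=q-pN>0$; consider the periodic itinerary that performs $p$ jumps and $s$ stays per period, so that one period uses exactly $pN+s=q$ iterates of $\tilde f_I$ and accumulates net translation $T^{p}$. The cleanest way to obtain an honest orbit is to pass to the quotient $\hat S=\tilde S/\langle T\rangle$, on which $\tilde f_I$ descends to a map $\hat f$ possessing a hyperbolic fixed point $\bar z$ with a transverse homoclinic point $\bar w$ whose homoclinic loop winds once in the $T$-direction; Smale--Birkhoff then yields an invariant set conjugate to a shift, and the chosen periodic itinerary produces a periodic point $\bar x$ of period $q$ and winding $p$. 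Lifting $\bar x$ to a point $\tilde z_{p/q}\in R$ gives precisely $\tilde f_I^{q}(\tilde z_{p/q})=T^{p}(\tilde z_{p/q})$, as required. The main obstacle is the second step: rigorously establishing the jump relation as a \emph{full} Markov crossing via the inclination lemma and verifying the cone/correct-alignment conditions that make every concatenation of stay and jump windows realizable by an actual orbit (and periodic itineraries by periodic orbits); once these covering relations are in place, the bookkeeping producing exactly the ratio $p/q$ and the closing-up are routine.
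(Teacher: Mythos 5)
Your proposal is correct in outline, but there is nothing in the paper to compare it against step by step: the authors state Proposition \ref{p.rotationhorseshoe} as a classical fact, with no proof beyond the references \cite{AroChHaMcg}, \cite{HocHol}, \cite{Lev} on rotational horseshoes, and your construction (equivariant ``stay'' and ``jump'' crossings near $\tilde z$ and its $T$-translates, the jump coming from the transverse intersection via the inclination lemma, then realization of periodic itineraries in the annulus $\tilde S/\langle T\rangle$) is precisely the classical argument those references contain. One point in your last paragraph deserves care, because as phrased it would not work: the literal Birkhoff--Smale theorem produces a Cantor set on which some fixed power $\hat f^{M}$ is conjugate to the full $2$-shift, and with a fixed power every realizable relation has the form $\tilde f_I^{kM}(\tilde x)=T^{j}(\tilde x)$, forcing the denominator $q$ to be a multiple of $M$; this misses most irreducible $p/q\in(0,a)$. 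What makes your argument go through is exactly the variable-time Markov structure of your second paragraph --- a stay costs one iterate and no deck transformation, a jump costs $N$ iterates and one factor $T$ --- combined with a Conley--Moser/covering-relation realization theorem that closes periodic admissible sequences of windows into genuine orbits; your bookkeeping $q=pN+s$ only makes sense in that setting, so the closing-up step should invoke that realization theorem rather than the shift conjugacy for a power. Note also that it is the pairwise disjointness of the boxes $T^{k}(R)$ (available since $\langle T\rangle$ acts freely and properly discontinuously, taking $R$ small) that converts ``total winding $p$ along the itinerary'' into the exact identity $\tilde f_I^{q}(\tilde z_{p/q})=T^{p}(\tilde z_{p/q})$ rather than $T^{j}$ for some other $j$. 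Finally, for contrast: Section \ref{s.poincarebirkhofflike} of the paper gives a genuinely different mechanism for producing such orbits, via prime-end compactification and the Poincar\'e--Birkhoff theorem; that route needs an intersection property coming from area preservation and the generic hypotheses defining $\cG^r_{\omega}(S)$, whereas the horseshoe argument you give needs neither, which is precisely the point of the footnote to the proposition.
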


\begin{remark*}It is easy to prove that $\tilde z_{p/q}$ projects onto a periodic point $z_{p/q}$  of $f$ of period $q$, because $p$ and $q$ are relatively prime. Denoting $O_{p/q}$ the orbit of $z_{p/q}$ it is also easy to prove that these orbits are all distinct. Observe now that $\mathrm{rot}_I(O_{p/q})=(p/q)[T]$. 
\end{remark*}

Let us state now the key result of the article:

\begin{proposition}\label{p.keyresult}
Consider $f\in \mathcal{G}^r_{\omega}(S)$ isotopic to the identity, an identity isotopy $I$ of $f$, and suppose that\begin{itemize} \item  $f$ has more than $2g-2$ periodic points if $g\geq 2$;
\item  $\mathrm{fix}_I(f)\not=\emptyset$ if $g=1$. 
\end{itemize} Then, there exists a hyperbolic fixed point $z\in\mathrm{fix}_I(f)$ with no reflection satisfying the following:  if $\tilde z\in \tilde S$ is a lift of $z$ and $\Lambda_s$, $\Lambda_u$ are branches of $\tilde z$, respectively stable and unstable, there exists $T\in G$ with $[T]\not=0$, such that  $\Lambda_u$ intersects transversally $T(\Lambda_s)$.\footnote{Of course in the case where $g\geq 2$, the proposition applies to the set $\mathrm{fix}_{\mathrm{cont}}(f)$. We need this more complicated formulation to include the case where $g=1$. It will be the case in the whole article} \end{proposition}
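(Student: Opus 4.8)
The plan is to pass to the cover $\hat S\to S$ associated with the kernel of the Hurewicz morphism $G\to H_1(S,\Z)$, whose group of deck transformations is $H_1(S,\Z)\cong\Z^{2g}$, and to denote by $\hat f$ the lift of $f$ induced by $\tilde f_I$ (it commutes with this $\Z^{2g}$-action). For a contractible hyperbolic fixed point $z$ with no reflection, I fix a lift $\hat z$ fixed by $\hat f$; since there is no reflection, each of the four branches lifts to an $\hat f$-invariant branch of $\hat z$. Under this projection, the condition ``$\Lambda_u$ meets $T(\Lambda_s)$ transversally with $[T]\neq 0$'' translates into ``the unstable branch of $\hat z$ meets transversally a nontrivial $\Z^{2g}$-translate of the stable branch'', i.e. the homoclinic configuration of $\hat z$ is not confined to finitely many fundamental cells. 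So the proposition asks for a contractible hyperbolic no-reflection fixed point all of whose lifted branch pairs cross nontrivial translates. I first record that such a fixed point exists at all: the Lefschetz/index count of Section \ref{ss.lefschtez} shows that a nonempty $\mathrm{fix}_I(f)$ always contains a hyperbolic no-reflection point (for $g\geq 2$ there are at least $2g-2\geq 2$ of them, and for $g=1$ the vanishing of the index sum over $\mathrm{fix}_I(f)$ forces a point of index $-1$). By Theorem \ref{t.numberhyp} each such point already has a transverse homoclinic intersection in $S$; the entire difficulty is to upgrade a homologically trivial connecting loop to a homologically nontrivial one.

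I would then argue by contradiction, assuming that every contractible hyperbolic no-reflection fixed point has some branch pair whose homoclinic crossings are all homologically trivial. Lifted to $\hat S$, this confines the corresponding homoclinic class of $\hat z$ to finitely many cells, so it is a compact continuum whose $\Z^{2g}$-translates are pairwise disjoint; equivalently its projection is an inessential invariant compactum in $S$. The goal is to show that inessentiality, once forced at every hyperbolic periodic point, contradicts the way the homoclinic web fills $S$. Here the crucial input is (G3): every elliptic island is enclosed by a topological disk bounded by arcs of stable and unstable manifolds of a hyperbolic point, so elliptic points only produce inessential complementary disks; combined with the recurrence of $\mu_\omega$ and the connecting results of \cite{LecSa} (under the present hypotheses the relevant hyperbolic points are homoclinically related, so their classes assemble into a single invariant continuum), the closure of the homoclinic web would be an inessential invariant compactum whose complement is a disjoint union of disks. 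Such a decomposition of a surface of genus $g\geq1$ into an inessential continuum together with complementary disks is impossible, and I would make this precise by a Lefschetz index computation carried out on a fundamental domain of $\hat S$ and compared with the index sum of the contractible fixed points in $S$.

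Finally, a single essential crossing must be promoted to the ``for every branch pair'' conclusion. For this I would invoke the inclination ($\lambda$-)lemma: a transverse crossing of the unstable branch of $\hat z$ with a translate $T\cdot(\text{stable branch})$ makes the closure of that unstable branch contain $T\cdot W^u(\hat z)$, and iterating equivariantly it contains $T^k\cdot W^u(\hat z)$ for all $k\geq1$; hence the set of homology classes realized by crossings is an unbounded sub-semigroup of $H_1(S,\Z)$ as soon as it is nonzero, and approaches every translated fixed point transversally. Using the $\hat f$-invariance of the four branches (no reflection) and the transitivity of the horseshoe produced by Theorem \ref{t.numberhyp}, the essential crossings then spread to all four pairs of branches of the same point $z$, which is the assertion. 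The main obstacle is the topological heart of the second step: rigorously ruling out the simultaneous inessentiality of all homoclinic classes. The delicate feature is that homological triviality is strictly weaker than homotopical triviality, so the argument must be conducted in the abelian cover $\hat S$ rather than in the universal cover, and must cope with the infinitely generated fundamental group of $\hat S$ when $g\geq2$, which is precisely what makes the global complementary analysis of the web subtle.
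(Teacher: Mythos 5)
Your reduction is sound in outline (the index count giving a no-reflection point in $\mathrm{fix}_I(f)$, the observation that Theorem \ref{t.numberhyp} only yields homologically uncontrolled crossings, and the final promotion step via the inclination lemma, which is essentially the same equivariance argument the paper uses). But the heart of your argument, the contradiction step, rests on a claim that is asserted rather than proved, and that claim is precisely the content of the paper's key lemma. You assume that if all crossings of a branch pair are homologically trivial, then the lifted class in the abelian cover is confined to finitely many cells, hence a compact continuum whose projection is inessential. Nothing you say forces this: the unstable branch could a priori leave every compact set of $\hat S$ without ever crossing a nontrivial translate of a stable branch, and the closure of a lifted class is compact only if (essentially) loops near the class are null-homologous --- which is what you are trying to deduce, not a hypothesis you may use. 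In the paper this implication is exactly (the contrapositive of) Lemma \ref{l:homoclinic}, and it is proved by a mechanism your proposal lacks: given a class $\kappa$ whose minimal regular neighborhood $V$ carries a class $\alpha$ with $\iota_*(\alpha)\wedge[\Gamma]\neq 0$ for some simple loop $\Gamma$, one passes to the two-sheeted cover $S'$ cut along $\Gamma$, shows the two lifts $z'_1,z'_2$ of $z$ must lie in the same class of $f'$ (otherwise $\tau$-symmetric disjoint regular neighborhoods would project to a regular neighborhood of $\kappa$ with disconnected preimage, contradicting the nonvanishing intersection number), and then applies Proposition \ref{p.localLefschetz} in $S'$ to get transverse intersections between branches of $z'_1$ and $z'_2$, which project to crossings with $[T]\wedge[\Gamma]$ odd, hence $[T]\neq 0$.

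A second, independent error: you claim that under the hypotheses the relevant hyperbolic points ``assemble into a single invariant continuum.'' Theorem \ref{t.numberhyp} says each hyperbolic periodic point has a transverse homoclinic intersection with \emph{itself} (each class is homoclinic); it does not make distinct points homoclinically related, and $\mathcal{E}(f)$ can contain many classes. The paper deals with this by separating the classes meeting $\mathrm{fix}_{I,\mathrm{h^+}}(f)$ into pairwise disjoint minimal regular neighborhoods $(V_j)_{j\in J}$ (Propositions \ref{p.separationconsequences} and \ref{p.minimal}) and proving in Lemma \ref{l:nonzerohomology}, via the Lefschetz formula for regular domains (Lemma \ref{l: lefschetzformula}), that some $V_j$ satisfies $\iota_*(H_1(V_j,\R))\neq\{0\}$: since all index $-1$ fixed points of $I$ lie in the $V_j$'s, the complementary compact subsurfaces have nonnegative total Euler characteristic, hence are annuli, and if every $V_j$ were homologically trivial $S$ would be a sphere. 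Your substitute for this count also misdescribes the topology: the complementary components of the web closure are regular domains that can be annuli or have positive genus, not merely disks, and a ``Lefschetz index computation on a fundamental domain of $\hat S$'' is not meaningful because a fundamental domain is not invariant. So the proposal is missing both pillars of the paper's proof --- the homological non-degeneracy count (Lemma \ref{l:nonzerohomology}) and the double-cover trick (Lemma \ref{l:homoclinic}) --- and the steps offered in their place do not close the gap.
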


Theorem \ref{t.main} is an immediate consequence of Proposition  \ref{p.rotationhorseshoe} and Proposition \ref{p.keyresult}. Indeed, the set of maps $f\in\Ham^r_{\omega}(S)$ that satisfy the hypothesis of Proposition \ref{p.rotationhorseshoe}, with a Hamiltonian isotopy $I$, is open for the $C^1$ topology and consequently for the $C^r$-topology. It is dense for the $C^r$-topology because it contains $\cH^r_{\omega}(S)$. Indeed  every map $f\in\cH^r_{\omega}(S)$ satisfies the hypothesis of Proposition  \ref{p.keyresult}, with a Hamiltonian isotopy $I$. 

As explained in \cite{LecSa}, the set of $f\in \Diff^r_{\omega, *}(S)$ that have more than $2g-2$ hyperbolic periodic points, is open and dense in $\Diff^r_{\omega, *}(S)$ and so the condition to have infinitely many periodic orbits with non zero rotation vector is open and dense in $\Diff^r_{\omega, *}(S)$.

We can give a much more precise statement  of Proposition \ref{p.keyresult} but before we need to introduce some terminology that will be reviewed in detail in Section \ref{s.preliminaries}. We will say that two hyperbolic periodic points $z$ and $z'$ of a map $f\in \cG^r_{\omega}(S)$  are {\it equivalent}, and we will write $z\sim z'$, if the branches of $z$ and the branches of $z'$ have the same closure. One gets an equivalence relation on the set of hyperbolic periodic points.  We will denote $\cE(f)$ the set of equivalence classes.  A \textit{periodic regular domain} is a connected periodic  open set $V$ of finite type whose complement has no isolated points. For every $\ka\in \cE(f)$ we define a subspace $\iota_*(H_1(\kappa,\R))$ of $H_1(S,\R)$ by
$$\iota_*(H_1(\kappa,\R)):=\bigcap\{\iota_*(H_1(V,\R)): V \text{ is a periodic regular domain, }\ka\subset V\}$$
where $\iota:V\to S$ is the inclusion map (see Section \ref{ss:homoclinic classes}).

\begin{proposition}\label{p.precisekeyresult}
Consider $f\in \mathcal{G}^r_{\omega}(S)$ isotopic to the identity, an identity isotopy $I$ of $f$, and suppose that 
\begin{itemize}\item $f$ has more than $2g-2$ periodic points if $g\geq 2$;
\item  $\mathrm{fix}_I(f)\not=\emptyset$ if $g=1$. 
\end{itemize}
Then there exists a hyperbolic fixed point $z\in\mathrm{fix}_I(f)$ with no reflection whose class $\kappa\in {\mathcal E}(f)$ satisfies $\iota_*(H_1(\kappa,\R))\not=\{0\}$. In that case, if $\tilde z\in \tilde S$ is a lift of $z$ and $\Lambda_s$, $\Lambda_u$ are branches of $\tilde z$, respectively stable and unstable, there exist $T_1, T_2,\cdots, T_\ell\in G$ such that
\begin{enumerate}
\item $\{[T_1],\cdots,[T_\ell]\}$ is a base of  $\iota_*(H_1(\kappa,\R))$;
\item $\Lambda_u$ intersects transversally $T_i(\Lambda_s)$.
\end{enumerate}
\end{proposition}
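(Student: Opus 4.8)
The plan is to split the statement into its two parts: the existence of a contractible fixed point whose class is homologically nontrivial, which will follow quickly from Proposition \ref{p.keyresult}, and the finer realization of a full basis by transverse intersections of a single branch pair, which is where the work lies.

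I begin with the \emph{easy inclusion}, valid for any hyperbolic fixed point $z\in\mathrm{fix}_I(f)$ with no reflection, any lift $\tilde z$ with branches $\Lambda_s,\Lambda_u$, and any $T\in G$ such that $\Lambda_u$ meets $T(\Lambda_s)$ transversally at a point $\tilde w$. Let $\Lambda_\kappa$ be the common closure of the branches of the points of the class $\kappa$ of $z$, a compact connected $f$-invariant set. Projecting the concatenation of the subarc of $\Lambda_u$ from $\tilde z$ to $\tilde w$ and the subarc of $T(\Lambda_s)$ from $\tilde w$ to $T(\tilde z)$ produces a loop $\gamma\subset\Lambda_\kappa$ with $[\gamma]=[T]$. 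Since $\Lambda_\kappa\subset V$ for every periodic regular domain $V$ with $\kappa\subset V$, we get $[T]=\iota_*[\gamma]\in\iota_*(H_1(V,\R))$ for all such $V$, hence $[T]\in\iota_*(H_1(\kappa,\R))$. In particular, applying this to the point, lift and translate produced by Proposition \ref{p.keyresult} (where $[T]\neq0$) already yields a class $\kappa$ with $\iota_*(H_1(\kappa,\R))\neq\{0\}$, which settles the existence part. The easy inclusion also shows, for this $\kappa$, that $\mathrm{span}_\R\{[T]:\Lambda_u\text{ meets }T(\Lambda_s)\}\subseteq\iota_*(H_1(\kappa,\R))$; the remaining task is the reverse inclusion, realized by a single branch pair.

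To compute $\iota_*(H_1(\kappa,\R))$ I would pass to the universal cover. Let $C$ be the connected component of $\tilde\pi^{-1}(\Lambda_\kappa)$ containing $\tilde z$ and let $G_0=\{T\in G:T(C)=C\}$ be its stabilizer. Covering space theory identifies $C\to\Lambda_\kappa$ with the covering attached to the image of $\pi_1(\Lambda_\kappa)$ in $G$, so $G_0$ is this image and $\mathrm{span}_\R\{[T]:T\in G_0\}$ equals the image of the \v{C}ech homology of $\Lambda_\kappa$ in $H_1(S,\R)$. The key structural input, for which I would rely on \cite{LecSa}, is that the periodic regular domains containing $\kappa$ form a neighbourhood basis of $\Lambda_\kappa$; granting this, the intersection defining $\iota_*(H_1(\kappa,\R))$ computes exactly that image, so $\iota_*(H_1(\kappa,\R))=\mathrm{span}_\R\{[T]:T\in G_0\}$.

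It then remains to realize every $T\in G_0$ by a transverse intersection of the \emph{fixed} pair $\Lambda_u,\Lambda_s$, and this I expect to be the main obstacle. Here I would use that $\Lambda_\kappa$ is a homoclinic class, so that transverse homoclinic points are dense in it and $\overline{\Lambda_u}=\Lambda_\kappa$. Lifting to $C$, the inclination ($\lambda$-)lemma makes $\Lambda_u$ accumulate, in the $C^1$ topology, on every unstable branch contained in $C$, in particular on $T(\Lambda_u)$ for $T\in G_0$; the density of homoclinic points inside $C$ then forces $\Lambda_u$ to cross $T(\Lambda_s)$, the crossing being transverse by condition (G2). Consequently $\{[T]:\Lambda_u\text{ meets }T(\Lambda_s)\}$ spans $\iota_*(H_1(\kappa,\R))$, and choosing $T_1,\dots,T_\ell$ among these translates with $\{[T_1],\dots,[T_\ell]\}$ a basis establishes conclusions (1) and (2). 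The delicate point throughout is the passage between the \emph{external}, homological description of $\iota_*(H_1(\kappa,\R))$ through regular domains and the \emph{internal}, dynamical one through $G_0$ and the branches of a single lift; this is precisely where the machinery of \cite{LecSa} is indispensable.
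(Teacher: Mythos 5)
Your first paragraph is sound: the observation that a transverse crossing of $\Lambda_u$ with $T(\Lambda_s)$ forces $[T]\in\iota_*(H_1(\kappa,\R))$ (because the resulting loop lies in the branches of $\kappa$, hence in every periodic regular domain containing $\kappa$ by Proposition \ref{p.regulardomain-class}), is correct, and combined with Proposition \ref{p.keyresult} it settles the existence statement. The rest of the argument, however, has two genuine gaps, and they occur exactly where the difficulty of the proposition lies. First, your identification $\iota_*(H_1(\kappa,\R))=\mathrm{span}_\R\{[T]:T\in G_0\}$ rests on the claim that the periodic regular domains containing $\kappa$ form a neighbourhood basis of the compact set $K(\kappa)$. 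That claim appears neither in \cite{LecSa} nor in this paper, and there is no reason for it to hold: a periodic regular domain has finite type, so whenever $S\setminus K(\kappa)$ has infinitely many connected components (the typical situation), any periodic regular domain containing $\kappa$ must contain all but finitely many of them and cannot be squeezed onto $K(\kappa)$. The paper only defines $\iota_*(H_1(\kappa,\R))$ as a stabilized intersection, realized by a minimal regular neighbourhood (Proposition \ref{p.minimal}), and that neighbourhood may carry strictly more homology than any small neighbourhood of $K(\kappa)$. Note moreover that the inclusion you actually need, $\iota_*(H_1(\kappa,\R))\subseteq\mathrm{span}_\R\{[T]:T\in G_0\}$, is (by your own easy inclusion, plus the remark that a crossing forces $T\in G_0$) essentially equivalent to the proposition being proved, so it cannot serve as an input.

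Second, and fatally, the final step is circular. The inclination lemma yields that $\Lambda_u$ accumulates on $T(\Lambda_u)$ only \emph{after} one knows that $\Lambda_u$ meets $T(\Lambda_s)$ transversally; it cannot produce that first crossing. Likewise, density of transverse homoclinic points in $K(\kappa)$ is a statement in $S$: near any point of the class \emph{some} translate $T'(\Lambda_s)$ crosses $\Lambda_u$, but nothing controls which deck transformation $T'$ occurs, and pinning down $T'$ with prescribed nonzero (and then linearly independent) homology class is the entire content of the proposition. This is precisely what the paper's Lemma \ref{l:homoclinic} and its inductive proof accomplish: given independent $T_1,\dots,T_q$ already realized by crossings, one uses the intersection form to pick a primitive integral class $\beta$ with $\beta\wedge[T_i]=0$ for all $i\le q$ and $\iota_*(\alpha)\wedge\beta\neq 0$ for some $\alpha\in H_1(V,\R)$ (here $V$ is a minimal regular neighbourhood and $\perp$ is taken with respect to $\wedge$), represents $\beta$ by a simple loop $\Gamma$, and reruns the double-cover argument of Lemma \ref{l:homoclinic} --- cutting along $\Gamma$ and using Propositions \ref{p.separationconsequences} and \ref{p.minimal} to show that the two lifts of $z$ are equivalent upstairs --- to produce a crossing $T_{q+1}$ with $[T_{q+1}]\wedge[\Gamma]\neq0$, which guarantees independence from $T_1,\dots,T_q$. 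Your outline would be repaired by replacing the covering-space/\v{C}ech identification and the accumulation argument with this induction.
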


It is an immediate consequence of the inclination lemma (see \cite{Pal}) and of the fact that $\tilde f_I$ commutes with the elements of $G$ that the set of $T\in G$ such that $\Gamma_u$ intersects transversally $T(\Gamma_s)$ is stable by composition. Consequently for every $(n_1,\dots, n_{\ell})\in\N^{\ell}\setminus\{0,\dots, 0)\}$ there exists $T\in G$ satisfying $[T]=\sum_{i=0}^{\ell} n_i[T_i]$ such that $\Lambda_u$ intersects transversally $T(\Lambda_s)$.  The above proposition implies that the rotation set of $f$ has nonempty interior within $\iota_*(H_1(\kappa,\R))$. In fact,
there exists $a>0$ such that for any $\textbf{p/q}=(p_1/q_1,\cdots, p_\ell/q_\ell)\in (0,a)^\ell$ where $p_i/q_i$ are written in an irreducilbe way, a periodic point $z_{\textbf{p/q}}$ exists with $\mathrm{rot}_I(O_{\textbf{p/q}})=\sum_{i=1}^{i=\ell}(p_i/q_i)[T_i]$ where $O_{\textbf{p/q}}$ is the orbit of $z_{\textbf{p/q}}$.  This can be done by finding a small rectangle $R$ containing $z$, such that for  some power $f^m$, setting $\tilde R$ the connected component of  $\pi^{-1}(R)$ containing $\tilde z$, we have that $\tilde f^m(\tilde R)$ intersects $T_i(\tilde R)$ in a Markovian way. This implies that polyhedron in $\iota_*(H_1(\kappa,\R))$ with vertices $0$ and $[T_i], 1\le i\le \ell$, is contained in the rotation set of $f^m.$ See \cite{AlBrPas} for an explicit construction in the case of an Axiom A diffeomorphim. 

We can justify Proposition \ref{p.precisekeyresult} with the following:

\begin{proposition}\label{p.dimensionrotation set}
Consider $f\in \Diff^r_{\omega, *}(S)$, an identity isotopy $I$ of $f$ and suppose that \begin{itemize} \item $f$ has more than $2g-2$ periodic points if $g\geq 2$;
\item  $\mathrm{fix}_I(f)\not=\emptyset$ if $g=1$. \end{itemize}
Then the subspace of $H_1(S,\R)$ generated by the $\iota_*(H_1(\kappa,\R))$, where $\kappa\in {\mathcal E}(f)$ contains a hyperbolic fixed point in $\mathrm{fix}_I(f)$ with no reflection, has dimension at least $g$ and contains a Lagrangian subspace (for the intersection form $\wedge$ defined on $H_1(S,\R)$). Consequently, the subspace of $H_1(S,\R)$ generated by the rotation set $\mathrm{rot}_I(f)$ satisfies the same property.
\end{proposition}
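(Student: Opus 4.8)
The plan is to translate the statement into a property of a single subspace under the intersection form and to reduce the ``Lagrangian'' conclusion to a coisotropy condition. Write $\mathcal S\subset\mathrm{fix}_I(f)$ for the set of hyperbolic fixed points with no reflection, let $K\subset S$ be the union of the closures of all stable and unstable branches of the points of $\mathcal S$, and set $W:=\iota_*(H_1(K,\R))$. Since distinct classes are either equal or disjoint (a standard feature of such homoclinic classes, which I would first confirm in the present setting), $K$ is the disjoint union of the closures $\overline\kappa$ over the relevant classes, so $W$ coincides with the subspace generated by the $\iota_*(H_1(\kappa,\R))$. It then suffices to prove that $W$ contains a Lagrangian of $(H_1(S,\R),\wedge)$. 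I would record at the outset the elementary fact that a subspace of a symplectic space contains a Lagrangian if and only if it is coisotropic, i.e. $W^{\perp}\subseteq W$: given coisotropy, a Lagrangian of the symplectic quotient $W/W^{\perp}$ pulls back to a Lagrangian of $H_1(S,\R)$ contained in $W$, and coisotropy forces $\dim W\ge g$. Both assertions of the proposition would thus follow from coisotropy of $W$.

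Next I would identify $W^{\perp}$. Approximating $K$ by a compact regular neighbourhood $V$ and writing $U'=\overline{S\setminus V}$ for the complementary subsurface glued to $V$ along $\partial V$, Mayer--Vietoris together with Poincar\'e duality gives $\iota_*(H_1(V,\R))^{\perp}=\iota_*(H_1(U',\R))$; passing to the limit over the regular domains that define $\iota_*(H_1(\kappa,\R))$ yields $W^{\perp}=\iota_*(H_1(U,\R))$, where $U=S\setminus K$. Coisotropy of $W$ is therefore equivalent to the containment $\iota_*(H_1(U,\R))\subseteq W$. Because the boundary circles of the components of $U$ are homologous into $K$, this containment holds as soon as every component of $U$ has genus zero, for then $\iota_*(H_1(U,\R))$ is generated by those boundary classes.

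The heart of the argument, and the step I expect to be the main obstacle, is then to show that each connected component $U_j$ of $U$ is planar. Every $U_j$ is $f$-periodic and its boundary consists of branches of points of $\mathcal S$, so a suitable power of $f$ restricts to an area-preserving map of $\overline{U_j}$. If $U_j$ had positive genus, then $\chi(\overline{U_j})<0$, and the Lefschetz-index analysis along invariant curves developed in \cite{LecSa} --- the same analysis producing the lower bound of $2g-2$ no-reflection saddles in $\mathrm{fix}_I(f)$ --- would force the interior of $U_j$ to contain a hyperbolic fixed point with no reflection lying in $\mathrm{fix}_I(f)$. By the very definition of $K$, its branches, and hence the point itself, would belong to $K$, contradicting $U_j\subseteq S\setminus K$. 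Here the area-preserving hypothesis is essential: it rules out the sink/source configurations that would otherwise absorb the index without producing saddles, and this is precisely where I expect the argument to be delicate and to rely most heavily on \cite{LecSa}. In the torus case $g=1$ the target reduces to $W\neq\{0\}$, i.e. to the existence of a no-reflection saddle in $\mathrm{fix}_I(f)$ one of whose branch-closures carries a nonzero class, which the same index and topology analysis provides under the hypothesis $\mathrm{fix}_I(f)\neq\emptyset$.

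Finally, for the statement about the rotation set I would argue that for each relevant class $\kappa$ with $\iota_*(H_1(\kappa,\R))\neq\{0\}$ the essential homology of its branches is realized by rotation vectors: the transverse homoclinic structure produces, exactly as in Proposition~\ref{p.rotationhorseshoe} and the construction of rotational horseshoes preceding it, periodic orbits whose rotation vectors span $\iota_*(H_1(\kappa,\R))$. Consequently the subspace of $H_1(S,\R)$ generated by $\mathrm{rot}_I(f)$ contains $W$, and therefore contains the same Lagrangian subspace, which completes the deduction.
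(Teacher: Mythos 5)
Your overall strategy (an index count showing that the complementary pieces are topologically trivial, followed by linear algebra producing the Lagrangian) is the same as the paper's, and two of your steps are sound: the reduction of both conclusions to coisotropy of a single subspace, and the final deduction for the rotation set via Propositions \ref{p.precisekeyresult} and \ref{p.rotationhorseshoe}. The genuine gap is your decision to run the argument on $K$ (the union of the branch closures) and on $U=S\setminus K$ instead of on invariant regular domains. The space $\iota_*(H_1(\kappa,\R))$ is defined as the intersection of the $\iota_*(H_1(V,\R))$ over \emph{periodic regular domains} $V$ containing $\kappa$; these are invariant, finite-type open sets, and nothing forces them to shrink down to the branch closure $K(\kappa)$ --- the intersection is already realized by a minimal regular neighborhood (Proposition \ref{p.minimal}), which can be much larger than $K(\kappa)$. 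Hence neither your identification of $W=\iota_*(H_1(K,\R))$ with the span of the $\iota_*(H_1(\kappa,\R))$, nor your computation of $W^{\perp}=\iota_*(H_1(U,\R))$ ``by passing to the limit'', is justified: the Mayer--Vietoris/duality identity you invoke holds for each compact subsurface, but the limit you need (admissible domains converging to $K$) does not exist in this setting, and for a general compact set $K$ the duality between the singular-homology images of $K$ and of $S\setminus K$ can fail.

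Second, and more seriously, the step you yourself flag as the heart of the argument applies Lefschetz-index theory to the connected components of $S\setminus K$. These components need not be of finite type (the closure of the invariant manifolds is lamination-like and its complement can have infinitely many ends), so they are not regular domains, and neither Mather's prime-end results (Theorem \ref{t.pe-genericconsequences}, Corollary \ref{c.e-genericconsequences}) nor Lemma \ref{l: lefschetzformula} applies to them. Moreover, Lemma \ref{l: lefschetzformula} requires every connected component of $\tilde\pi^{-1}(V)$ to be invariant under $\tilde f_I$; in the paper this hypothesis is available precisely because the domains used contain points of $\mathrm{fix}_I(f)$, which is exactly what your complementary components lack (that is the whole point of the argument). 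The paper is organized so that no index formula is ever applied to a complementary piece: one separates the relevant classes into pairwise disjoint minimal regular neighborhoods $V_j$ (Propositions \ref{p.separationconsequences} and \ref{p.minimal}), applies Lemma \ref{l: lefschetzformula} only to the $V_j$, and then controls the complement of the compact approximations $\Sigma_j$ by pure Euler-characteristic bookkeeping: the leftover fixed points of $\mathrm{fix}_I(f)$ all have index $+1$, so the total Euler characteristic of the complementary pieces is $\geq 0$, while each piece has Euler characteristic $\leq 0$ because no boundary curve bounds a disk outside its $\Sigma_j$; hence all pieces are annuli (Lemmas \ref{l:nonzerohomology} and \ref{l:completehomology}), and the Lagrangian is exhibited explicitly from the boundary curves of the $\Sigma_j$ together with genus curves inside them (Corollary \ref{c:isotropic}). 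To repair your proof you would have to replace $K$ and $S\setminus K$ by these regular neighborhoods and their compact approximations throughout, at which point your argument becomes the paper's.
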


\begin{remark*}  

If $r\geq 3$ then there exists an open set $\mathcal O$ of $\mathrm{Ham}^r_{\omega}(S)$ such that for every $f\in\mathcal O$, the subspace  of $H_1(S,\R)$ generated by $\mathrm{rot}_I(f)$ has dimension $\leq g$, if $I$ is a Hamiltonian isotopy.  It is a consequence of KAM theory. Indeed, consider a sequence $(A_i)_{1\leq i\leq g}$ of  disjoint closed annuli whose union has a connected complement of genus $0$. Fix $a>0$ and consider a map $f_*\in\mathrm{Ham}^{\infty}_{\omega}(S)$ that is conjugate to the classical integrable twist $\tau:\R/\Z\times [0,a]\to \R/\Z\times [0,a]$ on $A_i$ by a diffeomorphism $h_i$ such that $h_i^*(dx\wedge dy)=\omega$. It means that $\tau$ is lifted by $\tilde\tau:\R\times [0,a]\to \R\times [0,a]$, where $\tilde\tau(x,y)=(x+y,y)$. By a result of Herman \cite{He}, every map $f\in\mathrm{Ham}^3_{\omega}(S)$  close to $f_*$, for the $C^3$-topology, admits, for every $i\in\{1,\dots, g\}$, at least one invariant loop $\Gamma_i$ that is not null homotopic in $A_i$. Then,  for every  Borel probability measure $\mu$ invariant by $f$, it holds that $\rho_I(\mu)\wedge [\Gamma_i]=0$. By Propositions \ref{p.precisekeyresult} and \ref{p.dimensionrotation set}
we deduce that  there exists a non empty open set $\mathcal O'\subset \mathcal O$ of $\mathrm{Ham}^r_{\omega}(S)$ such that for every $f\in\mathcal O'$, the subspace  of $H_1(S,\R)$ generated by $\mathrm{rot}_I(f)$ has dimension $g$, if $I$ is a Hamiltonian isotopy.

Let us consider now the case where $r=1$. According to Arnaud-Bonatti-Crovisier \cite{ArnBoCr}, there exists a residual set of ${\mathcal H}^1_{\omega}(S)$ whose elements have a dense orbit and a unique class $\kappa$. It is natural question to ask whether  there exists a residual set of ${\mathcal H}^1_{\omega}(S)$ whose elements still have a dense orbit (and a unique class) when lifted to a finite covering. We will see later that it is equivalent to say that  $\iota_*(H_1(\kappa,\R))=H_1(S,\R)$ or to say that $\mathrm{gen}(\kappa)=2g$. In that case, the subspace of $H_1(S,\R)$ generated by the rotation set $\mathrm{rot}_I(f)$
has dimension 2g. It is not difficult to prove that if it is true, then generically in  ${\mathcal H}^1_{\omega}(S)$, there is no periodic regular domain but $S$. Moreover, a positive answer to the question would imply that  there exists an open and dense set $\mathcal O'$ of $\mathrm{Ham}^1_{\omega}(S)$ such that for every $f\in\mathcal O'$, the subspace  of $H_1(S,\R)$ generated by $\mathrm{rot}_I(f)$ has dimension $2g$, if $I$ is a Hamiltonian isotopy.

\end{remark*}
\subsection{Organization of the  article and acknowledgements}\label{ss.organization} In Section \ref{s.preliminaries} we will state and develop some concepts, results and tools which are basics along the paper. More precisely, in  Section \ref{ss.lefschtez}, we will remind classical results about Lefschetz formula and in Sections \ref{ss.regular} and \ref{ss:homoclinic classes} results obtained in \cite{LecSa} concerning generic conservative diffeomorphisms and related to periodic regular domains and classes of hyperbolic periodic points, in parti-cular when these classes are homoclinic, meaning that the stable and unstable branches of the elements of the class intersect. The results appearing in Sections \ref{ss:Minimal regular neighborhoods} and  \ref{ss.moreaboutregulardomains} are new: we will introduce the notion of minimal regular neighborhood of a class and will state a Lefchetz formula for such an object. The proofs of Propositions \ref{p.keyresult} and \ref{p.precisekeyresult} will be given in Section \ref{s.keyresult}. We will end the article with Section \ref{s.poincarebirkhofflike}  where we will explain how to apply the classical Poincar\'e-Birkhoff theorem to get a criterium of existence of infinitely many periodic orbits with non zero rotation number.

We would like to thank Viktor Ginzburg and Ba\c{s}ak G\" urel for suggesting us to look at this problem. We would also like to thank Sylvain Crovisier and Raphael Krikorian for useful comments. 

\section{Preliminaries}\label{s.preliminaries}

\subsection{Lefschetz index}\label{ss.lefschtez}

Lefschetz formula says that for every homeomorphism $f$ of $S$ with finitely many fixed points, we have
$$\sum_{z\in \mathrm{fix}(f)}i(f,z)=\sum_{i=0}^2(-1)^i\mathrm{tr}(f_{*,i}),$$
where $i(f,z)$ is the Lefschetz index of $f$ at $z$ and $f_{*,i}$  is the endomorphism of the $i$-th homology group $H_i(S,\R)$ induced by $f.$ In particular, if $f$ is isotopic to the identity, we have
$$\sum_{z\in \mathrm{fix}(f)}i(f,z)=\chi(S),$$
where $\chi(S)=2-2g$ is the Euler characteristic of $S$. In that case, there is a more precise version of Lefschetz formula. Suppose that $f$ is a homeomorphism of $S$ isotopic to the identity and consider an identity isotopy $I$ of $f$. Then it holds that 
$$\sum_{z\in \mathrm{fix}_I(f)}i(f,z)=\chi(S).$$

Suppose that $f\in\mathcal{G}^r_\omega(S)$. Then we have:
\begin{itemize}
\item $i(f,z)=-1$ if $z$ is a hyperbolic fixed point with no reflection,
\item  $i(f,z)=1$ if $z$ is a hyperbolic fixed point with reflection,
\item $i(f,z)=1$ if $z$ is an elliptic fixed point.
\end{itemize}

Denoting $\mathrm{fix}_\mathrm{h^+}(f)$ the set of  hyperbolic fixed points with no reflection, it holds that  $$\#\mathrm{fix}_\mathrm{h^+}(f)\ge- \sum_{z\in \mathrm{fix}(f)}i(f,z)=-\chi(S)=2g-2,$$
with a strict inequality in the presence of an elliptic fixed point or a hyperbolic fixed point with reflection. 
Similarly, if $I$ is an identity isotopy of $f$, we have
$$\#\mathrm{fix}_{I,\mathrm{h^+}(f)}\ge 2g-2,$$ 
where $\mathrm{fix}_{I,\mathrm{h^+}}(f)=\mathrm{fix}_I(f)\cap\mathrm{fix}_\mathrm{h^+}(f)$.

\subsection{Regular domains and generic conservative diffeomorphisms}\label{ss.regular}

A {\it regular domain} of $S$ is a connected open set $V$ of finite type whose complement has no isolated point. Such a set has finitely many ends and its complement has finitely many connected components, none of them reduced to a point. One proves easily that every connected component of the intersection of finitely many regular domains is a regular domain (see \cite{LecSa}). If $V$ is a regular domain, it can be compactified in three natural ways.
\begin{itemize}

\item The {\it ambient compactification} is the closure $\overline V$ of $V$ in $S$.

\item The {\it end compactification} is obtained by adding every end of $V$: one gets a boundaryless compact surface $\check V$.
 \item
The {\it prime end compactification} is obtained by blowing up every end by the circle of prime ends (see Mather \cite{Math}): one gets a compact surface with boundary $\widehat V$.

\end{itemize}

If $V$ is invariant by an orientation preserving homeomorphism $f$ of $S$, then $f_{\vert \overline V}$ is an extension of $f_{\vert V}$ to $\overline V$.  Moreover there exists a natural extension $\check f$ of $f_{\vert V}$ to $\check V$ that permutes the ends. An important property of the prime end compactification is that $f_{\vert V}$ admits an extension by a homeomorphism $\widehat f$ of $\widehat V$: every added circle $C$ is periodic and if $q$ is its period, then $\widehat{f}^{q}{}_{\vert C}$ is orientation preserving, so the rotation number $\mathrm{rot}(C)\in\R/\Z$ of $\widehat f^{q}{}_{\vert C}$ can be defined if $C$ is endowed with the induced orientation.

\medskip

Let us state an important property, easy to prove, satisfied by a map $f\in \cG^r_{\omega}(S)$: if $C$ is a simple loop invariant by a power of $f\in \cG^r_{\omega}(S)$, then there is no periodic point of $f$ on $C$.  The fundamental following generalization is due to Mather \cite{Math}:

\begin{theorem}\label{t.pe-genericconsequences}
Consider $f\in \cG^r_{\omega}(S)$. If $V$ is a regular domain invariant by $f$, then $\widehat f$ has no periodic point on the boundary of $\widehat V$. Equivalently, for every added circle $C$, one has $\mathrm{rot}(C)\not\in \Q/\Z$.
\end{theorem}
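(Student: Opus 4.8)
The plan is to argue by contradiction and to reduce everything to the elementary property stated just above the theorem: no simple loop invariant by a power of $f$ can carry a periodic point of $f$. First I would record the equivalence of the two formulations. Since $\widehat f^q{}_{|C}$ is an orientation-preserving circle homeomorphism, it has a periodic point if and only if its rotation number lies in $\Q/\Z$; hence $\widehat f$ has a periodic point on $C$ exactly when $\mathrm{rot}(C)\in\Q/\Z$. Replacing $f$ by a suitable power (which again belongs to $\cG^r_\omega(S)$, as the text notes) I may then assume that $C$ is fixed by $\widehat f$, that $\mathrm{rot}(C)=0$, and that $\widehat f{}_{|C}$ has a fixed prime end $\mathfrak p$.

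The heart of the argument is to convert this fixed prime end into a periodic point of $f$ lying on an $f$-invariant simple loop. For this I would invoke the Carath\'eodory--Cartwright--Littlewood--Mather theory of prime ends for the invariant domain $V$. The fixed prime end $\mathfrak p$ has a nonempty compact connected impression contained in the frontier $\partial V$ and invariant under $f$, and it is accessible from inside $V$ by a crosscut $\alpha$. Because $f$ preserves $\omega$ and therefore has no wandering behaviour on $V$, Mather's area-preserving refinement of the Cartwright--Littlewood fixed point theorem applies and produces an accessible fixed point $x\in\partial V$ carried by $\mathfrak p$. The non-resonance hypothesis (G1) on elliptic points should force $x$ to be hyperbolic: an accessible elliptic fixed point, being non-resonant, would force the prime-end dynamics near $\mathfrak p$ to rotate irrationally, which is incompatible with $\mathrm{rot}(C)=0$.

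Having a hyperbolic accessible fixed point $x$, I would then build the invariant simple loop. Since $x$ is accessible through the fixed prime end, one branch $\Lambda$ of $W^s(x)$ or $W^u(x)$ bounds $V$ locally and is accessible from $V$; being a branch it is invariant (no reflection) or fixed by $f^2$. Following $\Lambda$ away from $x$ along the frontier and using that $\mathfrak p$ is fixed, the accessible channel must close up: concatenating the crosscut $\alpha$ with its image $f(\alpha)$ inside $V$ and with the relevant pieces of the branches at $x$ produces a loop through $x$ isotopic to its own image, and a standard regularisation (a Brouwer-type argument) then yields an honest simple loop $\gamma$ invariant by a power of $f$ and passing through the periodic point $x$. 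This contradicts the elementary property quoted above. The only alternative way for such a channel to close without producing $\gamma$ is for a stable branch of $x$ to coincide with an unstable branch, i.e. a saddle connection, which is excluded by the transversality hypothesis (G2). In either case one reaches a contradiction, so $\mathrm{rot}(C)\notin\Q/\Z$.

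I expect the main obstacle to be the middle step: correctly extracting from the prime-end theory a genuine fixed point $x$ of $f$ that is accessible, and then controlling the geometry of the accessible channel precisely enough to manufacture the invariant simple loop (or to detect the forbidden saddle connection). This is exactly where area preservation and Mather's refinement of Cartwright--Littlewood become indispensable, and where the careful bookkeeping of the principal points versus the impression of the prime end has to be carried out.
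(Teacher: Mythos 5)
First, a point of calibration: the paper does not prove Theorem \ref{t.pe-genericconsequences} at all. It is quoted as a known result of Mather \cite{Math}, and the text immediately after the statement explains that Mather's proof used a condition slightly different from (G3) and that the extension to the present hypotheses is carried out in \cite{KLecN}. So your attempt can only be compared with the arguments in those references, and measured that way it has the right overall architecture (rational rotation number $\Rightarrow$ an accessible periodic point in the frontier $\Rightarrow$ contradiction with genericity) but a genuine gap at its center. The step you delegate to ``Mather's area-preserving refinement of the Cartwright--Littlewood fixed point theorem'' is not an available tool: no such refinement exists in that form, and the implication you need --- for an area-preserving (or merely nonwandering) homeomorphism, a fixed prime end forces a fixed point in the frontier which is moreover \emph{accessible} --- is precisely the hard content of \cite{KLecN}, proved there with Le Calvez's equivariant foliation/Brouwer theory, not with continuum fixed-point theorems. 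Cartwright--Littlewood applies to an invariant non-separating continuum; the impression of your fixed prime end need not be non-separating, the fixed point it would yield need not lie in the principal set or be accessible, and accessibility is exactly what your subsequent construction uses. For general homeomorphisms the implication is simply false, which is why area preservation must enter in an essential and nontrivial way; flagging this step as ``the main obstacle'' does not discharge it --- it is the theorem.

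The two steps surrounding it are also not correct as stated. Elliptic periodic points are excluded from frontiers of invariant regular domains by condition (G3) (or by Mather's original hypothesis that elliptic points are surrounded by invariant curves of irrational rotation number, see the footnote following the theorem in the paper), not because non-resonant eigenvalues ``force the prime-end dynamics to rotate irrationally'': the derivative at an elliptic point controls nothing, at finite differentiability, about the prime ends of a domain accumulating on it, and the whole reason \cite{KLecN} is needed here is to make the argument work under (G3) rather than under Mather's stronger surrounding-curves condition. Finally, your closing step --- concatenating crosscuts and pieces of invariant branches and invoking ``a standard regularisation (a Brouwer-type argument)'' to produce a simple loop through $x$ invariant by a power of $f$ --- is unsupported, and the dichotomy you assert (invariant loop versus saddle connection) is not established: branches of hyperbolic points of area-preserving diffeomorphisms do not generically close up, and in the actual proofs the contradiction with (G2) is reached by showing that a stable or unstable branch of the accessible hyperbolic point must itself lie in the frontier and be accessible, forcing a non-transverse intersection of branches; no invariant loop is ever produced, so the ``elementary property'' stated before the theorem is not the lever the real argument rests on. Note also that in \cite{KLecN} the statement corresponding to Theorem \ref{t.genericconsequences} (no periodic points on the frontier at all) is what combines with the hard accessibility theorem to give the irrationality of the rotation number; that is the decomposition your sketch is implicitly reaching for.
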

The proof of the theorem was using a slightly different condition than (G3)\footnote{We can replace (G3) by the condition that  every elliptic periodic point is surrounded by periodic curves of irrational rotation number. Such a condition would be generic for $r$ large enough. In fact, due to a recent result of Contreras and Oliveira \cite{CoO}, it is highly probable that to do define $\mathcal G_{\omega}^r(S)$, no condition is needed about elliptic periodic points except the fact that the eigenvalues of the derivatives are not roots of the unity.} but was extended to our situation in \cite{KLecN}. In the same article the following was shown (see  \cite[Theorem E]{KLecN}):

\begin{theorem}\label{t.genericconsequences}
Consider $f\in \cG^r_{\omega}(S)$. If $V$ is a regular domain invariant by a power of $f\in \cG^r_{\omega}(S)$, then $f$ has no periodic point on the frontier of $V$ in $S$.
\end{theorem}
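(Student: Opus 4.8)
The plan is to derive Theorem~\ref{t.genericconsequences} from the prime end statement Theorem~\ref{t.pe-genericconsequences}: I will show that a periodic point of $f$ lying on the topological frontier $\mathrm{Fr}(V)=\overline V\setminus V$ would force a periodic point of the prime end extension $\widehat f$ on $\partial\widehat V$, which is exactly what Theorem~\ref{t.pe-genericconsequences} forbids.

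First I would reduce to a fixed point statement for a power. Since $\cG^r_\omega(S)$ is stable under taking powers and $V$ is invariant by some power $f^k$, a hypothetical periodic point $z\in\mathrm{Fr}(V)$ of period $q$ gives, after replacing $f$ by $g=f^{kq}$, a point with $g(z)=z$ and $g(V)=V$, where $g\in\cG^r_\omega(S)$ and $z$ still lies on the (map independent) set $\mathrm{Fr}(V)$. As $\mathrm{Fr}(V)$ is $g$-invariant and $g$ extends to $\widehat g$ on $\widehat V$, it suffices to prove: \emph{if $g\in\cG^r_\omega(S)$ and $g(V)=V$, then $g$ has no fixed point on $\mathrm{Fr}(V)$.} By Theorem~\ref{t.pe-genericconsequences} applied to $g$, the extension $\widehat g$ has no periodic point on $\partial\widehat V$; so it is enough to manufacture one out of a fixed point $z\in\mathrm{Fr}(V)$.

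The bridge between the two boundaries is accessibility. Assume $z\in\mathrm{Fr}(V)$ with $g(z)=z$. The first task is to prove that $z$ is accessible from $V$, i.e. that some arc in $V$ lands at $z$. Here I would exploit the generic local structure (G1): if $z$ is a hyperbolic saddle, choose a linearizing chart $g(x,y)=(\la x,\la^{-1}y)$ with $\la>1$; for a point $w\in V$ close to $z$ in one of the four local sectors, invariance of $V$ forces the backward (resp. forward) iterates of $w$ to hug the adjacent stable (resp. unstable) branch, so that the branches of $z$ bordering the sectors met by $V$ lie in $\overline V$ and yield arcs realizing accesses to $z$. If $z$ is elliptic, I would funnel the argument through condition (G3): arbitrarily small disks $D\ni z$ are bounded by finitely many pieces of stable and unstable manifolds of a hyperbolic periodic point $z'$, and the way $V$ enters such a $D$ is pinned to these separatrices, reducing the analysis to the saddle case at $z'$. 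Once $z$ is accessible, the local finiteness of the structure shows that only finitely many prime ends admit $z$ as principal point; since $g$ preserves $V$ and fixes $z$, the extension $\widehat g$ permutes this finite set of prime ends, so a power $\widehat g^{\,m}$ fixes one of them. That fixed prime end is a periodic point of $\widehat g$ on $\partial\widehat V$ (equivalently, some added circle carries a rational rotation number), contradicting Theorem~\ref{t.pe-genericconsequences} and finishing the proof.

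The main obstacle is precisely the accessibility step together with the access--prime end dictionary. One must rule out a wild, non locally connected piece of $\mathrm{Fr}(V)$ carrying the fixed point: a priori $V$ could accumulate at $z$ through a Cantor-like family of blobs rather than through a genuine sector, in which case $z$ need not be accessible. This is where the genericity conditions (G1)--(G3) are essential, since they force the germ of $V$ at $z$ to consist of finitely many tame sectors, so that $z$ is accessible and the accesses correspond to finitely many prime ends permuted by $\widehat g$. The elliptic case is the most delicate, as it must be routed through the saddle analysis via the bounding disks of (G3); the hyperbolic case is comparatively direct, resting on the fact that the invariant branches adjacent to $V$ accumulate inside $\overline V$.
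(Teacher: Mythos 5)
Your proposal is not a reconstruction of an argument in the paper: the paper gives no proof of Theorem~\ref{t.genericconsequences} at all, but quotes it as Theorem~E of \cite{KLecN}, where Mather's prime-end techniques are extended to the present generic class. So you are in effect proposing a new, self-contained derivation of that cited result from Theorem~\ref{t.pe-genericconsequences}, and the derivation has two genuine gaps, both located exactly where the substance of \cite{KLecN} lies. The first is the accessibility step. Your local analysis at a hyperbolic fixed point $z\in\mathrm{Fr}(V)$ does show that if $V$ accumulates on $z$ through a sector then the adjacent invariant branches lie in $\overline V$; but branches in $\overline V$ do not produce an arc in $V$ landing at $z$. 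The branches may lie entirely in $\mathrm{Fr}(V)$, with $V$ accumulating on them in a non--locally-connected fashion, and then $z$ is simply not accessible from $V$. Nothing in (G1)--(G3) tames the germ of $V$ at $z$; on the contrary, generic saddles carry transverse homoclinic tangles (Theorem~\ref{t.numberhyp}), so the frontier near $z$ should be expected to be wild. Ruling out \emph{buried} (inaccessible) periodic points in frontiers of invariant domains is precisely the hard, global part of Theorem~E of \cite{KLecN}; it is not a local statement. Note also that invoking Proposition~\ref{p.regulardomain-class} to settle the hyperbolic case would be circular, since its proof in \cite{LecSa} uses Theorem~\ref{t.genericconsequences}.

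The second gap is that, even granting accessibility, your final step fails. You claim that only finitely many prime ends admit $z$ as a principal point, so that a power of $\widehat g$ fixes one of them, forcing a rational prime-end rotation number against Theorem~\ref{t.pe-genericconsequences}. Finiteness of the set of accesses is unjustified, and, worse, the implication ``accessible fixed point on the boundary $\Rightarrow$ rational prime-end rotation number'' is \emph{false} for general surface homeomorphisms: take $\Delta=\{z\}\cup\bigcup_{n\in\Z}\delta_n$, where the $\delta_n$ are disjoint arcs landing at $z$ with directions $n\alpha \bmod 1$ ($\alpha$ irrational) and diameters tending to $0$, and a homeomorphism of $S^2$ fixing $z$ and sending $\delta_n$ to $\delta_{n+1}$; the complement $U=S^2\setminus\Delta$ is an invariant open disk, $z\in\partial U$ is a fixed accessible point, yet the prime ends with principal point $z$ form an infinite set permuted with irrational combinatorics and the prime-end rotation number equals $\alpha\notin\Q$. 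Examples of exactly this ``hairy'' type appear in \cite{KLecN} to show that a recurrence hypothesis (nonwandering, e.g.\ area preservation) is indispensable for passing from boundary periodic points to periodic prime ends. Consequently the bridge you need cannot be built from (G1)--(G3) by local arguments alone: it is the content of the global theorems of \cite{KLecN} (for nonwandering maps, irrational prime-end rotation number implies no periodic point on the boundary), i.e.\ of the very result being cited.
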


An immediate consequence of Theorem \ref{t.pe-genericconsequences} is the following:

\begin{corollary}\label{c.e-genericconsequences}
Consider $f\in \cG^r_{\omega}(S)$. If $V$ is a regular open set invariant by $f\in \cG^r_{\omega}(S)$ and if $\zeta$ is an end of $V$ of period $q$ (as a periodic point of $\check f)$, then for every $n\geq 1$, the Lefschetz index of   $\check f^{qn}$ at $\zeta$ is equal to $1$.
\end{corollary}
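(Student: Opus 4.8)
The plan is to transport the problem to the prime end compactification, where Theorem \ref{t.pe-genericconsequences} applies, and then to perform a local fixed point index computation. Recall (Mather \cite{Math}) that collapsing each prime end circle of $\widehat V$ to the corresponding end produces the end compactification $\check V$, and that this collapse semiconjugates $\widehat f$ to $\check f$. The periodic end $\zeta$ of period $q$ corresponds to a prime end circle $C$ of period $q$; a neighborhood of $\zeta$ in $\check V$ is a disk whose preimage in $\widehat V$ is a half open collar $S^1\times[0,1)$ with $S^1\times\{0\}=C$, on which $\widehat f^{qn}$ realizes the local expression of $\check f^{qn}$.

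First I would fix $n\geq 1$ and record the two consequences of genericity that I need. By Theorem \ref{t.pe-genericconsequences}, $\mathrm{rot}(C)\notin\Q/\Z$, so $\widehat f^{qn}{}_{\vert C}=(\widehat f^{q}{}_{\vert C})^n$ has rotation number $n\,\mathrm{rot}(C)\notin\Q/\Z$ and hence no periodic point; in particular it is fixed point free on $C$. Next I would check that $\zeta$ is isolated in $\mathrm{Fix}(\check f^{qn})$, so that the Lefschetz index is defined: if fixed points of $f^{qn}$ lying in $V$ accumulated onto $\zeta$, a subsequence would converge in $S$ to a point of the frontier of $V$ fixed by $f^{qn}$, contradicting Theorem \ref{t.genericconsequences}. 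Consequently $\check f^{qn}$ has no fixed point in a punctured disk neighborhood of $\zeta$; equivalently $\widehat f^{qn}$ is fixed point free on the whole closed collar $S^1\times[0,\epsilon]$ for some $\epsilon>0$.

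The heart of the matter is then the index computation. Working in a chart sending $\zeta$ to $0$ and the collar to $S^1\times[0,1)$ via $(\theta,s)\mapsto s\,e^{2\pi i\theta}$, write $\widehat f^{qn}(\theta,s)=(\Theta(\theta,s),R(\theta,s))$, so that $g:=\Theta(\cdot,0)=\widehat f^{qn}{}_{\vert C}$ is fixed point free and $R(\theta,s)>0$ for $s>0$. The index $i(\check f^{qn},\zeta)$ is the degree of the displacement $\theta\mapsto R(\theta,s)e^{2\pi i\Theta(\theta,s)}-s\,e^{2\pi i\theta}$ on a small circle, a quantity independent of $s\in(0,\epsilon]$ by homotopy invariance (the displacement never vanishes, as there are no fixed points). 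Dividing by $s$ and setting $\rho_s(\theta):=R(\theta,s)/s>0$, this equals $\deg_\theta\bigl(\rho_s\,e^{2\pi i\Theta(\cdot,s)}-e^{2\pi i\cdot}\bigr)$. The key observation is that, by continuity of $\widehat f^{qn}$ on the compact collar, $\Theta(\cdot,s)\to g$ uniformly, so fixed point freeness of $g$ forces $\Theta(\cdot,s)$ to be fixed point free as well for $s$ small. Then $\rho_s\,e^{2\pi i\Theta(\cdot,s)}-e^{2\pi i\theta}$ stays nonzero while one homotopes the positive factor $\rho_s$ to the constant $1$, and afterwards homotopes the fixed point free degree one circle homeomorphism $\Theta(\cdot,s)$, within fixed point free maps (its lift staying in a fixed interval $(k,k+1)$), to a half turn. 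The displacement becomes $\theta\mapsto-2\,e^{2\pi i\theta}$, of degree $1$, whence $i(\check f^{qn},\zeta)=1$ for every $n\geq1$.

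The main obstacle is precisely this computation, and specifically the fact that $\check f^{qn}$ is merely a homeomorphism: there is no radial derivative at $\zeta$, so the ratio $\rho_s$ is genuinely uncontrolled. What unlocks the argument is that the degree of $\theta\mapsto\rho(\theta)\,e^{2\pi i g(\theta)}-e^{2\pi i\theta}$ does not depend on the positive factor $\rho$ as soon as $g(\theta)\neq\theta$ for all $\theta$, since the expression then never vanishes along the homotopy of $\rho$; thus the radial behaviour is irrelevant once one knows that $\Theta(\cdot,s)$ is fixed point free, and this is guaranteed for small $s$ by uniform continuity together with the fixed point freeness on $C$ coming from Theorem \ref{t.pe-genericconsequences}. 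One may alternatively invoke the classical fact that an isolated fixed point whose prime end circle carries no fixed point, and hence no hyperbolic sector, has index $1$.
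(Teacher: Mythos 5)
Your proof is correct and follows essentially the same route the paper intends: Corollary \ref{c.e-genericconsequences} is stated there as an immediate consequence of Theorem \ref{t.pe-genericconsequences}, the implicit argument being exactly yours, namely that the irrational prime end rotation number makes every power $\widehat f^{qn}$ fixed point free on the circle $C$, and collapsing a fixed-point-free boundary circle produces a fixed point of index $1$. Your write-up merely supplies the details the paper leaves unsaid (isolation of $\zeta$ via Theorem \ref{t.genericconsequences}, and the displacement-degree computation), with only a harmless imprecision in calling $\Theta(\cdot,s)$ a homeomorphism --- it need not be injective, but your homotopy argument only uses that it is a continuous, degree-one, fixed point free circle map.
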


Let us conclude this subsection with the following result, proved by Mather \cite{Math} and extended in \cite{X1} and \cite[Corollary 8.9]{KLecN}.
\begin{theorem}\label{t.genericconsequences2} Consider $f\in \cG^r_{\omega}(S)$. The four branches of a hyperbolic periodic point $z$ of $f$ accumulate on $z$ and
 have the same closure in $S$.
\end{theorem}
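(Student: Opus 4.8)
The plan is to reduce to a fixed point and then combine the prime-end results of this subsection with the inclination lemma. Fix $N\ge 1$ so that $g:=f^{N}$ fixes $z$ and leaves each of the four branches invariant (take $N=q$ when $z$ has no reflection and $N=2q$ when it has reflection). Since $g\in\cG^r_{\omega}(S)$ has the same branches at $z$ as $f$, and the closures in question are unchanged, it suffices to prove the statement for $g$. Work in a small linearizing disk $D$ around $z$, write $W^u_{\mathrm{loc}},W^s_{\mathrm{loc}}$ for the local invariant manifolds, and recall that, by (G2), any intersection of a global branch with $W^s_{\mathrm{loc}}$ or $W^u_{\mathrm{loc}}$ is transverse.

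First I would show that each branch accumulates on $z$. Let $\Lambda_u$ be an unstable branch and suppose, for contradiction, that it does not return to $z$: there is a neighbourhood $U\subset D$ of $z$ with $\overline{\Lambda_u}\cap U=W^u_{\mathrm{loc}}\cap U$. Then $W^s_{\mathrm{loc}}\cap U\setminus\{z\}$ is disjoint from $\overline{\Lambda_u}$; since $W^s_{\mathrm{loc}}$ is $g$-invariant and contracted toward $z$ by $g$, the germ of $W^s_{\mathrm{loc}}$ at $z$ lies in a single $g$-invariant complementary component $V$ of $\overline{\Lambda_u}$, and $z$ is accessible from $V$ along this germ. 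Thus $z$ is a fixed point on the frontier of $V$, equivalently a fixed prime end of $\widehat V$ with rational rotation number. This contradicts the prime-end dichotomy under the genericity conditions (Theorem \ref{t.pe-genericconsequences}, with its origin in Mather \cite{Math}); Theorem \ref{t.genericconsequences} forbids the same configuration once $V$ is organised as an invariant regular domain, and Corollary \ref{c.e-genericconsequences} is used to keep track of the Lefschetz indices of the ends. Hence every unstable branch accumulates on $z$, and applying the identical reasoning to $g^{-1}$ settles the stable branches.

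Next I would prove that the four closures coincide, using the same prime-end obstruction uniformly. Set $K_u:=\overline{\Lambda_u}$. If the second unstable branch $\Lambda_u'$ were not contained in $K_u$, then a sub-arc of $\Lambda_u'$ issuing from $z$ would be a free arc of a $g$-invariant complementary component of $K_u$ landing at the fixed point $z$, again producing a fixed accessible prime end of rational rotation number and contradicting Theorem \ref{t.pe-genericconsequences}; hence $\overline{\Lambda_u}=\overline{\Lambda_u'}$. The same argument applied to each stable branch shows $\Lambda_s,\Lambda_s'\subset K_u$, so $W^s(z)\cup W^u(z)\subset K_u$, and symmetrically $W^s(z)\cup W^u(z)\subset K_s:=\overline{\Lambda_s}$. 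Therefore all four branches share the common closure $K_u=K_s$. When transverse homoclinic crossings are present, the inclination lemma \cite{Pal} recovers $W^u_{\mathrm{loc}}\subset K_u$ more directly by $C^1$-propagating a returning arc of $\Lambda_u$ (transverse to $W^s_{\mathrm{loc}}$ by (G2)) onto $W^u_{\mathrm{loc}}$, and saturating under $g$.

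The main obstacle is the accumulation step, and more precisely making the prime-end argument rigorous: one must identify a $g$-invariant complementary component $V$ abutting $z$ and verify that $z$ gives a genuine accessible periodic prime end to which Theorem \ref{t.pe-genericconsequences} applies. The delicate technical point is that the complementary components of the continuum $\overline{\Lambda_u}$ need not be of finite type, so to invoke Theorem \ref{t.genericconsequences} literally one either passes to an approximating invariant regular domain or works with the prime-end compactification of $V$ directly as in \cite{Math}. A secondary subtlety is that accumulation on $z$ need not arise from transverse homoclinic intersections (which are unavailable in general at this stage, cf. Theorem \ref{t.numberhyp}); the one-sided, tangential-accumulation case cannot be handled by the inclination lemma and must be treated purely topologically through the prime-end obstruction, with (G2) used only to guarantee that whatever genuine crossings do occur are transverse, so that the $C^1$-propagation of closures remains clean.
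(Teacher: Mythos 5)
First, a point of comparison: the paper does not prove Theorem \ref{t.genericconsequences2} at all; it is quoted from Mather \cite{Math}, as extended in \cite{X1} and \cite[Corollary 8.9]{KLecN}, so your proposal has to be judged against those references, whose general strategy (prime ends plus the generic exclusion of periodic points on frontiers of invariant domains) you correctly identify. Your accumulation step is in fact sound, and simpler than you believe, once one supplies the lemma that you flag and then abandon: a connected component $V$ of the complement of the continuum $\overline{\Lambda_u}$ automatically has \emph{connected} complement in $S$ (the frontier of every other complementary component is contained in $\overline{\Lambda_u}$), and an open connected subset of a closed surface with connected complement has genus at most $g$ and finitely many ends, hence is of finite type; so $V$ \emph{is} a regular domain. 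This fact is used silently in the paper itself, in the proof of Proposition \ref{p.separationconsequences}, where the components of $S\setminus K(\kappa_j)$ are asserted to be periodic regular domains. With it, step 1 needs neither accessibility nor prime ends: $V$ is a regular domain invariant by the power $g=f^{N}$ and $z$ is a periodic point on its frontier, contradicting Theorem \ref{t.genericconsequences} directly. As written, however, your proof is incomplete exactly there: you assert that the components ``need not be of finite type'' (false) and defer the repair to an unspecified approximation or to Mather's machinery. Note also that your parenthetical ``equivalently a fixed prime end with rational rotation number'' is not an equivalence by definition; that an accessible fixed point forces a rational prime-end rotation number is itself one of the theorems of \cite{KLecN} --- fortunately you do not need it.

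The second step --- that the four closures coincide --- contains a genuine error, and it is the hard half of the theorem. You claim that if $\Lambda_u'\not\subset K_u:=\overline{\Lambda_u}$, then some sub-arc of $\Lambda_u'$ \emph{issuing from $z$} is a free arc of a complementary component of $K_u$. This dichotomy is false: $E=\Lambda_u'\cap K_u$ is a closed subset of $\Lambda_u'\simeq(0,+\infty)$ invariant under the expanding action of $g$ along the branch, and any nonempty such set accumulates at the parameter $0$, that is, at $z$. So besides the two cases you consider ($E=\Lambda_u'$ and $E=\emptyset$) there is a third one --- for instance $E$ a single $g$-orbit --- in which points of $K_u$ on $\Lambda_u'$ accumulate at $z$, no initial sub-arc of $\Lambda_u'$ is free, and the access arc you want simply does not exist; nothing in your proposal excludes this case. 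The inclination lemma cannot rescue it: accumulation of $\Lambda_u$ on a point of $\Lambda_u'$ is not an intersection of a stable manifold with an unstable one, so (G2) gives no transversality, and at this stage no transverse homoclinic point is available. Handling precisely this situation (free arcs and orbits distributed among possibly infinitely many complementary domains, controlled through prime-end rotation numbers) is the substance of Mather's argument and of \cite[Section 8]{KLecN}; without it you have established, at best, the accumulation statement, not the equality of the four closures.
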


\subsection{Homoclinic classes}\label{ss:homoclinic classes}

Consider $f\in \cG^r_{\omega}(S)$. We will say that two hyperbolic periodic points $z$ and $z'$ of
$f$  are {\it equivalent} if the branches of $z$ and the branches of $z'$ have the same closure. One gets an equivalence relation on the set of hyperbolic periodic points.  We will denote $\cE(f)$ the set of equivalence classes and for every $\kappa\in\cE(f)$, we will write $K(\kappa)$ for the closure of a branch of an element $z\in\kappa$. The branches of elements $z\in\kappa$ will be called {\it branches of $\kappa$}. The map $f$ acts naturally on $\cE(f)$ as a bijection and every orbit is finite (because $f^q(\kappa)=\kappa$ if $\kappa$ contains a fixed point of $f^q$) and so one can define the period of $\kappa$ as the cardinal of its orbit. Of course $\cE(f^q)=\cE(f)$, for every $q\geq 2$.

\medskip

We will remind some results about classes  and regular domains that are proved in \cite{LecSa}. The proofs generally use Theorem \ref{t.genericconsequences} and Theorem \ref{t.genericconsequences2}.

\begin{proposition}\label{p.regulardomain-class}
Consider $f\in \cG^r_{\omega}(S)$. If $V$ is a periodic regular domain of $f$, then every class $\kappa\in \cE(f)$ is included in $V$ or disjoint from $\overline V$. In the first situation the branches of $\kappa$ are all included in $V$, in the second situation they are all disjoint from $\overline V$.\end{proposition}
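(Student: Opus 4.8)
The plan is to build everything on the two structural facts proved just above: that a periodic point of $f$ can never lie on the frontier of a regular domain invariant under a power of $f$ (Theorem \ref{t.genericconsequences}), and that every branch of a hyperbolic periodic point accumulates on that point and has closure equal to the common set $K(\kappa)$ (Theorem \ref{t.genericconsequences2}). Since $\cE(f^q)=\cE(f)$ and $\cG^r_\omega(S)$ is stable under taking nonzero powers, I would first replace $f$ by the power that fixes $V$, so that we may assume $f(V)=V$. Then both $V$ and the exterior $S\setminus\overline V$ are open and $f$-invariant (hence $f^{-1}$-invariant).

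The core of the argument is a confinement observation: if $U$ is an open $f$-invariant set and $z\in U$ is a hyperbolic periodic point of period $q$, then $W^s(z)\cup W^u(z)\subset U$. Indeed, for $z'\in W^s(z)$ one has $f^{nq}(z')\to z\in U$; since $U$ is open, the iterates eventually lie in $U$, and since $U$ is invariant one applies $f^{-nq}$ to conclude $z'\in U$, the unstable case being identical with $f^{-1}$. Now take any hyperbolic periodic point $z\in\kappa$. By Theorem \ref{t.genericconsequences} it cannot lie on $\partial V=\overline V\setminus V$, so either $z\in V$ or $z\in S\setminus\overline V$; applying the confinement observation with $U=V$ or $U=S\setminus\overline V$ shows that all four branches of $z$ lie in $V$, respectively are disjoint from $\overline V$.

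It then remains to propagate this alternative from one point to the whole class. If some $z_0\in\kappa$ lies in $V$, then a branch $\Lambda$ of $z_0$ satisfies $\Lambda\subset V$, whence $K(\kappa)=\overline\Lambda\subset\overline V$ by Theorem \ref{t.genericconsequences2}. For an arbitrary $z'\in\kappa$ and any branch $\Lambda'$ of $z'$, the same theorem gives $z'\in\overline{\Lambda'}=K(\kappa)\subset\overline V$; being a periodic point off the frontier, $z'$ must lie in $V$, and the confinement observation puts its branches in $V$ as well. Symmetrically, if some $z_0\in\kappa$ lies in $S\setminus\overline V$, then $K(\kappa)=\overline\Lambda\subset\overline{S\setminus\overline V}$, a set disjoint from the open set $V$; hence every $z'\in\kappa$ lies in $K(\kappa)\subset S\setminus V$, is off the frontier, and so lies in $S\setminus\overline V$ together with its branches. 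Since Theorem \ref{t.genericconsequences} forces each element of $\kappa$ into exactly one of the two alternatives, one of them holds uniformly over the class, which is precisely the claim.

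I expect the only genuinely delicate bookkeeping to be the exterior case, namely checking that $\overline{S\setminus\overline V}\cap V=\emptyset$ (immediate: any $p\in V$ has a neighborhood inside $V\subset\overline V$, disjoint from $S\setminus\overline V$), so that $K(\kappa)$ really avoids $V$ and the periodic points of $\kappa$ can be located by Theorem \ref{t.genericconsequences}. The dynamical substance of the statement is carried entirely by the two cited theorems: without the absence of periodic points on $\partial V$, a branch could a priori straddle the frontier and the dichotomy would break down.
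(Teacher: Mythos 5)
Your argument is correct, and it follows exactly the route the paper indicates for this result: the paper itself defers the proof to \cite{LecSa}, noting that the proofs there ``generally use Theorem \ref{t.genericconsequences} and Theorem \ref{t.genericconsequences2}'', which is precisely what you do (frontier free of periodic points, plus the common closure of branches, plus the elementary invariant-open-set confinement of stable and unstable sets). No gaps: the power-taking reduction, the pointwise dichotomy, and the propagation over the class via $K(\kappa)$ are all justified.
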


\begin{corollary}\label{c.connectedcomponent}
Consider $f\in \cG^r_{\omega}(S)$. If $V$, $V'$ are periodic regular domains of $f$ that both contain a class $\kappa\in \cE(f)$, then $\kappa$ is included in a connected component $V''$ of $V\cap V'$ and $V''$ is a periodic regular domain.\end{corollary}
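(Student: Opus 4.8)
The plan is to combine Proposition~\ref{p.regulardomain-class} with the fact recalled in Section~\ref{ss.regular} that every connected component of the intersection of finitely many regular domains is again a regular domain. First I would record that $V\cap V'$ is open and that each of its connected components is a regular domain. Since by hypothesis $\kappa\subset V$ and $\kappa\subset V'$, and since Proposition~\ref{p.regulardomain-class} guarantees that in this situation all branches of $\kappa$ lie in $V$ and in $V'$, both the periodic points of $\kappa$ and all their branches are contained in $V\cap V'$.

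The central point is to check that $\kappa$ meets a single component $V''$ of $V\cap V'$. Fix a point $z\in\kappa$. The set $W^s(z)\cup W^u(z)$ is connected, both immersed lines passing through $z$, and, being the union of $z$ and the four branches of $z$, it is contained in $V\cap V'$; hence it lies in a single component $V''$. I would then transport this to an arbitrary $z'\in\kappa$. By Theorem~\ref{t.genericconsequences2} the branches of $z'$ accumulate on $z'$, and all branches of $\kappa$ share the common closure $K(\kappa)$, so $z'\in K(\kappa)=\overline{\Lambda}$ for any branch $\Lambda$ of $z$; choosing such a $\Lambda\subset V''$ gives $z'\in\overline{V''}$. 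Since $z'\in V\cap V'$ and the components of the open set $V\cap V'$ are open and pairwise disjoint, a point of $V\cap V'$ lying in $\overline{V''}$ must lie in $V''$ itself; thus $z'\in V''$, and the branches of $z'$, being connected to $z'$ inside $V\cap V'$, also lie in $V''$. Therefore $\kappa\subset V''$ together with all its branches, and in particular $\kappa$ meets no other component.

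It remains to see that $V''$ is periodic. Let $p$ be the period of the class $\kappa$, so $f^p(\kappa)=\kappa$, and let $a,b$ be periods of the regular domains $V$ and $V'$. Setting $m$ equal to a common multiple of $a$, $b$ and $p$, the homeomorphism $f^m$ fixes both $V$ and $V'$, hence preserves $V\cap V'$ and permutes its connected components, while $f^m(\kappa)=\kappa$. Then $f^m(V'')$ is again a component of $V\cap V'$ and it contains $f^m(\kappa)=\kappa$; since $\kappa$ is contained in the unique component $V''$, we conclude $f^m(V'')=V''$. Thus $V''$ is a periodic regular domain.

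The step I expect to be the main obstacle is the single-component claim of the second paragraph, namely connecting two distinct periodic points of $\kappa$ to the same component of $V\cap V'$. This is where the definition of the equivalence relation (equality of the closures of branches) and Theorem~\ref{t.genericconsequences2} are genuinely used, combined with the elementary but essential topological remark that a point of the open set $V\cap V'$ lying in the closure of one component cannot belong to another.
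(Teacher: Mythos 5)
Your proof is correct and takes essentially the route the paper intends: the paper states this corollary without proof (it is recalled from \cite{LecSa}), and the expected deduction is precisely your combination of Proposition~\ref{p.regulardomain-class}, the fact from Section~\ref{ss.regular} that connected components of intersections of regular domains are regular domains, and Theorem~\ref{t.genericconsequences2} (accumulation of branches and common closure $K(\kappa)$) to force all of $\kappa$ into a single component, followed by the permutation argument with a common multiple of the periods. The step you flagged as delicate is handled correctly, including the topological point that a point of the open set $V\cap V'$ lying in $\overline{V''}$ must belong to the open component $V''$.
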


\begin{proposition}\label{p.genericconsequences}
Fix $f\in \cG^r_{\omega}(S)$, $\kappa\in \cE(f)$ and $z'\in\mathrm{per}_h(f)$. If $z'$ belongs to $K(\kappa)$,  then it belongs to $\kappa$. \end{proposition}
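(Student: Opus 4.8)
The plan is to prove that the branches of $z'$ have the same closure as the branches of any $z\in\kappa$, which is precisely the relation $z'\sim z$, hence $z'\in\kappa$. First I would reduce to fixed points: replacing $f$ by a suitable power $f^q$ (which leaves $\cE(f)$, the branches, and their closures unchanged, and satisfies $\cE(f^q)=\cE(f)$) I may assume that $z$ and $z'$ are fixed with no reflection, so that each of their four branches is invariant. By Theorem \ref{t.genericconsequences2} the set $K(\kappa)$ is simultaneously the closure of every stable and every unstable branch of $z$; in particular $K(\kappa)=\overline{W^u(z)}=\overline{W^s(z)}$ is a closed $f$-invariant set, and the hypothesis reads $z'\in\overline{W^u(z)}\cap\overline{W^s(z)}$. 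Writing $K'$ for the common closure of the branches of $z'$ (again well defined by Theorem \ref{t.genericconsequences2}), the goal becomes $K'=K(\kappa)$.

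The core step is a local analysis near the saddle $z'$. Fix a box $B$ around $z'$ in which the local manifolds $W^s_{\mathrm{loc}}(z')$ and $W^u_{\mathrm{loc}}(z')$ are the two coordinate segments, and let $\Lambda_u$ be an unstable branch of $z$, so that $\overline{\Lambda_u}=K(\kappa)\ni z'$. Picking points $w_n\in\Lambda_u$ with $w_n\to z'$ and using that $\Lambda_u$ is invariant, the sub-arcs of $\Lambda_u\cap B$ through $w_n$ enter $B$ close to $W^s_{\mathrm{loc}}(z')$ and, by the inclination lemma (\cite{Pal}), leave $B$ close to $W^u_{\mathrm{loc}}(z')$. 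As $w_n\to z'$ these arcs hug the corner $z'$, and an elementary connectedness (intermediate value) argument shows that their traces sweep out the whole local stable and unstable segments. This yields that $\overline{\Lambda_u}$ contains an entire stable branch $\Lambda'_s$ and an entire unstable branch $\Lambda'_u$ of $z'$. The delicate point — and what I expect to be the main obstacle — is to upgrade this \emph{accumulation} of $\Lambda_u$ on $\Lambda'_s$ to a genuine \emph{intersection} $\Lambda_u\cap\Lambda'_s\neq\emptyset$; once such a point exists, condition (G2) makes it automatically transverse. Establishing the crossing is where the area-preserving structure and the generic hypotheses enter (as in \cite{LecSa}): a branch accumulating on $W^s(z')$ strictly from one side without crossing it is incompatible with the way the two invariant curves are forced together near $z'$ under a conservative map.

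Granting a transverse heteroclinic point $x\in\Lambda_u\cap\Lambda'_s\subset W^u(z)\cap W^s(z')$, the conclusion follows from two applications of the inclination lemma. Applied to a small arc of $W^u(z)$ through $x$ (transverse to $W^s(z')$), the forward iterates stay in $\Lambda_u$ and accumulate on $W^u(z')$, so $W^u(z')\subset\overline{W^u(z)}=K(\kappa)$ and hence $K'\subset K(\kappa)$. Applied to $f^{-1}$ and a small arc of $W^s(z')$ through $x$ (transverse to $W^u(z)$), the backward iterates stay in $\Lambda'_s$ and accumulate on $W^s(z)$, so $W^s(z)\subset\overline{W^s(z')}=K'$ and hence $K(\kappa)\subset K'$. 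Therefore $K'=K(\kappa)$, the branches of $z'$ and those of $z$ share a common closure, and $z'\sim z$, i.e.\ $z'\in\kappa$.

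As a consistency check, Proposition \ref{p.regulardomain-class} already constrains the situation: if $z'$ belonged to a class $\kappa'\neq\kappa$, then every periodic regular domain $V$ containing $\kappa$ would also have to contain $\kappa'$, since $z'\in K(\kappa)\subset\overline V$ prevents $\kappa'$ from being disjoint from $\overline V$. This does not by itself force $\kappa'=\kappa$, which is exactly why the dynamical crossing argument above, and not merely the regular-domain machinery, is needed to settle the proposition.
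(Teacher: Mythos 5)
Your plan---prove $z'\sim z$ by first producing a transverse intersection $\Lambda_u\pitchfork\Lambda'_s\neq\emptyset$ and then applying the inclination lemma twice---cannot work under the hypotheses of this proposition, and the step you yourself flag as ``delicate'' is not merely delicate but false in general. The proposition is stated for \emph{every} $f\in\cG^r_{\omega}(S)$, with no assumption on the number of periodic points, and in this generality equivalent hyperbolic periodic points need not have intersecting branches: intersections only become available under the extra hypothesis that $f$ has more than $2g-2$ periodic points, which is exactly the content of Theorem \ref{t.numberhyp} and Proposition \ref{p.localLefschetz} (``homoclinic classes''), not assumed here. The paper's own example of a map whose classes are not homoclinic---the time-one map of the flow in a minimal direction on a translation surface in the principal stratum (see \cite{LecSa}, \cite{Lec})---is a concrete obstruction to your strategy: there all $2g-2$ saddles form a single class $\kappa$ with $K(\kappa)=S$ (every branch is dense), so every hyperbolic periodic point $z'$ lies in $K(\kappa)$ and the conclusion $z'\in\kappa$ holds, yet the branches are pairwise disjoint flow lines, so $\Lambda_u\cap\Lambda'_s=\emptyset$ for every choice of branches. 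Hence no argument, conservative or not, can ``upgrade accumulation to crossing'' here; the crossing is strictly stronger than the statement to be proved and simply fails. A secondary problem in the same place: invoking the inclination lemma to claim that the arcs of $\Lambda_u\cap B$ ``leave $B$ close to $W^u_{\mathrm{loc}}(z')$'' is circular, since that lemma requires an arc transverse to $W^s(z')$ to start from, i.e.\ precisely the intersection you are trying to manufacture. (Your final paragraph is correct as \emph{conditional} reasoning, and your consistency check via Proposition \ref{p.regulardomain-class} is fine; the failure is concentrated entirely in the crossing step.)

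For comparison: the paper does not prove this proposition at all---it is recalled from \cite{LecSa}, and, as stated at the beginning of Section \ref{ss:homoclinic classes}, the proofs there rest on the ``soft'' machinery of Theorem \ref{t.genericconsequences} (no periodic points on the frontier of a periodic regular domain, via prime-end rotation numbers) and Theorem \ref{t.genericconsequences2} (the four branches share one closure), combined with dichotomies of the type of Proposition \ref{p.regulardomain-class} for the regular domains complementary to the sets $K(\kappa)$. In other words, the intended argument forces the closures $K(\kappa)$ and $K(\kappa')$ to coincide by global, prime-end/regular-domain considerations, never by exhibiting heteroclinic points; this is unavoidable, since in the translation-surface example above the conclusion is true while heteroclinic points do not exist.
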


The next result is a very slight improvment of Corollary 3.3 of \cite{LecSa}, with a similar proof . 

\begin{proposition}\label{p.separationconsequences}
Consider $f\in \cG^r_{\omega}(S)$. Let $(\kappa_i)_{1\leq i\leq p}$ be a sequence of different classes in ${\mathcal E}(f)$ and $(V_i)_{1\leq i\leq p}$ a sequence of periodic regular domains of $f$, such that $\kappa_i\subset V_i$, if $1\leq i\leq p$. Then one can find a family $(V'_i)_{1\leq i\leq p}$ of pairwise disjoint periodic regular domains such that $\kappa_i\subset V'_i\subset V_i$ if $ 1\leq i\leq p$.\end{proposition}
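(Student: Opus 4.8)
The plan is to reduce the whole statement to the separation of \emph{two} distinct classes and to let Corollary~\ref{c.connectedcomponent} do all the bookkeeping. Concretely, suppose I have already proved the two-class version: for distinct $\kappa\ne\kappa'$ lying in periodic regular domains, there exist disjoint periodic regular domains $A\ni\kappa$ and $A'\ni\kappa'$. I would then run this construction for every pair $(i,j)$ with $i\ne j$, producing disjoint periodic regular domains $A_i^{ij}\ni\kappa_i$ and $A_j^{ij}\ni\kappa_j$, and define $V_i'$ to be the connected component through $\kappa_i$ of the intersection of $V_i$ with all the $A_i^{ij}$, $j\ne i$. By iterating Corollary~\ref{c.connectedcomponent}, $V_i'$ is a periodic regular domain with $\kappa_i\subset V_i'\subset V_i$. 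For $i\ne j$ one has $V_i'\subset A_i^{ij}$ and $V_j'\subset A_j^{ij}$, and these two sets are disjoint, so $V_i'\cap V_j'=\emptyset$. This produces the required family and, in particular, absorbs the only genuinely new feature of the statement compared with Corollary~3.3 of \cite{LecSa}, namely the constraint $V_i'\subset V_i$; this is what makes it a ``slight improvement with a similar proof''.

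It remains to separate two distinct classes $\kappa\ne\kappa'$, each contained in a periodic regular domain. Here Proposition~\ref{p.regulardomain-class} gives a dichotomy: either $\kappa'$ is contained in the domain around $\kappa$, or it is disjoint from its closure, and symmetrically. If one of the ``disjoint'' alternatives holds, the two classes are already separated on one side and I only have to cut one domain away from the closure of the other. The remaining, and genuinely hard, configuration is when each class lies in the domain of the other: then both classes lie in the intersection of the two domains, and by Corollary~\ref{c.connectedcomponent} each lies in a periodic regular connected component of that intersection; if these components are distinct we are done, so everything is reduced to separating two distinct classes contained in one common periodic regular domain $W$.

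The crux --- and the step I expect to be the main obstacle --- is thus to split a single periodic regular domain $W$ between two distinct classes $\kappa,\kappa'\subset W$. The key structural input is that distinct classes contain no common hyperbolic periodic point, so Proposition~\ref{p.genericconsequences} gives $K(\kappa)\cap\mathrm{per}_h(f)=\kappa$ and $K(\kappa')\cap\mathrm{per}_h(f)=\kappa'$; hence neither class meets the other's branch-closure, and, together with Theorem~\ref{t.genericconsequences2} and the finer structure theory of \cite{LecSa}, this should promote to the statement that the compact invariant sets $K(\kappa)$ and $K(\kappa')$ are disjoint. Granting disjoint compact invariant branch-closures inside $W$, I would enclose them in disjoint open neighborhoods, trim these to open sets of finite type, and then enforce the two defining properties of a periodic regular domain: periodicity, by intersecting over the finite $f$-orbit and passing, via Corollary~\ref{c.connectedcomponent}, to the component through the class; and the absence of isolated points in the complement, by invoking Theorem~\ref{t.genericconsequences}, which forbids periodic points --- in particular the prime ends of the constructed domains --- on their frontier. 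The delicate points I anticipate are establishing the disjointness of the branch-closures cleanly and checking that the cutting can be carried out while keeping the domains of finite type and inside the ambient one; once these are settled, the pairwise statement, and hence the full proposition, follows.
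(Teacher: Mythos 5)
Your reduction to the two-class case and the final bookkeeping with Corollary \ref{c.connectedcomponent} are fine, but the step you yourself flag as the crux --- promoting ``$\kappa$ does not meet $K(\kappa')$'' to ``$K(\kappa)\cap K(\kappa')=\emptyset$'' --- is a genuine gap, and not one you can expect to fill: none of the quoted results gives disjointness of branch closures, and it is most likely false in general. Proposition \ref{p.genericconsequences} only controls the \emph{hyperbolic periodic points} lying in $K(\kappa')$; it says nothing about non-periodic accumulation points of the branches. Two distinct classes can perfectly well accumulate on a common invariant set (think of two instability zones lying on either side of a common invariant curve: branches on both sides accumulate on that curve, so both closures contain it). Since your whole two-class separation --- enclose $K(\kappa)$ and $K(\kappa')$ in disjoint open neighborhoods, then trim --- rests on this disjointness, the construction breaks at its first step. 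The trimming itself is also not innocent: an invariant open neighborhood of a compact invariant set need not contain an invariant open set of finite type, so ``trim to finite type and enforce the defining properties of a periodic regular domain'' would require an argument that your sketch does not supply.

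The paper's proof avoids the issue entirely by separating each class not \emph{from} the closure $K(\kappa_j)$ of the other class but \emph{by} that closure: it uses the fact, proved in \cite{LecSa}, that every connected component of $S\setminus K(\kappa_j)$ is a periodic regular domain. Since $\kappa_i\cap K(\kappa_j)=\emptyset$ by Proposition \ref{p.genericconsequences}, Proposition \ref{p.regulardomain-class} places $\kappa_i$ (with all its branches) inside one component $W_i^j$ of $S\setminus K(\kappa_j)$, and this component is automatically disjoint from $\kappa_j$, because $\kappa_j\subset K(\kappa_j)$ (branches accumulate on their own periodic point, Theorem \ref{t.genericconsequences2}). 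No disjointness of $K(\kappa_i)$ and $K(\kappa_j)$ is ever needed: the two closures may well meet, but only along the frontiers of these components, which the construction never touches. Pairwise disjointness is then obtained by induction --- $W_i$ is the component, containing $\kappa_i$, of $\bigl(\bigcap_{j>i}W_i^j\bigr)\setminus\bigl(\bigcup_{j<i}\overline{W_j}\bigr)$, the class $\kappa_i$ being disjoint from each $\overline{W_j}$ by Proposition \ref{p.regulardomain-class} again --- and one finally intersects with $V_i$. This is essentially the bookkeeping you describe, but it rests on a separation mechanism that your proposal lacks.
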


\begin{proof}  Fix $1\leq i<j\leq p$. Every connected component of $S\setminus K(\kappa_j)$ is a periodic regular domain and we know by
Proposition  \ref{p.genericconsequences} that $\kappa_i\cap K(\kappa_j)=\emptyset$. So, by Proposition \ref{p.regulardomain-class}, there exists a connected component $W_i^j$ of $S\setminus K(\kappa_j)$ that contains $\kappa_i$. We define inductively a family $(W_i)_{1\leq i\leq p}$ of pairwise disjoint periodic regular domains such that $\kappa_i\subset W_i$ in the following way;
\begin{itemize}
\item  $W_1$ is the connected component of $\bigcap_{1< j\leq p} W_1^j$ that contains $\kappa_1$:
\item for every $i>1$,  $W_i$ is the connected component of $$ \left( \bigcap_{i<j\leq p} W_i^j\right)\setminus \left(\bigcup_{1\leq j<i} \overline{W_j}\right)$$ that contains $\kappa_i$.
\end{itemize}
To finish the proof, it is sufficient to define $V'_i$ has being the connected component of $V_i\cap W_i$ that contains $\kappa_i$.
\end{proof}

Remind that the genus of an open set $V\subset S$, denoted $\mathrm{gen}(V)$, is the largest integer $s$ such that we can find a family of simple loops $(\lambda_i)_{0\leq i<2s}$ satisfying:
\begin{itemize}
\item $\lambda_{2j}$ and $\lambda_{2j+1}$ intersect in a unique point;
\item $[\lambda_{2j}]\wedge[\lambda_{2j+1} ]=1$;
\item $\lambda_i\cap \lambda_{i'}=\emptyset$, if $i\not=i'$ and $\{i,i'\}$ is not a set $\{2j, 2j+1\}$.
\end{itemize}
 Let us define now the {\it genus of a class $\kappa \in\cE(f)$}, where $f\in\cG^r_{\omega}(S)$, as being the integer $\mathrm{gen}(\kappa)\in\{0,\dots, g\}$ uniquely defined by the following conditions:
\begin{enumerate}
\item $\kappa$ is contained in a periodic regular domain of genus $\mathrm{gen}(\kappa)$;
\item  $\kappa$ is not contained in a periodic regular domain of genus $<\mathrm{gen}(\kappa)$.
\end{enumerate}

The function $\kappa\mapsto \mathrm{gen}(\kappa)$ satisfies some additive properties. For example, in the statement of Proposition \ref{p.separationconsequences}, it holds:
$$\sum_{i=1}^p\mathrm{gen}(\kappa_i)\leq \sum_{i=1}^p\mathrm{gen}(V_i)\leq \mathrm{gen}(V).$$

 Let us conclude with the following proposition, which is the fundamental result of \cite{LecSa}:

\begin{proposition}\label{p.localLefschetz}
Consider $f\in \cG^r_{\omega}(S)$. If $f$ has more than $2g-2$ periodic points. Then
\begin{itemize}
\item every class $\kappa\in \cE(f)$ is infinite;
\item if $z$ and $z'$ are equivalent hyperbolic periodic points (possibly equal), then every stable branch of $z$ meets every unstable branch of $z'$.
\end{itemize}
\end{proposition}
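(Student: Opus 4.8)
The plan is to deduce the first assertion from the second, so that the real content lies in the branch-intersection statement. Granting the second bullet, apply it with $z'=z$, which is explicitly allowed: every stable branch of $z$ meets every unstable branch of $z$. By (G2) such an intersection is transverse, so $z$ has a transverse homoclinic intersection, and by Smale's theorem \cite{Sm} there is a horseshoe, hence infinitely many hyperbolic periodic points. All of these lie in $K(\kappa)$, so by Proposition \ref{p.genericconsequences} they belong to $\kappa$. Thus $\kappa$ is infinite, and since $\kappa$ was arbitrary, every class is infinite. The whole weight of the proposition therefore falls on the second bullet, which I would attack in two stages.

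The first stage, and the crux, is to prove that under the hypothesis no class is \emph{crossing-free}, i.e. every class contains at least one transverse intersection between one of its stable branches and one of its unstable branches. Here I would argue by contradiction through a localized Lefschetz count. Fix a class $\kappa$ of period $p$ and, using Proposition \ref{p.separationconsequences} together with Theorem \ref{t.genericconsequences} (which keeps periodic points off the frontier of a periodic regular domain), choose a periodic regular domain $V$ whose only periodic points are those of $\kappa$; this is the minimal regular neighborhood of $K(\kappa)$. Applying the Lefschetz formula on the end compactification $\check V$ to a power $\check f^{pn}$ that fixes every end, Corollary \ref{c.e-genericconsequences} gives index $+1$ at each end, while each point of $\kappa$ contributes $\pm1$ according to its type. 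If $\kappa$ were crossing-free, then by (G2) its branches would be pairwise disjoint off the periodic points, which constrains the topology of $\check V$ and hence the homological Lefschetz number of $\check f^{pn}$; the resulting index identity has no slack, and, balanced against the global Lefschetz identity and the enclosure of any elliptic or reflection point by branches of a hyperbolic point (G3), it is incompatible with $f$ having strictly more than $2g-2$ periodic points. Hence some branch of $\kappa$ crosses another, transversally by (G2).

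The second stage upgrades one transverse intersection to the full statement via the inclination ($\lambda$-)lemma \cite{Pal}. By Theorem \ref{t.genericconsequences2} and the definition of equivalence, all branches of $\kappa$ share the closure $K(\kappa)$; in particular, for all $z,z'\in\kappa$, every unstable branch of $z'$ accumulates on $z$ and every stable branch of $z$ accumulates on $z'$. The inclination lemma converts a transverse heteroclinic intersection into $C^1$-accumulation of the relevant invariant manifold onto the branch it limits on, and thereby makes transverse connections transitive: if an unstable manifold meets a stable manifold transversally and that unstable manifold in turn is met transversally, the composite connection is again transverse. Starting from the single crossing produced in the first stage and using that the shared closure $K(\kappa)$ makes the graph of transverse connections on $\kappa$ connected, I would propagate the crossing to every ordered pair consisting of a stable and an unstable branch in the class, which is exactly the assertion.

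The main obstacle is the first stage: making the localized Lefschetz count rigorous for a crossing-free class. The delicate point is that, by Theorem \ref{t.genericconsequences2}, branches accumulate on the periodic points, so $K(\kappa)$ is not literally a finite graph; one must construct the minimal regular neighborhood carefully, compute the action of $\check f^{pn}$ on its homology, and verify that the end indices and the hyperbolic indices together leave no room for any additional periodic point anywhere on $S$. This is precisely the Lefschetz formula for minimal regular neighborhoods that underlies \cite{LecSa}. By comparison, the inclination-lemma propagation of the second stage is standard, the only care being to keep every intersection transverse, which (G2) guarantees.
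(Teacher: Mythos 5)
First, a structural remark: the paper you are working from does not prove this proposition at all. It is stated as ``the fundamental result of \cite{LecSa}'' and imported wholesale; nothing in Section \ref{ss:homoclinic classes} constitutes a proof of it. So your proposal has to be judged as an attempt at the main theorem of \cite{LecSa} itself, which is a long and delicate argument. Within it, your reduction of the first bullet to the second is correct and standard: a transverse homoclinic point for $z$ gives a horseshoe, the $\lambda$-lemma places the horseshoe's periodic points in $K(\ka)$, and Proposition \ref{p.genericconsequences} then puts them in $\ka$. The two stages carrying ``the whole weight'', however, both have genuine gaps, not merely missing details.

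In stage one, the very first step is unjustified and in general false: there need not exist a periodic regular domain $V$ whose only periodic points are those of $\ka$. Minimality of a regular neighborhood, as defined in this paper, is a \emph{homological} condition ($\iota_*(H_1(V,\R))=\iota_*(H_1(\ka,\R))$), not a dynamical one; other classes and elliptic periodic points can accumulate on $K(\ka)$ and therefore enter every regular neighborhood of $\ka$ (this is why Lemma \ref{l: lefschetzformula} sums over \emph{all} fixed points in $V$). And the punchline of your count --- ``the resulting index identity has no slack'' --- is asserted, never derived; that derivation is precisely the content of \cite{LecSa}. Stage two contains the more serious error. You propagate a single crossing to all pairs of branches using the claim that the shared closure $K(\ka)$ ``makes the graph of transverse connections on $\ka$ connected''. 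Transitivity of transverse connections via the inclination lemma is fine, but connectivity is exactly what must be proved: having the same closure does not imply crossing, because accumulation of a branch on a point does not force a transverse (or any) intersection with its stable manifold. The paper's own example of non-homoclinic classes --- the time-one map of the flow in a minimal direction on a translation surface --- makes this vivid: all branches of all saddles are dense, hence form a single class with common closure $S$, yet no two branches cross. That map has exactly $2g-2$ periodic points, so it is excluded by the hypothesis; but it shows the hypothesis must be used globally to force crossings between \emph{every} stable/unstable pair, and cannot be spent once to produce one crossing and then replaced by soft accumulation arguments. As written, even granting stage one, your stage two does not deliver the second bullet.
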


In this situation, we will say that $f$ has {\it  homoclinic classes}. In that case, two hyperbolic periodic points $z$, $z'$ are equivalent if and only if a stable branch of $z$ meets an unstable branch of $z'$. We can classify the diffeomorphisms $f\in\cG^r_{\omega}(S)$ which have not homoclinic classes. First, every periodic point is hyperbolic. Moreover in the case where $S$ is the $2$-torus, $f$ is periodic point free and must be isotopic either to a Dehn twist map or to the identity. In the case where $g\geq 2$, there is a power of $f$ that is isotopic to the identity. The classical example is given by the time one map of the flow in a minimal direction for a translation surface in the principal stratum (see \cite{LecSa} or \cite{Lec}). 

The following simple fact will be used in the article: if $\pi:S'\to S$ is a finite covering and $f'$ is a lift to $S'$ of a diffeomorphism $f$ of $S$, then $f\in \cG^r_{\omega}(S)$ if and only if $f'\in\cG^r_{\pi^*(\omega)}(S')$. Moreover $f$ has homoclinic classes if and only if $f'$ has homoclinic classes.

\subsection{Minimal regular neighborhoods}\label{ss:Minimal regular neighborhoods}

The notion of genus of a class was important in \cite{LecSa}. In the present article, we will use a finer object.  If $V$ is an open set of $S$, we will consider $\iota_*(H_1(V,\R))\subset H_1(S,\R)$, where $\iota: V\to S$ is the inclusion map.
Fix $f\in \cG^r_{\omega}(S)$ and $\kappa\in{\mathcal E}(f)$. Using Corollary \ref{c.connectedcomponent}, one can define a subspace $\iota_*(H_1(\kappa,\R))$ of $H_1(S,\R)$ by the following conditions:
\begin{itemize}
\item there exists a periodic regular domain $V$ containing $\kappa$ such that $\iota_*(H_1(V,\R))=\iota_*(H_1(\kappa,\R))$;
\item for every periodic regular domain $V$ containing $\kappa$, it holds that $\iota_*(H_1(\kappa,\R))\subset \iota_*(H_1(V,\R))$.
\end{itemize}
Let us introduce now a notion that will be useful later. Let $V$ be a regular domain of $S$. We will say that $\Sigma\subset V$ is a {\it compact approximation} of $V$ if $\Sigma$ is a non empty compact surface such that every connected component of $V\setminus\Sigma$ is a topological open annulus, meaning that it is homeomorphic to $\R/\Z\times\R$. Note that $\Sigma$ is a surface with boundary except in the case where $\Sigma=V=S$. If $A$ is a connected component of $V\setminus\Sigma$, its frontier in $V$ is a loop $\Gamma$, one of the connected components of $\partial \Sigma$. The set $A$ is a punctured neighborhood of an end of $V$ that will be denoted $e(\Gamma)$. In the case where the connected component of $S\setminus\Sigma$ that contains $A$ is an open disk $D$, we will say that $\Gamma$ {\it bounds a disk outside} $\Sigma$. In that case, $D\setminus A$ is a connected component of $S\setminus V$ and this component is {\it cellular}, meaning the intersection of a decreasing sequence of topological closed disks. We will say that the end $e(\Gamma)$ is cellular. Note that every cellular connected component of $S\setminus V$ is obtained in that way.  
 Note also that $\chi(V)=\chi(\Sigma)$ and that two compact approximations $\Sigma$, $\Sigma'$ of a regular domain are isotopic: there exists a continuous family of homeomorphisms $(h_t)_{t\in[0,1]}$ supported in $V$ such that $h_0=\mathrm{Id}$ and such that $h_1(\Sigma)=\Sigma'$.

Let us explain why $\mathrm{gen}(\kappa)$ is equal to half the rank of the form $\wedge_{\vert {\iota_*(H_1(\kappa,\R))}}$. It will implies that the following inclusion holds: $$\iota_*(H_1(V,\R))= \iota_*(H_1(\kappa,\R)) \Rightarrow \mathrm{gen}(V)=\mathrm{gen}(\kappa),$$ if $V$ is a regular periodic open set containing $\kappa$. Indeed, if $V$ is a regular open set and if $\Sigma\subset V$ is a compact approximation of $V$,  the subspace $H$ of $H_1(S,\R)$ generated by the classes $[\Gamma]$, where $\Gamma$ is a boundary curve of a $\Sigma$, is included in $\iota_*(H_1(V,\R))$. In fact it coincides with the kernel of  $\wedge_{\vert {\iota_*(H_1(V,\R))}}$. Indeed, it is obviously included in the kernel. Moreover $\iota_*(H_1(V,\R))/H$ is isomorphic to $H_1(\check V,\R)$ and the intersection form is non degenerated on $H_1(\check V,\R)$ because $\check V$ is a closed surface. As we now that $\mathrm{dim}(H_1(\check V,\R))=2\mathrm{gen}(\check V)= 2\mathrm{gen}(V)$, we deduce that the rank of  $\wedge_{\vert {\iota_*(H_1(V,\R))}}$ is twice the genus of $V$.

Let us conclude this section with a notion that is not explicitly defined in \cite{LecSa} but will be useful, mainly to lighten the redaction. Corollary \ref{c.connectedcomponent} is used several times in what follows. If $\kappa$ is a class of period $q$,  define a {\it regular neighborhood} of $\kappa$ to be a regular domain $V$ invariant by $f^q$ that contains $\kappa$ and such that the $f^k(V)$, $0\leq k<q$, are pairwise disjoint.

\begin{proposition}\label{p.minimal}
Consider $f\in \cG^r_{\omega}(S)$ and $\kappa\in{\mathcal E}(f)$. For every regular periodic domain  $V$ containing $\kappa$, there exists a regular neighborhood $V''$ of $\kappa$ such that $V''\subset V$. Moreover if $V$, $V'$ are two regular neighborhoods of $\kappa$, then $\kappa$ is included in a connected component $V''$ of $V\cap V'$ and $V''$ is a regular neighborhood of $\kappa$.\end{proposition}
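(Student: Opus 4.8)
The plan is to prove the two assertions separately, the second being an immediate consequence of Corollary \ref{c.connectedcomponent} and the first requiring slightly more care. Throughout I write $q$ for the period of $\kappa$, and I use repeatedly that the images of a periodic regular domain under a power of $f$ are again periodic regular domains, that $f^q(\kappa)=\kappa$, and that the classes $f^k(\kappa)$, $0\leq k<q$, are pairwise distinct. The two defining features of a regular neighborhood are $f^q$-invariance and pairwise disjointness of its $q$ iterates; the whole point is to reconcile them.

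I would dispose of the second assertion first. Let $V$, $V'$ be regular neighborhoods of $\kappa$. Both are periodic regular domains containing $\kappa$, so by Corollary \ref{c.connectedcomponent} the class $\kappa$ lies in a connected component $V''$ of $V\cap V'$ which is itself a periodic regular domain. Since $V\cap V'$ is $f^q$-invariant and $f^q(\kappa)=\kappa$, uniqueness of the component of $V\cap V'$ containing $\kappa$ forces $f^q(V'')=V''$. Finally $V''\subset V$, hence $f^k(V'')\subset f^k(V)$ for $0\leq k<q$, and as the $f^k(V)$ are pairwise disjoint so are the $f^k(V'')$; thus $V''$ is a regular neighborhood of $\kappa$.

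For the first assertion, fix a periodic regular domain $V\supset\kappa$. First I would make the iterates disjoint, temporarily sacrificing invariance. The distinct classes $f^k(\kappa)$, $0\leq k<q$, satisfy $f^k(\kappa)\subset f^k(V)$, so Proposition \ref{p.separationconsequences} furnishes pairwise disjoint periodic regular domains $V'_k$ with $f^k(\kappa)\subset V'_k\subset f^k(V)$. Letting $\widehat V$ be the connected component containing $\kappa$ of $\bigcap_{k=0}^{q-1} f^{-k}(V'_k)$, and noting that each factor $f^{-k}(V'_k)$ contains $\kappa$, iterated application of Corollary \ref{c.connectedcomponent} shows $\widehat V$ is a periodic regular domain. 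Moreover $\widehat V\subset f^{-0}(V'_0)=V'_0\subset V$, while $f^k(\widehat V)\subset V'_k$ for each $k$, so the iterates $f^k(\widehat V)$, $0\leq k<q$, are pairwise disjoint.

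It remains to restore $f^q$-invariance without destroying disjointness, and here the key observation, which resolves the whole tension, is that disjointness of iterates is inherited by subsets: any $V''\subset\widehat V$ automatically has pairwise disjoint iterates $f^k(V'')\subset f^k(\widehat V)$. So I would re-symmetrize $\widehat V$. If $P$ denotes its period and $N=P/\gcd(P,q)$, then the finite intersection $X=\bigcap_{k=0}^{N-1} f^{kq}(\widehat V)$ satisfies $f^q(X)=X$, and each factor contains $f^{kq}(\kappa)=\kappa$. Let $V''$ be the component of $X$ containing $\kappa$; by Corollary \ref{c.connectedcomponent} it is a periodic regular domain, and by uniqueness of the component through $\kappa$ it is $f^q$-invariant. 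Since $V''\subset\widehat V\subset V$, it lies in $V$ and inherits disjoint iterates, so $V''$ is the desired regular neighborhood. The only genuine obstacle is this conflict between invariance and disjointness; it is overcome precisely because disjointness is monotone under passing to subsets, which allows one to separate first and re-symmetrize afterward.
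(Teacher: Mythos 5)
Your proof is correct and follows essentially the same route as the paper's: separate the iterates via Proposition \ref{p.separationconsequences}, pass to the component containing $\kappa$ of $\bigcap_{0\leq k<q}f^{-k}(V'_k)$ using Corollary \ref{c.connectedcomponent}, then restore $f^q$-invariance by intersecting the $f^{kq}$-images of that domain, with disjointness surviving because it passes to subsets. The only cosmetic difference is that you index the final intersection by $N=P/\gcd(P,q)$ while the paper uses the full period $q'$ of the intermediate domain (the two intersections coincide), and you treat the second assertion first.
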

\begin{proof}
By Proposition \ref{p.separationconsequences}, one can find a sequence of disjoint periodic re-gular domains  $(V_k)_{0\leq k<q}$ such that $f^k(\kappa)\subset V_k\subset f^k(V)$. By Corollary \ref{c.connectedcomponent}, $\kappa$ is included in a connected component $V'$ of $\bigcap_{0\leq k<q}f^{-k}(V_k)$ and $V'$ is a periodic regular domain included in $V$. Moreover it holds that $f^k(V')\cap V'=\emptyset$ if $0<k<q$ because we have $f^k(V')\cap V'\subset V_k\cap V_0=\emptyset$. Denote $q'$ the period of $V'$. Every $f^{kq}(V')$ is a regular domain that contains $\kappa$, and $W'= \bigcap_{0\leq k<q'} f^{kq}(V')$ satisfies $f^{q}(W')=W'$. By Corollary \ref{c.connectedcomponent}, $\kappa$ is included in a connected component $V''$ of $W'$ that contains $\kappa$ and $V''$ is a periodic regular domain. Note that $f^q(V'')=V''$ because $f^q(W')=W'$ and $f^q(\kappa)=\kappa$.  Moreover we have $f^k(V'')\cap V''=\emptyset$ if $0<k<q$, because $V''\subset V'$. Consequently, $V''$ is a regular neighborhood of $\kappa$. It is included in $V$ because we have $V''\subset W'\subset V'\subset V$. 

Using again Corollary \ref{c.connectedcomponent}, we know that if $V$ and $V'$ are regular neighborhoods of $\kappa$, then $\kappa$ is included in a connected component $V''$ of $V\cap V'$ and $V''$ is a periodic regular domain. We have $f^q(V'')=V''$ because $f^q(V\cap V')=V\cap V'$ and $f^q(\kappa)=\kappa$. We know that $f^k(V'')\cap V''=\emptyset$ if $0<k<q$, because  $f^k(V)\cap V=f^k(V')\cap V'=\emptyset$. So, $V''$ is a regular neighborhood of $\kappa$.
\end{proof}

One deduces from Proposition \ref{p.minimal} that for every $f\in \cG^r_{\omega}(S)$ and every $\kappa\in{\mathcal E}(f)$, there exists a regular neighborhood $V$ of $\kappa$ such that
$$ \iota_*(H_1(V,\R))=\iota_*(H_1(\kappa,\R)), \enskip \mathrm{gen}(V)=\mathrm{gen}(\kappa).$$
Such a domain $V$ will be called a {\it minimal regular neighborhood} of $\kappa$.

\subsection{Lefschetz formula for regular domains}\label{ss.moreaboutregulardomains}

We will prove in this section the following result.

\begin{lemma} \label{l: lefschetzformula} Consider $f\in \cG^r_{\omega}(S)$ isotopic to the identity and an identity isotopy $I$ of $f$.  Suppose that $V$ is a regular domain with no cellular end that is invariant by $f$ and such that every connected component of $\tilde \pi^{-1}(V)$ is invariant by $\tilde f_I$. Then we have
 $$\sum_{z\in \mathrm{fix}_{I}(f)\cap V} i(f,z)=\chi(V).$$
 \end{lemma}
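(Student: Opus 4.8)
The plan is to move the computation to the end compactification $\check V$, which is a closed surface, to apply there the refined Lefschetz formula recalled in Section~\ref{ss.lefschtez}, and then to subtract the contribution of the added ends, which is completely controlled by Corollary~\ref{c.e-genericconsequences}.

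First I would extend $f_{\vert V}$ to the homeomorphism $\check f$ of $\check V$ permuting the ends, and reduce the statement to two assertions: (a) $\check f$ is isotopic to the identity on $\check V$ and admits an identity isotopy $\check I$ with $\mathrm{fix}_{\check I}(\check f)\cap V=\mathrm{fix}_I(f)\cap V$; and (b) every end of $V$ is a fixed point of $\check f$ lying in $\mathrm{fix}_{\check I}(\check f)$. The hypothesis that each connected component $\tilde V$ of $\tilde\pi^{-1}(V)$ is invariant under $\tilde f_I$ is what makes (a) possible: $\tilde f_I{}_{\vert\tilde V}$ is then a lift of $f_{\vert V}$ to the covering $\tilde V\to V$ associated with $\ker(\pi_1(V)\to\pi_1(S))$, so a point $z\in V$ lies in $\mathrm{fix}_I(f)$ exactly when some lift $\tilde z\in\tilde V$ satisfies $\tilde f_I(\tilde z)=\tilde z$. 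Passing to the end compactification of $\tilde V$ and taking the quotient by the subgroup of $G$ preserving $\tilde V$ (which commutes with $\tilde f_I$) should produce both the isotopy $\check I$ and the identification of the contractible fixed points inside $V$.

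For (b) I would argue that all ends are fixed. Since $f$ is isotopic to the identity it acts trivially on $H_*(S,\R)$, and the no-cellular-end hypothesis guarantees that every end corresponds to a homologically essential (non disk-like) complementary piece, so $\check f$ cannot permute the ends; equivalently, the lifted isotopy $\tilde f_I{}_{\vert\tilde V}$ has uniformly bounded displacement and therefore fixes each end of $\tilde V$, which descends to the assertion that $\check f$ fixes each end of $V$ with contractible trajectory loop. Granting this, Corollary~\ref{c.e-genericconsequences} (with period $q=1$ and $n=1$) gives $i(\check f,\zeta)=1$ at every end $\zeta$.

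Finally I would assemble the count. Writing $e$ for the number of ends, a compact approximation $\Sigma$ of $V$ gives $\chi(V)=\chi(\Sigma)$ and $\chi(\check V)=\chi(\Sigma)+e=\chi(V)+e$, since $\check V$ is obtained by capping the $e$ boundary circles of $\Sigma$ with disks. Applying the refined Lefschetz formula on $\check V$, using $i(\check f,z)=i(f,z)$ for $z\in V$ (where $\check f$ agrees locally with $f$) and $i(\check f,\zeta)=1$ at each of the $e$ ends, I obtain
$$\chi(\check V)=\sum_{z\in\mathrm{fix}_I(f)\cap V}i(f,z)+e,$$
whence $\sum_{z\in\mathrm{fix}_I(f)\cap V}i(f,z)=\chi(\check V)-e=\chi(V)$. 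The main obstacle is step (b) together with the contractibility matching in (a): one must genuinely control the action of the lifted isotopy on the ends of $\tilde V$, and it is precisely here that both the invariance of the lifts and the absence of cellular ends are indispensable — without the latter, $\check f$ could cyclically permute several disk-like ends, breaking the equality $\sum_{\zeta}i(\check f,\zeta)=e$ on which the whole computation rests.
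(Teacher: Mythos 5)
Your skeleton is the same as the paper's: compactify the ends, apply the refined Lefschetz formula on $\check V$, get index $1$ at each end from Corollary \ref{c.e-genericconsequences}, and finish with $\chi(\check V)=\chi(V)+\#\,\mathrm{end}(V)$. That bookkeeping is correct. The genuine gap sits exactly at your steps (a) and (b), and it is closed in the paper by an idea absent from your proposal: because $V$ has no cellular end, every connected component $\tilde V$ of $\tilde\pi^{-1}(V)$ is \emph{simply connected} (its complement in the Alexandroff compactification of $\tilde S$ is connected), so $\tilde\pi_{\vert\tilde V}\colon \tilde V\to V$ is the \emph{universal} covering and $V=\tilde V/H$, with $H\subset G$ the stabilizer of $\tilde V$. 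Since $\tilde f_{I\vert\tilde V}$ commutes with every element of $H$, the restriction $f_{\vert V}$ is isotopic to the identity, and one can choose an identity isotopy $I'$ of $f_{\vert V}$ that lifts to an identity isotopy of $\tilde f_{I\vert\tilde V}$. This is the concrete construction behind what you describe as ``should produce both the isotopy $\check I$ and the identification of the contractible fixed points''; note that the injectivity of $\pi_1(V)\to\pi_1(S)$ (i.e.\ the simple connectivity of $\tilde V$) is also what gives $\mathrm{fix}_I(f)\cap V=\mathrm{fix}_{I'}(f_{\vert V})$, since a loop in $V$ is then contractible in $S$ if and only if it is contractible in $V$.

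Once $I'$ exists, your step (b) is automatic and needs none of the arguments you give: extending $I'$ to $\check V$ yields an isotopy from the identity to $\check f$ each of whose times maps the finite set of ends into itself, so by continuity every end is fixed at every time, with constant --- hence contractible --- trajectory, and Corollary \ref{c.e-genericconsequences} applies. The arguments you offer instead do not hold up. First, a non-cellular end need not face a homologically essential complementary piece: a complementary continuum that is a genus-one subsurface with a single (separating, hence null-homologous) boundary curve is not disk-like, yet is invisible in $H_1(S,\R)$, so triviality of the action on homology cannot by itself rule out a permutation of two such ends; what actually forbids it is the $\pi_1$-level argument above. Second, the ``uniformly bounded displacement'' of $\tilde f_{I\vert\tilde V}$ is asserted but never related to the ends of $V$, so it proves nothing as stated. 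Relatedly, you misidentify the role of the no-cellular-end hypothesis: its function is not (only) to prevent a permutation of disk-like ends, but to guarantee the simple connectivity of $\tilde V$, on which both the construction of the isotopy in (a) and the matching of contractible fixed points rest; without that, your step (a) has no proof, and the whole computation on $\check V$ is unsupported.
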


\begin{proof}  The fact that $V$ has no cellular end implies that every connected component $\tilde V$ of $\tilde\pi^{-1}(V)$ is simply connected. Indeed, the complement of $\tilde V$ in the Alexandroff compactification of $\tilde S$ is connected. So, the map $\tilde \pi_{\vert \tilde V}: \tilde V\to V$ is the universal covering projection and one can write $V=\tilde V/H$, where $H\subset G$ is the stabilizer of $\tilde V$. The fact that $\tilde f_{I\vert \tilde V}$ commutes with every element of $H$ implies that $f_{\vert V}$ is isotopic to the identity.  Moreover, one can find an identity isotopy $I'$ of $f_{\vert V}$ that can be lifted to an identity isotopy of $\tilde f_{I\vert \tilde V}$ (this last remark is relevant only in the case where $V$ is an annulus). The isotopy $I'$ can be extended to an identity isotopy $\check I'$ of $\check f$ on $\check V$, that fixes the ends (remind that the end compactification of $V$ is denoted by $\check V$ and the natural extension of $f_{\vert V}$ to $\check V$ is denoted by $\check f$).   Consequently it holds that 
$$\mathrm{fix}_{\check I'}(\check f)=\left(\mathrm{fix}_{I}(f)\cap V\right) \cup \mathrm{end}(V),$$
where $\mathrm{end}(V)$ is the set of ends of $V$.  As stated in Corollary \ref{c.e-genericconsequences}, one knows that $ i(\check f,z)=1$ if $z$ is an end. Applying the improved version of Lefschetz formula,
one gets:
$$ \chi(V)=\chi (\check V)-\# \mathrm{end}(V) 
=\sum_{z\in \mathrm{fix}_{\check I'}(\check f)} i(\check f,z)-\# \mathrm{end}(V)
 =\sum_{z\in \mathrm{fix}_{I}(f)\cap V} i(f,z).$$
 
\end{proof}

\section{Proof of Propositions \ref{p.keyresult} and \ref{p.precisekeyresult}}\label{s.keyresult}

Let us begin by some lemmas:

\begin{lemma}  \label{l:nonzerohomology} Consider $f\in \cG^r_{\omega}(S)$ isotopic to the identity and an identity isotopy $I$ of $f$. Suppose that $\mathrm{fix}_I(f)\not=\emptyset$.\footnote{As explained before  this hypothesis is always satisfied if $g\geq 2$} If $(V_j)_{j\in J}$ is a finite family of disjoint regular domains invariant by $f$, such that $\mathrm{fix}_{I,\mathrm{h^+}}(f)\subset\bigcup_{j\in J} V_j$, then there exists $j\in J$ such that $\iota_*(H_1(V_j,\R))\not=\{0\}$. 
\end{lemma}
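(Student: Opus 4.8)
The plan is to argue by contradiction. Assume that $\iota_*(H_1(V_j,\R))=\{0\}$ for every $j\in J$; I will then manufacture, in the complement of $\bigcup_j V_j$, an $f$-invariant regular domain of positive genus containing no point of $\mathrm{fix}_{I,\mathrm{h^+}}(f)$, and show that this violates the Lefschetz formula of Lemma \ref{l: lefschetzformula}. Two reductions come first. By the index values recalled in Section \ref{ss.lefschtez} together with $\sum_{z\in\mathrm{fix}_I(f)}i(f,z)=\chi(S)=2-2g$, the set $\mathrm{fix}_{I,\mathrm{h^+}}(f)$ is nonempty: if $g\geq 2$ the sum is negative and only the $\mathrm{h^+}$ points have negative index, while if $g=1$ a nonempty $\mathrm{fix}_I(f)$ carrying only index $+1$ would force a positive sum. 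Hence $J\neq\emptyset$, and discarding those $V_j$ that meet no point of $\mathrm{fix}_{I,\mathrm{h^+}}(f)$ I may assume each $V_j$ contains such a point $z_j$. Since $z_j\in\mathrm{fix}_I(f)$ it lifts to a fixed point of $\tilde f_I$; as $\tilde f_I$ permutes the components of $\tilde\pi^{-1}(V_j)$, commutes with $G$, and one component is met by a fixed point, every component of $\tilde\pi^{-1}(V_j)$ is $\tilde f_I$-invariant. Thus Lemma \ref{l: lefschetzformula} applies to each $V_j$ (after filling its cellular ends, which alters neither $\iota_*(H_1(\cdot,\R))$ nor the invariance of the lifts): $f_{|V_j}$ is isotopic to the identity and the extension $\check f$ fixes every end of $V_j$.

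Next I would exploit the hypothesis. By the description of $\mathrm{gen}$ as half the rank of $\wedge_{|\iota_*(H_1(V_j,\R))}$ (Section \ref{ss:Minimal regular neighborhoods}), $\iota_*(H_1(V_j,\R))=\{0\}$ forces $\mathrm{gen}(V_j)=0$ and, crucially, forces every boundary curve $\Gamma$ of a compact approximation of $V_j$ to satisfy $[\Gamma]=0$ in $H_1(S,\R)$, hence in $H_1(S,\Z)$ since the latter is torsion free. On a closed orientable surface a simple closed curve is separating if and only if it is null-homologous, so all boundary curves of the $V_j$ are separating. Cutting $S$ along this $f$-invariant family of separating curves, genus is additive and each $V_j$ contributes $0$; therefore the total genus $g\geq 1$ is carried by the complementary pieces, and I may fix a complementary component $V_0$ with $\mathrm{gen}(V_0)\geq 1$.

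The crucial step — and the one I expect to be the main obstacle — is to verify that Lemma \ref{l: lefschetzformula} also applies to $V_0$, i.e.\ that $V_0$ is $f$-invariant and that every component of $\tilde\pi^{-1}(V_0)$ is $\tilde f_I$-invariant; here $V_0$ need carry no contractible fixed point, so the earlier argument is unavailable. The natural tool is the dual tree $\tilde{\mathcal T}$ of the decomposition of $\tilde S$ induced by the lifts of the separating curves $\partial V_j$: it is a genuine tree (all curves being separating), $\tilde f_I$ acts on it simplicially commuting with $G$, and it fixes every vertex corresponding to a lift of some $V_j$. Since the fixed-vertex set of a tree automorphism is a subtree, $\tilde f_I$ fixes the convex hull of these vertices, and a complementary vertex is fixed as soon as it lies on a geodesic between two lifts of the $V_j$'s. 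The delicate point is that a complementary region sitting in a ``leaf'' of the tree, with no lift of any $V_{j'}$ beyond it, need not be fixed circle-by-circle, because an end of $V_j$ may have several boundary circles in its (simply connected) lift which $\tilde f_I$ can permute; handling such genus-carrying leaves — ruling them out, or absorbing them, using that their connecting curve is $f$-fixed and bounds all the genus on one side — is where the real work lies, and lets me take $V_0$ to be $f$-invariant with $\tilde f_I$-invariant lifts.

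Granting this, I conclude as follows. Filling the cellular ends of $V_0$ and applying Lemma \ref{l: lefschetzformula} gives
\[
\sum_{z\in\mathrm{fix}_I(f)\cap V_0} i(f,z)=\chi(V_0)=\chi(\check V_0)-\#\mathrm{end}(V_0)=2-2\,\mathrm{gen}(V_0)-\#\mathrm{end}(V_0)\leq -\#\mathrm{end}(V_0)<0,
\]
the last inequality because $\mathrm{gen}(V_0)\geq 1$ and $V_0$, being a proper subsurface, has at least one end. On the other hand every point of $\mathrm{fix}_{I,\mathrm{h^+}}(f)$ lies in $\bigcup_j V_j$, which is disjoint from $V_0$; hence $V_0$ contains no index $-1$ point of $\mathrm{fix}_I(f)$ and the left-hand side is $\geq 0$. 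This contradiction shows that $\iota_*(H_1(V_j,\R))\neq\{0\}$ for some $j\in J$.
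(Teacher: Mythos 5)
There is a genuine gap, and it sits exactly where you flagged it: the invariance needed to apply Lemma \ref{l: lefschetzformula} to the genus-carrying complementary piece $V_0$ is never established, and your sketch for it does not get off the ground. The curves you cut along are boundary curves of compact approximations $\Sigma_j$; a compact approximation is an auxiliary, non-canonical choice, so this curve system is \emph{not} $f$-invariant (only the open domains $V_j$ are), contrary to your phrase ``this $f$-invariant family of separating curves.'' Consequently $\tilde f_I$ does not preserve the lifted curve system, does not act simplicially on your dual tree, and does not induce any action on the complementary pieces: $V_0$ itself is not an $f$-invariant set, so the hypotheses of Lemma \ref{l: lefschetzformula} (invariance of $V_0$ and of every component of $\tilde\pi^{-1}(V_0)$ under $\tilde f_I$) are simply not available. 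Repairing this would require replacing the $\Sigma_j$-decomposition by a canonical one and re-proving invariance of its pieces and of their lifts, which is precisely the ``real work'' you defer with ``Granting this.'' There is also a second, unaddressed wrinkle: when you fill the cellular ends of $V_0$ to meet the hypotheses of Lemma \ref{l: lefschetzformula}, the filled disks may swallow some of the $\overline{V_j}$ (e.g.\ when $V_j$ is a disk adjacent to $V_0$), and then $V_0$ does contain index $-1$ points, so your final inequality $\sum_{z\in\mathrm{fix}_I(f)\cap V_0}i(f,z)\geq 0$ no longer follows.

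The paper's proof shows that none of this dynamics on the complement is needed. It applies the Lefschetz formula only to the $V_j$ (where invariance of the lifts comes for free from the contractible fixed points they contain), and then runs a purely topological count: since every index $-1$ point of $\mathrm{fix}_I(f)$ lies in some $V_j$,
$$\sum_{j'}\chi(\Sigma'_{j'})=\chi(S)-\sum_j\chi(V_j)=\sum_{z\in\mathrm{fix}_I(f)}i(f,z)-\sum_j\sum_{z\in\mathrm{fix}_I(f)\cap V_j}i(f,z)\geq 0,$$
while each complementary piece satisfies $\chi(\Sigma'_{j'})\leq 0$ (it is neither a sphere nor, after filling cellular ends, a disk). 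Hence every $\Sigma'_{j'}$ is an annulus, with no invariance statement required. Your positive-genus piece $V_0$ is thereby ruled out outright, and the contradiction is then exactly the one you set up in your second paragraph: if moreover every $V_j$ had genus $0$ and only null-homologous boundary curves, genus additivity along separating curves would force $g=0$. In short, your reductions and your use of separating curves and genus additivity agree with the paper, but routing the Lefschetz argument through the complementary region instead of through the $V_j$ is the wrong direction: it demands invariance properties that fail for the non-canonical pieces, whereas the paper's Euler-characteristic bookkeeping makes the complementary pieces' topology a \emph{conclusion} rather than an input.
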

  
\begin{proof} Fix $j_0\in J$. The set of cellulars ends of $V_{j_0}$ is finite and invariant by $f_{\vert V_{j_0}}$. If one adds to $V_{j_0}$ the union of the cellular components of $S\setminus V_{j_0}$, one gets another  regular domain $W_{j_0}$ such that $\iota_*(H_1(W_{j_0},\R))=\iota_*(H_1(V_{j_0},\R))$ and $W_{j_0}$ is invariant by $f$. The set $W_{j_0}$ possibly contains other components $V_j$, $j\not=j_0$. As a consequence, reducing the set $J$ if necessary, there is no loss of generality by adding the hypothesis that no domain $V_j$, $j\in J$, has a cellular end. Of course, one can also suppose that for every $j\in J$ it holds that $\mathrm{fix}_{I,\mathrm{h^+}}(f)\cap V_j\not=\emptyset$, which implies that every connected component of $\tilde\pi^{-1}(V_j)$ is invariant by  $\tilde f_I$. Consequently, one can apply Lemma  \ref{l: lefschetzformula} to $V_j$.
For every $j\in J$, fix a compact approximation $\Sigma_j$ of $V_j$. By hypothesis, if $\Gamma$ is a component of $\partial\Sigma_j$, it does not bound a disk outside $\Sigma_j$.  One has
 $$\overline{S\setminus\bigcup_{j\in J} \Sigma _j}=\bigcup_{j'\in J'}\Sigma'_{j'},$$
 where $(\Sigma'_{j'})_{j'\in J'}$ is a family of disjoint compact subsurfaces with boundaries.
 We have
 $$\begin{aligned}\sum_{j'\in J'} \chi (\Sigma'_{j'})&= \chi(S)- \sum_{j\in J} \chi (\Sigma_j)\\
 &= \chi(S)- \sum_{j\in J} \chi (V_j)\\
 &=\sum_{z\in \mathrm{fix}_I(f)} i(f,z)-  \sum_{j\in J} \sum_{z\in \mathrm{fix}_I(f)\cap V_j} i(f,z).\end{aligned}$$
We deduce that
 $$\sum_{j'\in J'} \chi (\Sigma'_{j'})\geq 0,$$
 because every fixed point $z\in \mathrm{fix}_I(f)$  of Lefschetz index $-1$ is contained in a $V_j$. 

Note now that $\chi(\Sigma'_{j'})\leq 0$. Indeed, $\chi(\Sigma'_{j'})$ is positive if and only $\Sigma'_{j'}$ is a sphere or a disk. The first case is impossible because $S$ is not the $2$-sphere but the second case never occurs because no component of the boundary of a $\Sigma_j$ bounds a disk outside $\Sigma_j$. One deduces that $\chi(\Sigma'_{j'})=0$ for every $j'\in J'$, meaning that every $\Sigma'_{j'}$ is an annulus.

To get the lemma, it is sufficient to prove that at least one of the following situations occurs:

\begin{itemize}
\item there exists $j\in J$ such that $\mathrm{gen}(V_j)\not=0$,
\item there exists $j\in J$ and a component $\Gamma$ of the boundary of $\Sigma_j$ such that $[\Gamma]\not=\{0\}$.

\end{itemize}

If none of the situations occurs, then every $\Sigma_j$ is a surface of genus zero  with separating boundary components and every $\Sigma'_{j'}$ is an annulus. It implies that $S$ is the $2$-sphere. We have found a contradiction.  
\end{proof}

We can clarify the previous result.

\begin{lemma}  \label{l:completehomology} Consider $f\in \cG^r_{\omega}(S)$ isotopic to the identity and an identity isotopy $I$ of $f$. Suppose that $\mathrm{fix}_I(f)\not=\emptyset$. Let $(V_j)_{j\in J}$ be a finite family of disjoint regular domains invariant by $f$, such that 
\begin{itemize}
\item no domain $V_j$, $j\in J$, has a cellular end;
\item for every $j\in J$ it holds that $\mathrm{fix}_{I,\mathrm{h^+}}(f)\cap V_j\not=\emptyset$; 
\end{itemize}
and fix for every $j\in J$, a compact approximation $\Sigma_j$ of $V_j$. If $H$ is the subspace of $H_1(S,\R)$ generated by the classes $[\Gamma]$, where $\Gamma$ is a boundary curve of a $\Sigma_j$, and $H'$ is the subspace of $H_1(S,\R)$ generated  by the $i_*(H_1(V_j,\R))$, $j\in J$, then the equalities
$$\mathrm{dim}( H)+\mathrm{dim} (H')=2g, \enskip \sum_{j\in J} \mathrm{gen}(V_i) +\mathrm{dim} (H)=g $$ hold.
\end{lemma}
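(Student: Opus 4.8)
The plan is to strip the statement down to the topology of the decomposition of $S$ into the pieces $\Sigma_j$ and their complement, and then to read the two equalities off from Poincar\'e--Lefschetz duality together with an Euler characteristic count. Set $\Sigma=\bigcup_{j\in J}\Sigma_j$ and $\Sigma'=\overline{S\setminus\Sigma}$. The decisive preliminary step, which I would establish exactly as in the proof of Lemma \ref{l:nonzerohomology} (from $\chi(\Sigma'_{j'})\le 0$ for each complementary piece together with $\sum_{j'}\chi(\Sigma'_{j'})\ge 0$, the latter expressing that every fixed point of index $-1$ lies in some $V_j$), is that every connected component $\Sigma'_{j'}$ of $\Sigma'$ is an annulus. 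Granting this, write $c_1,\dots,c_A$ for the cores of these $A$ annuli: they are pairwise disjoint simple closed curves, and cutting $S$ along them recovers $\bigsqcup_{j}\Sigma_j$. Since a compact approximation carries the full homology of its regular domain, $H'=\iota_*(H_1(\Sigma,\R))$. Moreover each boundary curve $\Gamma$ is homologous to the core of the annulus it bounds, so the boundary classes and the annulus cores span the same subspace; hence $H=\langle[c_1],\dots,[c_A]\rangle=\iota_*(H_1(\Sigma',\R))$.

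For the first equality I would invoke Poincar\'e--Lefschetz duality for the complementary compact subsurfaces $\Sigma,\Sigma'$ of the closed oriented surface $S$: their images $\iota_*(H_1(\Sigma,\R))$ and $\iota_*(H_1(\Sigma',\R))$ are orthogonal complements for the intersection form $\wedge$. The inclusion $\iota_*(H_1(\Sigma',\R))\subseteq(\iota_*(H_1(\Sigma,\R)))^{\perp}$ is immediate, since a class carried by $\Sigma'$ and a class carried by $\Sigma$ admit disjoint representatives and hence have intersection number $0$; duality upgrades this to an equality of subspaces. Writing $W=\iota_*(H_1(\Sigma,\R))$ we then get $\dim H'+\dim H=\dim W+\dim W^{\perp}=2g$.

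For the second equality I would combine an Euler characteristic computation with a combinatorial rank count. Because annuli and circles have zero Euler characteristic, $\chi(S)=\sum_{j}\chi(\Sigma_j)=\sum_j(2-2\,\mathrm{gen}(V_j)-b_j)$, where $b_j$ is the number of boundary components of $\Sigma_j$; using $\sum_j b_j=2A$ and $\#J=k$, this rearranges to $g=\sum_j\mathrm{gen}(V_j)+(A-k+1)$. On the other hand, encoding the gluing as a graph with one vertex per $\Sigma_j$ and one edge per annulus, the classes $[c_1],\dots,[c_A]$ satisfy exactly the $k$ boundary relations, one per subsurface, asserting that the signed sum of the cores glued to $\Sigma_j$ vanishes; these have rank $k-1$ because $S$ is connected, so $\dim H=A-(k-1)=A-k+1$. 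Combining the two facts gives $\dim H=g-\sum_j\mathrm{gen}(V_j)$, which is the second equality (and is consistent with the first via $\dim H'=g+\sum_j\mathrm{gen}(V_j)$).

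The main obstacle is the preliminary reduction to annuli: without it the statement is false (take $\Sigma_1$ a one-holed torus inside a genus-two surface, where $\dim H=0$, $\dim H'=2$), and it is precisely here that the dynamics enter, through the index computation of Lemma \ref{l: lefschetzformula} forcing every index $-1$ fixed point into $\bigcup_j V_j$ and hence every complementary piece to have zero Euler characteristic. A secondary technical point is the claim that the only linear relations among the core classes $[c_{j'}]$ are the subsurface boundary relations; this is the standard description of $H_1$ of a surface cut along a system of disjoint curves, which I would make self-contained, if required, by the Mayer--Vietoris sequence for $S=\Sigma\cup\Sigma'$ (which also re-proves the first equality and the dimension $\dim H'=g+\sum_j\mathrm{gen}(V_j)$ directly, after identifying the image of the middle arrow with $H'$ using that the annulus cores already lie in $H'$).
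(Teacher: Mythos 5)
Your proof is correct, and it coincides with the paper's argument in one half while genuinely diverging in the other. Both proofs start from the same reduction --- every complementary piece $\Sigma'_{j'}$ is an annulus, by the index count of Lemma \ref{l:nonzerohomology} (via Lemma \ref{l: lefschetzformula}) --- and both obtain the second equality from the Euler-characteristic identity $g-\sum_{j}\mathrm{gen}(V_j)=1+\tfrac{1}{2}\#\mathcal{C}-\#J$ combined with a rank formula for the span of a family of disjoint simple loops. For that rank formula ($\dim H=A-\#J+1$ in your notation) the paper gives an elementary induction using only intersection numbers: a curve that is non-separating in its complementary component admits a dual loop disjoint from the remaining curves, while a separating one is a combination of boundary classes. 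You instead quote the standard cut-surface description, backed by Mayer--Vietoris (or the exact sequence of the pair $(S,\bigcup c_{j'})$); this is the same fact reached with heavier machinery. One small point in your graph formulation: an annulus whose two boundary circles lie on the same $\Sigma_j$ is a loop edge, and its core receives coefficient zero in that subsurface's boundary relation; this is harmless for the rank count ($k-1$ for a connected graph), but worth stating. The genuine divergence is the first equality: the paper deduces it \emph{from} the second, using $H\subseteq H'$ and $\dim H'=\dim H+2\sum_{j}\mathrm{gen}(V_j)$ (resting on the Section \ref{ss:Minimal regular neighborhoods} fact that the boundary classes span the kernel of $\wedge$ restricted to each $\iota_*(H_1(V_j,\R))$), whereas you prove it independently by Poincar\'e--Lefschetz duality, identifying $H=\iota_*(H_1(\Sigma',\R))$ and $H'=\iota_*(H_1(\Sigma,\R))$ as $\wedge$-orthogonal complements. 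Your route requires duality for subsurfaces of a closed oriented surface, but it buys a strictly stronger conclusion --- not just the dimension identity but $H'^{\perp}=H$ exactly, which re-proves that $H$ is the kernel of $\wedge_{\vert H'}$ and feeds directly into Corollary \ref{c:isotropic} --- and it makes the two equalities logically independent, whereas in the paper the first hangs on the second. Your one-holed-torus example also correctly isolates the annulus reduction as the only place where the dynamics enter.
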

  
\begin{proof} We denote by $\mathcal C$ the set of simple loops that are boundary curves of a $\Sigma_j$, $j\in J$. We denote by $h$ the dimension of $H$, by $h'$ the dimension of $H'$, by $g_j$ the genus of $V_j$ and by $s_j$ the number of ends of $V_j$ (or equivalently the number of boundary curves of $\Sigma_j$).
We keep the same notations as in Lemma   \ref{l:nonzerohomology}. We have seen that every surface $\Sigma'_{j'}$ is an annulus. We have
$$2-2g= \chi(S)=\sum_{j\in J} \chi(\Sigma_j)+\sum_{j'\in J'} \chi(\Sigma'_{j'})=\sum_{j\in J} 2-2g_j-s_j,$$
which implies that 
$$g-\sum_{j\in J} g_j=1+ \frac{1}{2}\#{\mathcal C}-\sharp J.$$
To get the second equality of the lemma, one needs to prove that 
$$\mathrm{dim}(H)=  1+ \frac{1}{2}\#{\mathcal C}-\sharp J.$$
One can construct a new surface $S'$ homeomorphic to $S$ by identifying each annulus $\Sigma'_{j'}$ to a simple loop. We are reduced to prove that if $(\Gamma_k)_{1\leq k\leq m}$ is a family of pairwise disjoint simple loops, the dimension of the subspace of $H_1(S,\R)$ generated by the $[\Gamma_k]$, $1\leq k\leq m$, is equal to $1+m- n_m$, where $n_m$ is the number of connected component  of $S\setminus\cup_{1\leq k\leq m} \Gamma_k$. We argue by induction. The property is true if $m=1$: a simple loop is homological to zero if and only if it separates $S$ in two connected components. We suppose that the property is true for $m$ and we want to prove it for $m+1$. We denote by $V$ the connected component of $S\setminus \cup_{1\leq k\leq m} \Gamma_k$ that contains $\Gamma_{m+1}$. If it does not separate $V$, we can find a simple loop $\Gamma'\subset V$ such that $[\Gamma']\wedge [\Gamma_{m+1}]=1$. We deduce that $[\Gamma_{m+1}]$ is not a linear combination of the $\Gamma_k$, $1\leq k\leq m$. If it separates $V'$, then the closure of a connected component of $V\setminus\Gamma_{m+1}$ is a surface whose boundary consists of $\Gamma_{m+1}$ and some others $\Gamma_k$, $1\leq k\leq m$, and so  $[\Gamma_{m+1}]$ is a linear combination of the $[\Gamma_k]$, $1\leq k\leq m$. In both cases, the formula holds.

To get the first equality of the lemma, just notice that $H$ is included in $H'$ and that $\mathrm{dim} (H')=\mathrm{dim} (H)+2\sum_{j\in J} g_j$.
\end{proof}

 We deduce from Lemma  \ref{l:nonzerohomology} the following:

\begin{corollary}  \label{c:isotropic} Suppose that the hypothesis of  Lemma \ref{l:completehomology} are satisfied. Then the space $H'$ has dimension at least $g$. Moreover it contains a Lagrangian subspace of $H_1(S,\R)$. 

\end{corollary}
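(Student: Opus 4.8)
The plan is to combine the two numerical identities furnished by Lemma \ref{l:completehomology} with the symplectic linear algebra of the intersection form $\wedge$ on $H_1(S,\R)$, regarded as a nondegenerate antisymmetric form on a $2g$-dimensional space. First I would record that $H$ is an \emph{isotropic} subspace: it is generated by the classes $[\Gamma]$ of the boundary curves of the compact approximations $\Sigma_j$, and these curves form a family of pairwise disjoint simple loops (they lie in the disjoint domains $V_j$, and inside each $V_j$ they are distinct components of $\partial\Sigma_j$), so $[\Gamma]\wedge[\Gamma']=0$ for any two of them. Since an isotropic subspace of a $2g$-dimensional symplectic space has dimension at most $g$, the first identity $\dim H+\dim H'=2g$ immediately gives $\dim H'=2g-\dim H\geq g$, which is the first assertion. (The second identity of Lemma \ref{l:completehomology} yields the same bound through $\dim H\leq g-\sum_{j}\mathrm{gen}(V_j)\leq g$.)

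For the Lagrangian, I would set $H_j=\iota_*(H_1(V_j,\R))$, so that $H'=\sum_{j\in J}H_j$, and write $H_j^{\partial}$ for the span of the boundary classes of $\Sigma_j$, so that $H=\sum_{j\in J}H_j^{\partial}$. The key step is to show that $H$ lies in the radical $R$ of the restricted form $\wedge_{\vert H'}$. This rests on two facts. For a single domain, the discussion in Section \ref{ss:Minimal regular neighborhoods} identifies $H_j^{\partial}$ with the kernel of $\wedge_{\vert H_j}$, so $w\wedge v=0$ whenever $w\in H_j^{\partial}$ and $v\in H_j$. For two different domains, disjointness of $V_j$ and $V_{j'}$ lets us represent classes of $H_j$ and $H_{j'}$ by disjoint cycles, whence they pair to zero. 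Writing an arbitrary $v\in H'$ as $\sum_{j'}v_{j'}$ with $v_{j'}\in H_{j'}$ and taking any generator $w\in H_j^{\partial}$, both facts combine to give $w\wedge v=0$; thus $H\subseteq R$.

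Finally I would conclude by a dimension count. The form $\wedge_{\vert H'}$ descends to a nondegenerate symplectic form on $H'/R$, so a maximal isotropic subspace of $H'$ is the preimage of a Lagrangian of $H'/R$ and has dimension $\tfrac12(\dim H'+\dim R)$. Using $\dim R\geq\dim H=2g-\dim H'$, this is at least $\tfrac12(\dim H'+2g-\dim H')=g$. Hence $H'$ contains an isotropic subspace of dimension $\geq g$, and any $g$-dimensional subspace of it is an isotropic subspace of dimension $g$ in $H_1(S,\R)$, that is, a Lagrangian contained in $H'$. I expect the only genuine subtlety to lie in the middle step: one must be careful that the sum $\sum_{j}H_j$ need not be direct, but since only the \emph{inclusion} $H\subseteq R$ (and not an equality) is needed, the orthogonality of distinct blocks together with the single-block kernel computation suffices, and the possible non-directness causes no trouble.
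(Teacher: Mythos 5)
Your proof is correct, and its first half coincides with the paper's: $H$ is isotropic because its generators are classes of pairwise disjoint simple loops, so $\dim H\leq g$, and the identity $\dim H+\dim H'=2g$ gives $\dim H'\geq g$. For the Lagrangian, however, you take a genuinely different route. The paper argues by explicit construction: it chooses boundary curves $\Gamma_k$, $1\leq k\leq h$, whose classes form a basis of $H$, and inside each $\Sigma_j$ a geometric symplectic family $(\Gamma'_{j,l},\Gamma''_{j,l})_{1\leq l\leq g_j}$; the classes of the $\Gamma_k$ together with those of the $\Gamma'_{j,l}$ span an isotropic subspace (all these loops are pairwise disjoint), whose dimension is exactly $h+\sum_{j\in J} g_j=g$ by the \emph{second} identity of Lemma \ref{l:completehomology}, hence a Lagrangian inside $H'$. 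You instead prove abstractly that $H$ lies in the radical $R$ of $\wedge_{\vert H'}$, using the identification of $H_j^{\partial}$ with $\ker\bigl(\wedge_{\vert H_j}\bigr)$ from Section \ref{ss:Minimal regular neighborhoods} for a single domain, and orthogonality of classes supported in disjoint domains for distinct blocks; then you invoke the standard fact that a maximal isotropic subspace of $H'$ has dimension $\tfrac12(\dim H'+\dim R)\geq\tfrac12(\dim H'+\dim H)=g$, which uses only the \emph{first} identity. Your remark on the possible non-directness of $\sum_{j\in J}H_j$ is also right: only the inclusion $H\subseteq R$ is needed, so nothing breaks. What each approach buys: the paper's construction is more concrete, exhibiting a Lagrangian spanned by classes of pairwise disjoint simple closed curves (a geometric refinement the abstract argument does not provide), whereas your argument dispenses with the second identity of the lemma, avoids choosing geometric symplectic bases, and isolates the linear-algebraic mechanism (boundary classes lie in the radical), making the logical dependence on the hypotheses more transparent.
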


\begin{proof} The space $H$ is obviously isotropic, meaning that the form $\wedge$ vanishes on it. It implies that $\mathrm{dim}(H)\leq g$. By Lemma \ref{l:completehomology}  we deduce that $\mathrm{dim}(H')\geq g$. Denoting $h=\mathrm{dim}(H)\leq g$, we can find a family of simple loops $\Gamma_k$, $1\leq k\leq h$, each of them being a boundary  curve of a $V_j$, such that the family $([\Gamma])_{1\leq k\leq h}$ is basis of $H$. Recall that the genus of $V_j$ is denoted by $g_j$. One can find two families of simple loops $(\Gamma'_{j, l})_{1\leq l\leq g_j}$ and $(\Gamma''_{j, l})_{1\leq l\leq g_j}$ in the interior of $\Sigma_j$ such that
\begin{itemize}
\item the $\Gamma'_{j,l}$, $ 1\leq l\leq g_j$, are pairwise disjoint;
\item the $\Gamma''_{j,l}$, $ 1\leq l\leq g_j$, are pairwise disjoint;
\item $\Gamma'_{j,l'}\cap \Gamma''_{j,l''}=\emptyset$, if $l'\not=l''$;
\item $\Gamma'_{j,l}\cap \Gamma''_{j,l}$ is reduced to point and we have $[\Gamma'_{j,l}]\wedge[\Gamma''_{j,l}]=1$.
\end{itemize}
Noting that $H\subset H'$, we deduce that the $\Gamma_k$, $1\leq k\leq h$, and the $\Gamma'_{j,l}$, $ 1\leq l\leq g_j$, $j\in J$, generate an isotropic subspace of $H_1(S,\R)$ of dimension $g$, meaning a Lagrangian subspace. \end{proof}

\begin{lemma}  \label{l:homoclinic} Consider $f\in \cG^r_{\omega}(S)$ isotopic to the identity and an identity isotopy $I$ of $f$. We suppose that $f$ has homoclinic classes. Suppose that $\kappa\in\mathcal E(f)$ contains a point $z\in \mathrm{fix}_{I,\mathrm{h^+}}(f)$ and that $\iota_*(H_1(\kappa,\R))\not=\{0\}$.  If $\tilde z\in \tilde S$ is a lift of $z$ and $\Lambda_s$, $\Lambda_u$ are branches of $\tilde z$, respectively stable and unstable, there exists $T\in G$ with $[T]\not=0$, such that  $\Lambda_u$ intersects transversally $T(\Lambda_s)$.
\end{lemma}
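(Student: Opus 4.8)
The plan is to reformulate the conclusion as a statement about the homology classes realized by homoclinic transitions, and then to identify the resulting subspace with $\iota_*(H_1(\kappa,\R))$. Since $z\in\mathrm{fix}_{I,\mathrm{h^+}}(f)$ has no reflection, the four branches at $z$ are $f$-invariant, so the branches $\Lambda_s,\Lambda_u$ through the lift $\tilde z$ are invariant by $\tilde f_I$. Because $f$ has homoclinic classes and $z\sim z$, Proposition \ref{p.localLefschetz} guarantees that the projected branches $\tilde\pi(\Lambda_u)$ and $\tilde\pi(\Lambda_s)$ meet, transversally by (G2). Hence the set $\cT=\{T\in G:\ \Lambda_u\text{ meets }T(\Lambda_s)\text{ transversally}\}$ is nonempty, and by the inclination lemma (as in the remark following Proposition \ref{p.precisekeyresult}, using that $\tilde f_I$ commutes with $G$) it is stable under composition. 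The lemma asserts exactly that $\cT$ contains an element $T$ with $[T]\neq 0$, so it suffices to establish the identity $\mathrm{span}_\R\{[T]:T\in\cT\}=\iota_*(H_1(\kappa,\R))$ and then invoke the hypothesis $\iota_*(H_1(\kappa,\R))\neq\{0\}$.

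To obtain this identity I would pass to the lifted homoclinic class. Let $K=K(\kappa)$ be the common closure of the branches of $\kappa$ (Theorem \ref{t.genericconsequences2}), let $\tilde K_0$ be the connected component of $\tilde\pi^{-1}(K)$ containing $\tilde z$, and set $H_0=\{T\in G:\ T(\tilde K_0)=\tilde K_0\}$. If $\Lambda_u$ meets $T(\Lambda_s)$ then $\tilde K_0$ and $T(\tilde K_0)$ share a point and therefore coincide, so $\cT\subseteq H_0$. The first step is to identify $\iota_*(H_1(\kappa,\R))$ with $\mathrm{span}_\R\{[T]:T\in H_0\}$. For any regular neighborhood $V$ of $\kappa$, writing $\hat V$ for the component of $\tilde\pi^{-1}(V)$ containing $\tilde z$, one has $\iota_*(H_1(V,\R))=\mathrm{span}_\R\{[T]:T\in\mathrm{Stab}(\hat V)\}$, and $H_0\subseteq\mathrm{Stab}(\hat V)$ because for $T\in H_0$ the set $T(\hat V)$ is again the component through $\tilde z$. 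Letting $V$ run over the minimal regular neighborhoods furnished by Proposition \ref{p.minimal}, for which $\iota_*(H_1(V,\R))=\iota_*(H_1(\kappa,\R))$ and $\mathrm{Stab}(\hat V)$ is as tight as possible around $\tilde K_0$, yields the reverse inclusion and hence $\iota_*(H_1(\kappa,\R))=\mathrm{span}_\R\{[T]:T\in H_0\}$.

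The decisive and hardest step is to show that $H_0$ is generated, as a group, by $\cT\cup\cT^{-1}$, so that $\mathrm{span}_\R\{[T]:T\in H_0\}=\mathrm{span}_\R\{[T]:T\in\cT\}$. Given $T\in H_0$, the points $\tilde z$ and $T(\tilde z)$ lie in the same component $\tilde K_0$; using that all points of $\kappa$ are mutually homoclinically related (Proposition \ref{p.localLefschetz}) and that, by the inclination lemma, every branch of $\kappa$ accumulates on $\Lambda_u$ and $\Lambda_s$, one expresses a path in $\tilde K_0$ from $\tilde z$ to $T(\tilde z)$ as a finite concatenation of pieces of lifted branches, each passage from one lifted branch to the next occurring at a lift of a transverse homoclinic point and recording an element of $\cT\cup\cT^{-1}$ whose product is $T$. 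Making this routing rigorous—that every element of the stabilizer of the lifted class is a product of transition elements—is where the main difficulty lies, and is precisely where the transversality condition (G2) and the inclination lemma must be used with care. Once the generation statement is in hand, combining it with the previous step gives $\mathrm{span}_\R\{[T]:T\in\cT\}=\iota_*(H_1(\kappa,\R))\neq\{0\}$, so some $T\in\cT$ has $[T]\neq 0$, which is the assertion of the lemma.
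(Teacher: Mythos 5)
There is a genuine gap — in fact two, and you flag one of them yourself. Your strategy is to reduce the lemma to the identity $\mathrm{span}_\R\{[T]:T\in\cT\}=\iota_*(H_1(\kappa,\R))$, where $\cT$ is the set of transition elements; that identity is indeed true (it is essentially the content of Proposition \ref{p.precisekeyresult}), but your two steps toward it are asserted rather than proved. First, for the equality $\mathrm{span}_\R\{[T]:T\in H_0\}=\iota_*(H_1(\kappa,\R))$, the inclusion $\subseteq$ already needs $K(\kappa)\subset V$ (Proposition \ref{p.regulardomain-class} as quoted only puts the \emph{branches}, not their closure, inside $V$), and the reverse inclusion is justified only by the phrase ``$\mathrm{Stab}(\hat V)$ is as tight as possible around $\tilde K_0$'', which is not an argument: minimality of $V$ is a statement about $\iota_*(H_1(V,\R))$ among periodic regular domains, and nothing in the quoted results forces the stabilizer of $\hat V$, even for a minimal $V$, to have the same homological span as $H_0$. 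Second, and decisively, the generation of $H_0$ by $\cT\cup\cT^{-1}$ is exactly the hard part, and it is doubtful as stated, not merely unproven: $\tilde K_0$ is a connected component of the preimage of a \emph{closure}, so two lifted branches can lie in the same component because their closures share accumulation points, without any finite chain of transverse homoclinic intersections joining them. Connectivity of $\tilde K_0$ therefore provides no ``routing'' of a path from $\tilde z$ to $T(\tilde z)$ through transition points, and a priori the group generated by $\cT$ could be a proper subgroup of $H_0$ with strictly smaller span. Neither the inclination lemma nor (G2) repairs this, because the obstruction is set-topological, not dynamical.

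The paper's proof avoids any analysis of the internal topology of $K(\kappa)$ and of stabilizers of lifted classes. It takes a minimal regular neighborhood $V$ of $\kappa$, a class $\alpha\in H_1(V,\R)$ with $i_*(\alpha)\neq 0$, and a simple loop $\Gamma$ with $i_*(\alpha)\wedge[\Gamma]\neq 0$, then passes to the $2$-sheeted cover $S'$ obtained by cutting and regluing along $\Gamma$, with deck involution $\tau$ and lifted data $f'$, $I'$. The key dichotomy is: either the two lifts $z'_1,z'_2$ of $z$ are equivalent for $f'$, or not. If not, Propositions \ref{p.separationconsequences} and \ref{p.minimal} produce disjoint, $\tau$-symmetric regular neighborhoods $W'_1,W'_2$ of their classes, which project onto a single regular neighborhood $W$ of $\kappa$ in $S$ whose preimage $\pi'^{-1}(W)$ is disconnected; minimality of $V$ then yields a loop in $W$ meeting $\Gamma$ with nonzero algebraic intersection, contradicting that disconnectedness. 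Hence $z'_1\sim z'_2$, and since $f'$ inherits homoclinic classes, the lifted branches of $z'_1$ and $z'_2$ corresponding to $\Lambda_u$ and $\Lambda_s$ meet transversally; lifting the connecting path to $\tilde S$ produces $T\in G$ with $\Lambda_u$ transverse to $T(\Lambda_s)$ and $[T]\wedge[\Gamma]$ odd, whence $[T]\neq 0$. If you want to salvage your approach, you would in effect have to reprove this covering-space dichotomy; note that even the stronger span identity you aim at is obtained in the paper (proof of Proposition \ref{p.precisekeyresult}) by iterating this same covering argument with loops $\Gamma$ chosen dual to primitive classes, never by analyzing $K(\kappa)$ or $H_0$ directly.
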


\begin{figure}[h]
\centering
  \def\svgwidth{\columnwidth}
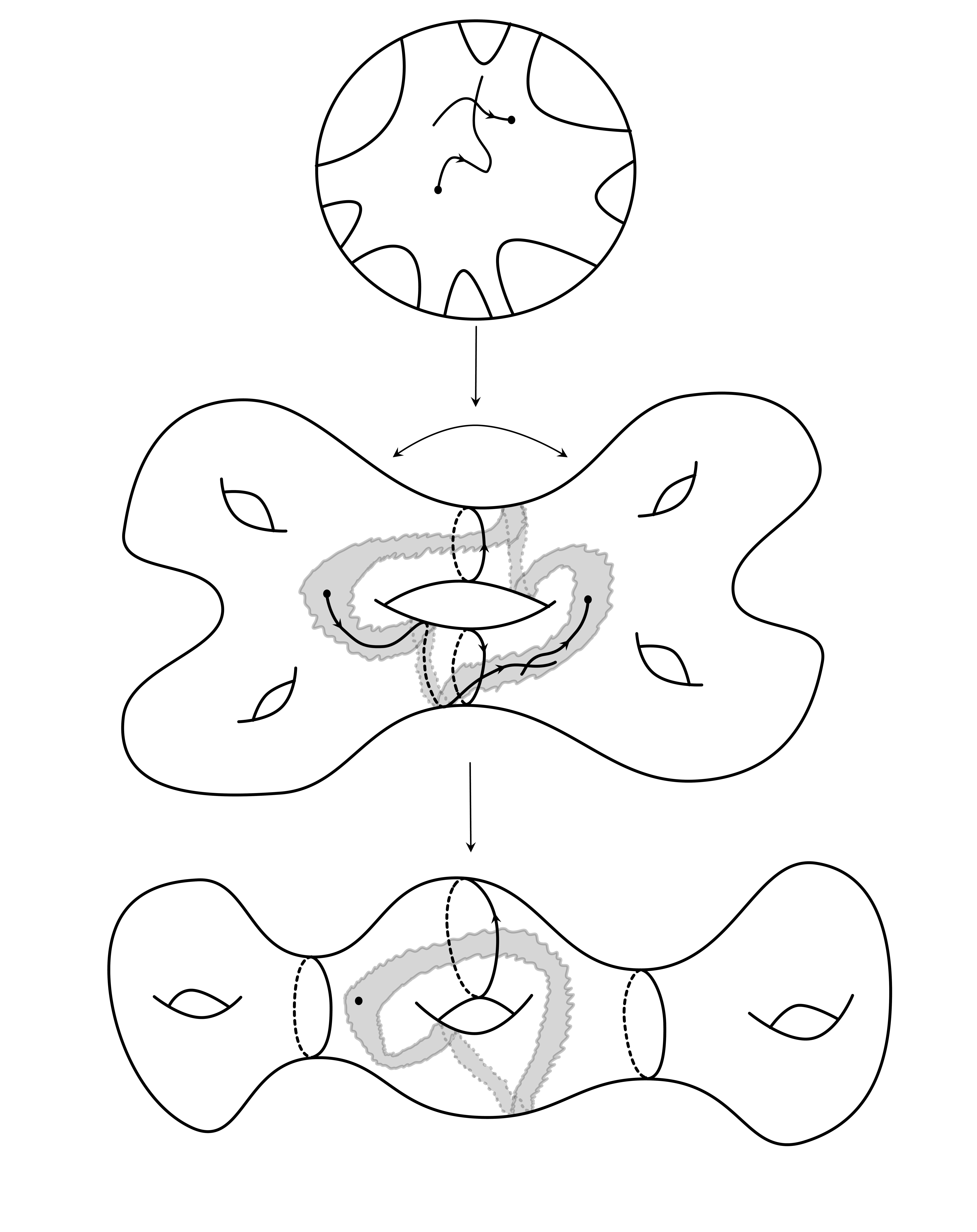
\caption{}
\label{f.homoclinic}
\end{figure}

\begin{proof}
Let $V$ be a minimal regular neighborhood of $\kappa$. It is invariant by $f$ because $f(\kappa)=\kappa$. By assumption, $\iota_*(H_1(V,\R))\not=\{0\}$ and so there exists $\alpha\in H_1(V,\R)$ such that $i_*(\alpha)\not=0$. We can find  a base of $H_1(S,\R)$ that consists of homological classes of simple loops. It implies that there exists a simple loop $\Gamma\subset S$ such that $i_*(\alpha)\wedge[\Gamma]\not=0$.
We denote by $S'$ the $2$-sheet covering space of $S$ obtained by cutting and gluing two copies of $S$ along $\Gamma$. We denote $\pi': S'\to S$ the covering projection and $\tau: S'\to S'$ the non trivial covering automorphism. There is a lift $f'$ of $f$ to $S'$ such that $I$ can be lifted to an identity isotopy $I'$ of $f'$. Moreover it holds that $f'\in \cG^r_{\pi'{}^*(\omega)}(S)$. The lifts $z'_1$ and $z'_2$ of $z$ belong to $ \mathrm{fix}_{I',\mathrm{h^+}}(f')$ and we have $\tau(z'_1)=z'_2$, $\tau(z'_2)=z'_1$. See Figure \ref{f.homoclinic}. We will prove by contradiction that $z'_1$ and $z'_2$ are equivalent, meaning that the class $\kappa'_1\in{\mathcal E}(f')$ of $z'_1$ and the class $\kappa'_2\in{\mathcal E}(f')$ of $z'_2$ coincide
. Of course it holds that $\tau(\kappa'_1)=\kappa'_2$ and $\tau(\kappa'_2)=\kappa'_1$. Suppose that $\kappa'_1\not=\kappa'_2$. Then, by Proposition \ref{p.separationconsequences} and Proposition \ref{p.minimal}, one can find $V'_1$, $V'_2$ regular neighborhoods of $\kappa'_1$, $\kappa'_2$ respectively, such that $V'_1\cap V'_2=\emptyset$.  Denote $W'_1$ the connected component of $V'_1\cap \tau(V'_2)$ that contains $\kappa'_1$ and $W'_2$ the connected component of $V'_2\cap \tau(V'_1)$ that contains $\kappa_2$. The domains $W'_1$ and $W'_2$ are disjoint regular neighborhoods of $\kappa'_1$, $\kappa'_2$ respectively and it holds that $\tau(W'_1)=W'_2$, $\tau(W'_2)=W'_1$.  By definition of a regular domain, both sets $W'_1$ and $W'_2$ are invariant by $f'$. So, they project onto the same open set $W\subset S$, which contains $\kappa$ and is invariant by $f$. More precisely the map $\pi'$ induces a homeomorphism from $W'_1$ to $W$ and a homeomorphism from $W'_2$ to $W$. In fact, the domains $W'_1$ and $W'_2$ are the connected component of $\pi'{}^{-1}(W)$. The set $W$ has finite type because it is the case of $W'_1$ and $W'_2$. Moreover, its complement in $S$ has no isolated point because the complements of $W'_1$ and $W'_2$ in $S'$ have no isolated point. Consequently, $W$ is a regular neighborhood of $\kappa$. So, there exists $\alpha'\in H_1(W,\R)$ such that $i_*(\alpha')=i_*(\alpha)$. We have $i_*(\alpha)\wedge[\Gamma]\not=0$. It implies that there is a loop $\Gamma'\in W$ such that $[\Gamma']\wedge [\Gamma]\not =0$. We deduce that $\pi'^{-1}( \Gamma')$ is a loop. This contradicts the fact that $\pi'^{-1}(W)$ is not connected.

The map $f'$ has homoclinic classes because it is the case for $f$.  Switching $z'_1$ and $z'_2$ if necessary, one can suppose that $\tilde z$ is a lift of $z'_1$. The unstable branch $\Lambda'_{u,1}$ of $z'_1$ that lifts $\Lambda_u$ and the stable branch  $\Lambda'_{s,2}$ of $z'_2$ that lifts $\Lambda_s$ intersect transversally at a point $z'_3$. Consider the path $\lambda'=\lambda'_1\lambda'_2$, where $\lambda'_1$ is the segment of $\Lambda'_{u,1}$ that joins $z'_1$ to $z'_3$ and $\lambda'_2$ is the segment of $\Lambda'_{s,2}$ that joins $z'_3$ to $z'_2$. One can lift $\lambda'_1$ to a path $\tilde \lambda_1$ joining $\widetilde z$ to a lift $\tilde z_3$ of $z'_3$. Moreover, there exists $T\in G$ such that $\lambda'_2$ can be lifted to a path $\tilde \lambda_2$ joining $\widetilde z_3$ to $T(\tilde z)$. The unstable branch of $\tilde z_1$ that lifts $\Lambda_u$  intersects transversally  at $\tilde z_3$ the stable  branch of $T(\tilde z)$ that lifts $\Lambda_s$. Observe that, by Proposition \ref{p.regulardomain-class}, $T$ is in the stabilizer of $\pi^{-1}(V).$  A path  joining $\tilde z$ to $T(\tilde z)$ projects in $S'$ to a path joining $z'_1$ to $z'_2$. So, $[T]\wedge [\Gamma]$ is an odd integer, which implies that $[T]$ does not vanish. \end{proof}

\textbf{Proof of Proposition \ref{p.keyresult}:} Based on the previous lemmas the proof of Theorem  is now very easy. Suppose that $f\in \cG^r_{\omega}(S)$ is isotopic to the identity and that 

\begin{itemize}
\item either $g\geq 2$ and $f$ has more than $2g-2$ periodic points;

\item or $g=1$ and $\mathrm{fix}_{I}(f)\not=\emptyset$.

\end{itemize}
In both cases $\mathrm{fix}_{I,\mathrm{h^+}}(f)$ is not empty. One can define the family $(\kappa_j)_{j\in J}$ of classes that contain at least one 
point of $\mathrm{fix}_{I,\mathrm{h^+}}(f)$. By Proposition \ref{p.separationconsequences} and Proposition \ref{p.minimal}, one can find a family of disjoints subsets $(V_j)_{j\in I} $ of $S$, where $V_j$ is a minimal regular neighborhood of $\kappa_j$. Of course every $V_j$ is fixed and every connected component of $\tilde \pi^{-1}(V_j)$ is fixed by $\tilde f_I$.  One can apply Lemma \ref{l:nonzerohomology}: there exists $j\in J$ such that $\iota_*(H_1(V_j,\R))\not=\{0\}$. Moreover, by Theorem \ref{t.numberhyp}, $f$ has homoclinic classes, and so, applying  Lemma \ref{l:homoclinic}, we deduce that for every $z\in \mathrm{fix}_{I,\mathrm{h^+}}(f)\cap \kappa_j$ for every lift $\tilde z\in \tilde S$ of $z$, for every branches $\tilde\Lambda_s$, $\tilde \Lambda_u$ of $\tilde z$, respectively stable and unstable, there exists $T\in G$ satisfying $[T]\not=0$, such $\tilde \Lambda_u$ and $T( \tilde \Lambda_s)$ intersect transversally.
\vskip 5pt

\textbf{Proof of Proposition \ref{p.precisekeyresult}:} The proof of this proposition follows the same lines as above. Indeed, it will be an inductive argument, the first step being Proposition \ref{p.keyresult} (and its proof). As before, one can find a homoclinic class $\ka$ associated to a fixed point $z\in \mathrm{fix}_{I,\mathrm{h^+}}(f)$ and a minimal regular domain $V$ of $\ka$ such that $\iota_*(H_1(V,\R))\not=\{0\}.$ We have that $V$ is fixed and every connected component of $\pi^{-1}(V)$ is fixed by $\tilde f_I.$

Assume that there exist $T_1,..., T_q\in G$  in the stabilizer of $\pi^{-1}(V)$ such that, for a lift $\tilde z$ of $z$, and for branches $\tilde\Lambda_s$, $\tilde\Lambda_u$ of $\tilde z$, respectively stable and unstable, we have
\begin{itemize}
\item $\tilde\Lambda_u$  intersects transversally $T_i(\tilde\Lambda_s)$ for $1\le i\le q;$
\item the classes $[T_i]\subset \iota_*(H_1(V,\R))$ are linearly independent.
\end{itemize}

Proposition \ref{p.keyresult} says that the above is true for $q=1.$ Assume that $\{[T_i];1\le i\le q\}$ do not generate $\iota_*(H_1(V,\R)).$ We would like to find $T_{q+1}$ in the stabilizer of $\pi^{-1}(V)$ such that $\tilde \Lambda_u$  intersects transversally $T_{q+1}(\tilde\Lambda_s)$ and that $\{[T_i];1\le i\le q+1\}$ is linearly independent.  If so, the proof of Proposition \ref{p.precisekeyresult} is done.

Assume that $\{[T_i];1\le i\le q\}$ do not generate $\iota_*(H_1(V,\R))$.  Let $H$ be the subspace of $H_1(S,\R)$ generated by $\{[T_i];1\le i\le q\}$. We have that $(H^\perp)^\perp=H$ has dimension $q$ (where $\perp$ is with respect to the wedge product $\wedge$). Then, there exists   $\alpha\in H_1(V,\R)$ such that $i_*(\alpha)\not\in (H^\perp)^\perp. $ In particular, there exists $\beta\in H^\perp$ such that $\alpha \wedge \beta\neq 0.$ Since $[T_i]$ belongs to $H_1(S, \Z)$ we may consider that $\beta $ is in $H_1(S,\Z)$ as well. Moreover, we may assume that $\beta$ is primitive\footnote{An element of $h\in H_1(S,\Z)$ is called \textit{primitive} if it can not be written as $h=nh$ with $h'\in H_1(S,\Z)$ and $n\ge 2.$}. Primitive elements can be represented by simple loops  (see \cite{Pu} or \cite{Me}). Therefore, there is a simple loop $\Gamma$ such that $\beta=[\Gamma].$  Observe that $i_*(\alpha)\wedge [\Gamma]\neq 0$ and $ [T_i]\wedge [\Gamma]=0$ if $1\le i\le q.$

Now, the arguments in Lemma \ref{l:homoclinic} lead us to the existence of $T_{q+1}$ in the stabilizer of $\pi^{-1}(V)$ such that 
\begin{itemize}
\item $\tilde \Lambda_u$  intersects transversally $T_{q+1}(\tilde\Lambda_s)$
\item $[T_{q+1}]\in \iota_*(H_1(V,\R))$
\item $[T_{q+1}]\wedge [\Gamma]\neq 0.$
\end{itemize}
The last item implies that $\{[T_i]:1\le i\le q+1\}$ is linearly independent. Thus, by  induction, we complete the proof of Proposition \ref{p.precisekeyresult}.

\section{A criteria of existence of periodic orbits with non trivial rotation vector}\label{s.poincarebirkhofflike}

In this section, we give an alternate way to prove the existence of non contractible periodic orbits in certain special situations. We set it in the case where $g\geq 2$ but one can do similar things in the case of a torus   We will prove the following.

\begin{proposition} Consider $f\in \cG^r_{\omega}(S)$, isotopic to the identity, where $g\geq 2$. Let $V$ be an invariant regular domain such that the connected components of $\tilde\pi^{-1}(V)$ are fixed by the canonical lift $\tilde f$. Let $\Sigma$ be a finite approximation of $V$ and $\Gamma$ a boundary circle of $\Sigma$. We suppose that $\Gamma$ is not null homotopic. Then there exist a real number $a>0$ such that for every $p/q\in(0,a)$, written in an irreducible way, there exists a non contractible periodic orbit $O_{p/q}$ of period $q$, these orbits being all distinct. Moreover, there exists an orientation of $\Gamma$ such that for every $p/q\in(0,a)$, we have $\mathrm{rot}_f(O_{p/q})=(p/q)[\Gamma]$, where $[\Gamma]\in H_1(S,\Z)$ is the homology class of the oriented loop $\Gamma$.

\end{proposition}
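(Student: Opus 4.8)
The plan is to realize this as an application of the classical Poincaré–Birkhoff theorem on the prime end compactification $\widehat V$. The hypothesis gives us a regular domain $V$ whose preimage components are fixed by the canonical lift $\tilde f$, together with a non-separating boundary circle $\Gamma$ of a finite approximation $\Sigma$, corresponding to an end $e(\Gamma)$ of $V$. I would first pass to the prime end compactification $\widehat V$, on which $f_{|V}$ extends to a homeomorphism $\widehat f$. Since the components of $\tilde\pi^{-1}(V)$ are fixed by $\tilde f$, the restriction $f_{|V}$ is isotopic to the identity through an isotopy lifting to $\tilde f$, so $\widehat f$ is isotopic to the identity and we have a well-defined lift $\widehat{\tilde f}$ to the universal cover (or to the relevant annular cover near the prime-end circle $C$ associated to $e(\Gamma)$).

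The key structural input is Theorem \ref{t.pe-genericconsequences}: because $f\in\cG^r_\omega(S)$, the prime end rotation number $\mathrm{rot}(C)$ of the added circle $C$ is irrational, in particular nonzero. This is exactly what supplies one ``end'' of the rotation interval needed for Poincaré–Birkhoff. The other end comes from the interior: the idea is to exhibit points whose rotation number (measured via the homology pairing with $[\Gamma]$, or equivalently in the annular cover dual to $\Gamma$) is arbitrarily close to $0$ from the appropriate side, so that the interval of rotation numbers realized contains a whole one-sided neighborhood $(0,a)$ of $0$. Concretely, I would work in the infinite cyclic cover $V_\Gamma\to V$ associated to the kernel of the map $\pi_1(V)\to\Z$ given by intersection with $[\Gamma]$; the deck transformation is the covering automorphism $T$ with $[T]=[\Gamma]$. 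The lift $\tilde f$ descends to a lift $f_\Gamma$ on $V_\Gamma$ commuting with $T$, and the rotation number of a $T$-periodic orbit $\tilde z_{p/q}$ satisfying $f_\Gamma^q(\tilde z)=T^p(\tilde z)$ is exactly $(p/q)[\Gamma]$ after projecting.

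For each irreducible $p/q\in(0,a)$ I would apply Poincaré–Birkhoff in the annulus obtained from a suitable sub-annulus of $V_\Gamma$ compactified at the prime-end circle $C$ on one side: the boundary circle $C$ rotates with the fixed irrational (hence positive, after choosing the orientation of $\Gamma$ correctly) rate, while one can produce a curve near the ``interior'' whose displacement under $f_\Gamma^q/T^p$ has the opposite sign for $p/q$ small. The twist condition then forces a fixed point of $f_\Gamma^q\circ T^{-p}$, yielding $\tilde z_{p/q}$ with $f_\Gamma^q(\tilde z_{p/q})=T^p(\tilde z_{p/q})$. As in the remark following Proposition \ref{p.rotationhorseshoe}, since $p,q$ are coprime this point projects to a genuine period-$q$ periodic orbit $O_{p/q}$ of $f$ with $\mathrm{rot}_f(O_{p/q})=(p/q)[\Gamma]$, and distinct $p/q$ give distinct orbits because they have distinct rotation vectors. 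The orientation of $\Gamma$ is fixed once and for all by the sign of $\mathrm{rot}(C)$ so that the realized interval lies on the positive side.

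The main obstacle I anticipate is not the abstract invocation of Poincaré–Birkhoff but setting up the correct boundary conditions: one must genuinely verify a \emph{twist} (opposite displacement signs on the two boundaries of an annular region) uniformly for all small $p/q$, and to do this cleanly one has to control the rotation behavior on the interior side. The irrationality of $\mathrm{rot}(C)$ from Theorem \ref{t.pe-genericconsequences} handles the prime-end boundary, but matching it against an interior curve of nearly-zero rotation — and checking that the prime-end compactification produces an honest invariant annulus with boundary rotation numbers straddling every small $p/q$ — is the delicate bookkeeping step. A secondary subtlety is confirming that the fixed points produced by Poincaré–Birkhoff lie genuinely in the interior $V$ (not on the added prime-end circles, where by Theorem \ref{t.pe-genericconsequences} there are no periodic points anyway, which actually helps), and that the projection to $S$ respects the homology computation $\mathrm{rot}_f(O_{p/q})=(p/q)[\Gamma]$ through the identification $[T]=[\Gamma]$.
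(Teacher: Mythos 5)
Your overall strategy --- prime end compactification, the irrationality of the prime end rotation number from Theorem \ref{t.pe-genericconsequences}, a covering space whose deck transformation $T$ realizes $[\Gamma]$, and Poincar\'e--Birkhoff to produce points with $\tilde f^q(\tilde z)=T^p(\tilde z)$ --- is indeed the skeleton of the paper's proof. But there are two genuine gaps, one of which is the heart of the matter. First, your covering space is set up inconsistently: the infinite cyclic cover of $V$ associated to the kernel of the intersection pairing with $[\Gamma]$ has deck group generated by classes that \emph{cross} $\Gamma$; since $[\Gamma]\wedge[\Gamma]=0$, the loop $\Gamma$ itself lies in that kernel, so it lifts to closed loops and there is no deck transformation $T$ of that cover with $[T]=[\Gamma]$. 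Moreover the end $e(\Gamma)$ lifts to infinitely many ends of that cover, permuted by the deck group, so there is no single invariant prime-end circle to compactify with. The correct object is the annular cover $\hat S=\tilde S/T$ of the \emph{whole surface} $S$, where $T\in G$ is a generator of the stabilizer of a lift $\tilde\Gamma$; the annular neighborhood $U$ of $e(\Gamma)$ then lifts homeomorphically to $\hat U=\tilde U/T\subset\hat S$.

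Second, and decisively: the boundary with rotation number zero. You write that one must ``produce a curve near the interior whose displacement has the opposite sign'' and defer this as delicate bookkeeping, but nothing inside $V$ (or any cover of $V$) supplies such a curve --- the dynamics in the interior of $V$ is completely unconstrained, and the rotation interval seen from within $V$ near the end could a priori be anything. The paper's solution is to leave $V$ entirely: since $g\geq 2$, one identifies $\tilde S$ with $\D$ and uses the classical fact that the canonical lift $\tilde f$ extends to $\overline\D$ fixing every point of $\S^1$. The invariant annulus $\hat A$ is taken to be $\hat U$ together with the closure of the component of $\hat S\setminus\hat\Gamma$ on the other side of $\hat\Gamma$; that side is compactified by the arc $J_l/T$ of the circle at infinity, on which $\hat f$ extends as the \emph{identity}, i.e.\ with rotation number exactly $0$. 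The twist between $0$ and the irrational prime-end rotation number $a$ is thus obtained for free, whereas your version has no source for it. A related omission: Poincar\'e--Birkhoff requires area preservation or an intersection property, and $\hat A$ has infinite $\hat\omega$-area; the paper gets the intersection property because $\mu_{\hat\omega}(\hat U)=\mu_\omega(U)<+\infty$, so the measure is locally finite near one end, forcing every essential simple loop to meet its image. Your invocation of ``the twist condition forces a fixed point'' skips this hypothesis, without which twist maps of the open annulus can be fixed-point free.
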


\begin{proof}  Let $\tilde \Gamma$ be a lift of $\Gamma$ to $\tilde S$ and $T$  one of the two generators of the stabilizer of $\Gamma$ in $G$. The canonical lift $\tilde f$ commutes with $T$ and lifts a diffeomorphism $\hat f$ of the annular covering space $\hat S=\tilde S/T$ of $S$. One can identify $\tilde S$ with the unit disk $\D$ and $G$ with a group of hyperbolic M\"obius automorphisms of $\D$.  It is well known that $\tilde f$ extends naturally to a homeomorphism of $\overline \D$ that fixes every point of $\S^1$. The topological line $\tilde\Gamma$ joins a point $\alpha(\tilde \Gamma)\in \S^1$
to a point  $\omega(\tilde \Gamma)\in \S^1$. Denote $J_r$ the connected component of $\S^1\setminus (\alpha(\tilde \Gamma)\cup \omega(\tilde \Gamma))$ that joins $\alpha(\tilde \Gamma)$ to $\omega(\tilde \Gamma)$ for the usual orientation and $J_l$ the connected component of $\S^1\setminus (\alpha(\tilde \Gamma)\cup \omega(\tilde \Gamma))$ that joins $\omega(\tilde \Gamma)$ to $\alpha (\tilde \Gamma)$. 
The map $\hat f$ extends to the compact annulus $\left(\overline D\setminus (\alpha(\tilde \Gamma)\cup \omega(\tilde \Gamma)\right)/T$ that fixes every point of the two boundary loops $J_r/T$ and $J_l/T$.   

Denote $U$ the connected component of $V\setminus\Sigma$ that is bordered by $\Gamma$ and recall that it is an annular punctured neighborhood of the end $e(\Gamma)$. 
Denote $\tilde U$ the connected component of $\tilde\pi^{-1}(U)$ that is bordered by $\tilde \Gamma$. Finally set $\hat U=\tilde U/T$. Denote $\hat \pi: \hat S\to S$ the covering projection and  $\hat \omega =\hat \pi^*(\omega)$.  Note that $\hat f$ preserves $\hat\omega$, that  $\hat f_{\vert \hat U}$ is a homeomorphism from $\hat U$ to $U$ and that $\mu_{\hat\omega}(\hat U)=\mu_{\omega}(U)$. There is no loss of generality by supposing that $\hat U$ is on the right of $\hat\Gamma$. One gets an invariant annulus $\hat A$ by taking the union of $\hat U$ and of the closure of the connected component of $\hat S\setminus\hat \Gamma$  located on the left of $\hat\Gamma$. Its universal covering space $\tilde A$ is the union of $\tilde U$ and of  the closure of the connected component of $\D\setminus\tilde\Gamma$  located on the left of $\tilde \Gamma$. One of the ends of $\hat A$ can be blown up by adding $J_l/T$. The other end can be blown up by adding the circle of prime ends at $e(\Gamma)$ via the map $\hat \pi$. The homeomorphism $\hat f_{\vert \hat A}$ can be extended by the identity on  $J_l/T$ and by a homeomorphism of irrational number on the circle of prime ends at $e(\Gamma)$. The universal covering space of $\hat A$ is a subset of $\D$ and one can add to $\hat A$ the line $J_l$ and the universal covering space of the circle of prime ends at $e(\Gamma)$. There is a lift that fixes every point of $J_l$ and coincides with $\tilde f$. The measure $\mu_{\hat \omega}$ is infinite, but is locally finite in a neighborhood of one of the ends of $\hat A$ because  $\mu_{\hat\omega}(\hat U)<+\infty$. Consequently, $\hat f$ satisfies the following intersection property: every simple loop of $\hat A$ non homotopic to zero meets its image by $\hat f$.  Replacing $T$ by its inverse if necessary, one can suppose that the rotation  number $a$ induced on the the universal covering space of the circle of prime ends at $e(\Gamma)$ is a positive irrational number. So, one can apply the classical Poincar\'e-Birkhoff theorem: for every $p/q\in(0,a)$, there exists $\tilde z_{p/q}\in \tilde D$ such that $\tilde f^q(z_{p/q})=T^{p}(\tilde z)$. It is easy to prove that the orbit of $\tilde z_{p/q}$ projects onto a periodic orbit $O_{p/q}$ of $f$ of period $q$ (because $p$ and $q$ are relatively prime) and that the orbits $O_{p/q}$ are all distinct.

\end{proof}

\end{document}